       \theoremstyle{plain}
\newtheorem{thm}{Theorem}[section]
\newtheorem{lemma}[thm]{Lemma}
\newtheorem{corollary}[thm]{Corollary}
\newtheorem{example}[thm]{Example}
 \newcommand{\be}{\begin{equation}} 
\newcommand{\ee}{\end{equation}}
\newcommand{\RSCvx}{\operatorname{\mathbf{SCvx}}}
\newcommand{\DN}{\Delta_{\Nat}}
\newcommand{\Om}{\mathbf{\Omega}}
\newcommand{\Ai}{\operatorname{A \! \downarrow \! \iota}}
\newcommand{\M}{\operatorname{\mathbf{Std}}}
\newcommand{\Std}{\operatorname{\mathbf{Std}}}
\newcommand{\Set}{\mathbf{Set}}
\newcommand{\A}{\mathcal{A}}
\newcommand{\D}{\mathcal{D}}
\newcommand{\E}{\mathcal{E}}
\newcommand{\F}{\mathcal{F}}
\newcommand{\G}{\mathcal{G}}
\newcommand{\bN}{\mathbb{N}}
\newcommand{\Nat}{\mathds{N}} 
\newcommand{\cN}{\mathbbmss{N}}
\newcommand{\twos}{\mathbf{2}}
\newcommand{\two}{\mathbbmss{2}}
\newcommand{\one}{\mathbf{1}}
\newcommand{\Rinf}{\mathbb{R}_{\infty}}
\newcommand{\SCvx}{\operatorname{\mathbf{SCvx}}}
\newcommand{\T}{\mathcal{P}}
\newcommand{\mS}{\mathbf{\Sigma}}
\def\@normalsize{\@setsize\normalsize{14.5pt}\xiipt\@xiipt
\abovedisplayskip 12\p@ plus3\p@ minus7\p@
\belowdisplayskip \abovedisplayskip
\abovedisplayshortskip  \z@ plus3\p@
\belowdisplayshortskip  6.5\p@ plus3.5\p@ minus3\p@
\let\@listi\@listI}
\title{Giry algebras for standard measurable spaces}
\author{Kirk Sturtz}
\address{Nong Han, Thailand}
 \email{kirksturtz@yandex.com}   
\begin{document}

\vspace{-20cm}

\begin{abstract}
The notion of ``super convex spaces'' generalizes the idea of convex spaces by replacing finite affine sums with countable affine sums.  Using this notion permits a very elegant approach for analysis of the Giry monad on standard measurable spaces and identifying the $\G$-algebras for that monad.  We use Isbell duality and restrict the adjunction $\mathbf{Spec} \dashv \mathcal{O}$ to a proper subcategory of super convex spaces and separated standard measurable spaces.

\smallskip
\noindent \textbf{Keywords.} Giry algebras, Giry monad, standard measurable spaces, super convex spaces.

\end{abstract}
 \maketitle
 \vspace{-.3in}
 
 \tableofcontents
 \addtocontents{toc}{ \vskip-60pt}
 
 \thispagestyle{empty}
 
  \section{Introduction}
  The category of convex spaces is well known and convex spaces are used throughout the literature.  Yet the notion of extending the characteristic property of finite affine sums to countably affine sums has not been used outside of a few category theory articles.  But the category of super convex spaces, $\SCvx$, provides an ideal setting for modeling probability monads because that category can be modeled as a functor category which is a subcategory of the category of presheaves $\Set^{\DN^{op}}$, where $\DN$ is the one object category (monoid) consisting of  the countably infinite-dimensional simplex and all the countably affine endomorphisms of that object which are easy to explicitly characterize because any countably affine map from $\DN$ to any other affine space is uniquely determined by where such a map sends the Dirac measures $\delta_i$.  Equivalently, the object $\DN$ can be viewed as consisting of all the elements $\mathbf{p} \in \G{\bN}$, where $(\G, \mu, \eta)$ is the Giry monad, and $\G{\bN}$ is the set of all probability measures on the set of natural numbers endowed with the discrete $\sigma$-algebra.  The full subcategory of $\SCvx$, consisting of the single object $\DN$, which is $\G{\bN}$ viewed as a super convex space,  is a dense subcategory of $\SCvx$, and hence the restricted Yoneda embedding permits us to use Isbell duality to relate the category $\SCvx \hookrightarrow \Set^{\Delta_{\Nat}^{op}}$  to its algebraic dual.

  We recall that for any small category $\Om$ Isbell duality gives an adjunction $\mathcal{O} \dashv \mathbf{Spec}$, 
\begin{figure}[H]
\begin{equation}  \nonumber
 \begin{tikzpicture}[baseline=(current bounding box.center)]

       \node  (St)  at  (3,0)    {$\mathcal{A}$};
       \node   (SC) at  (0,0)   {$\mathcal{X}$};
       \node   (Om)  at  (1.5, -1.5)  {$\Om$};
       \node   (SOm) at  (3,1.5)  {${(\Set^{\Om})}^{op}$};
       \node   (SOmop) at  (0, 1.5)  {$\Set^{\Om^{op}}$};
       \node    (c)  at  (6, 1.5)    {$\mathcal{O} \dashv \mathbf{Spec}$};
       \node     (c2) at   (8.3,.5)    {$\mathcal{O}(\F)[\omega] = \Set^{\Om^{op}}(\F,\Om(\cdot, \omega)) \quad \textrm{for all }\omega \in \Om$};
       \node    (c3)  at   (8.4, -.5)    {$\mathbf{Spec}(\G)[\omega] = \Set^{\Om}(\G,\Om(\omega, \cdot)) \quad \textrm{for all }\omega \in \Om$};
       
       \draw[->, above] ([yshift=2pt] SOmop.east) to node {$\mathcal{O}$} ([yshift=2pt] SOm.west);
       \draw[->,below]  ([yshift=-2pt] SOm.west) to node {$\mathbf{Spec}$} ([yshift=-2pt] SOmop.east);
       \draw[->,right] (St) to node {$\mathbf{y}^{op}$} (SOm);
       \draw[->,left] (SC) to node {$\mathbf{y}$} (SOmop);
       \draw[->,above] ([yshift=2pt] SC.east) to node {$\mathcal{G}$} ([yshift=2pt] St.west);
       \draw[->,below] ([yshift=-2pt] St.west) to node {$\F$} ([yshift=-2pt] SC.east);
       \draw[->,left] (Om) to node {$\iota$} (SC);
       \draw[->,right] (Om) to node {$j$} (St);
       
\end{tikzpicture}
\end{equation}
\caption{Isbell duality between the functor categories, $\Set^{\Om^{op}}$ (geometric spaces) and $\Set^{\Om}$ (algebraic spaces) where a canonical adjunction exists,  and its restriction to subcategories embedded into those functor categories.}
\label{Isbell}
\end{figure}
\noindent
and provided there is a dense functor $\iota: \Om \rightarrow \mathcal{X}$ and a codense functor $j: \Om \rightarrow \mathcal{A}$, then the two Yoneda embeddings are both full and faithful and we have the situation depicted in Diagram \ref{Isbell}.  

By choosing $\Om$ to be the full subcategory of $\SCvx$ consisting of the two objects $\cN$ and $\DN$, we have the situation in Diagram \ref{Isbell} with $\mathcal{X}$ a subcategory of $\SCvx$, and $\mathcal{A}$ a subcategory of  measurable spaces.  The super convex space $\cN$, which is the set of natural numbers with the only possible super convex space structure, is needed because the functor $j=\mS': \Om \rightarrow \Std_2$,  where $\Std_2$ is the category 
 of separated standard measurable spaces, is a codense functor defined by $\bN := (\Nat, \powerset{\Nat}) = \mS'(\cN)$ and $\mS'(\DN) = \G{\bN}$.    In other words, the functor $\mS'$ forgets the super convex space structure and endows the underlying sets with a $\sigma$-algebra structure allowing us to view those sets as measurable spaces.  

 The choice of $\mathcal{A} = \Std_2$ presents no difficulty to our main goal of characterizing the $\G$-algebras of the Giry monad because any measurable space $X$ which is not separated, implying there exists two points $x, y \in X$ such that $\delta_x=\delta_y$, cannot have any $\G$-algebra $h$ by the requirement that $h \circ \eta_X = \one_X$. 

 We can arrive naturally at the category $\Om$, as is carried out  in \S 2,  by recognizing the existence of the $\G$-algebra $\epsilon_{\bN}: \G{\bN} \rightarrow \bN$, which maps $\sum_{i \in \Nat} p_i \delta_i \mapsto min_i \{ i \, | \, p_i>0 \}$, and then noting that both the underlying sets of the two measurable spaces $\G{\bN}$ and $\bN$ have a super convex space structure, and we denote those two sets viewed as super convex spaces by $\DN$ and $\cN$, respectively.  
 
 \vspace{.1in}
 
To understand the big picture  let $\mS'': \Om \rightarrow \mathbf{Meas}$ denote the functor mapping the object $\cN \mapsto \bN$ and $\DN \mapsto \G{\bN}$.  We consider the functor $\mS''$ because $\mathbf{Meas}$ has all limits, and hence the right Kan extension $Ran_{\iota}(\mS'')$ exists and can be computed pointwise.  

 \begin{lemma}  The right Kan extension of the functor $\mS''$ along the inclusion function \mbox{$\iota: \Om \hookrightarrow \SCvx$},
\begin{equation} \nonumber
\begin{tikzpicture}
        \node  (Omega)  at  (0,0)  {$\Om$};
        \node  (SC)          at   (3,0)  {$\SCvx$};
        \node   (Std2)        at     (0, -1.5)  {$\mathbf{Meas}$};
        \draw[->,above] (Omega) to node {$\iota$} (SC);
        \draw[->,left] (Omega) to node {$\mS''$} (Std2);
        \draw[->,right,dashed] (SC) to node [yshift=-4pt]{$Ran_{\iota}(\mS'')$} (Std2);
\end{tikzpicture}
\end{equation}
has the universal arrow $\varepsilon:Ran_{\iota}(\mS'') \circ \iota \Rightarrow \mS''$ specified at component $\cN$ by $\varepsilon_{\cN} = \one_{\bN}$ and at component $\DN$ by $\varepsilon_{\DN} = \one_{\G{\bN}}$. (Hence $Ran_{\iota}(\mS'')$ really is an extension of $\mS''$.)
\end{lemma}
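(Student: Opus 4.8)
The plan is to apply the pointwise formula for a right Kan extension and to exploit that $\iota$ is a full subcategory inclusion. Since $\mathbf{Meas}$ is complete, for each super convex space $X$ the object $Ran_{\iota}(\mS'')(X)$ is a limit of the diagram
\[
(X \downarrow \iota) \xrightarrow{\;\pi\;} \Om \xrightarrow{\;\mS''\;} \mathbf{Meas},
\]
where $(X\downarrow\iota)$ is the comma category whose objects are pairs $(\omega,f)$ with $\omega\in\{\cN,\DN\}$ and $f\colon X\to\iota\omega$ an $\SCvx$-morphism, and $\pi(\omega,f)=\omega$. By the construction of the Kan extension, the component of the universal arrow at $\omega_0\in\Om$ is obtained by evaluating at $X=\iota\omega_0$ and taking $\varepsilon_{\omega_0}$ to be the leg of the limiting cone indexed by the object $(\omega_0,\one_{\iota\omega_0})\in(\iota\omega_0\downarrow\iota)$.

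First I would single out the object $(\omega_0,\one_{\iota\omega_0})$ of $(\iota\omega_0\downarrow\iota)$ and show it is \emph{initial}. A morphism from $(\omega_0,\one_{\iota\omega_0})$ to an arbitrary object $(\omega,f)$ is a morphism $g\colon\omega_0\to\omega$ of $\Om$ satisfying $\iota(g)\circ\one_{\iota\omega_0}=f$, i.e.\ $\iota(g)=f$. As $\Om$ is the \emph{full} subcategory of $\SCvx$ on the objects $\cN$ and $\DN$, the inclusion $\iota$ is full and faithful; fullness produces a $g$ with $\iota(g)=f$ and faithfulness makes it unique. Thus $(\omega_0,\one_{\iota\omega_0})$ admits a unique morphism to every object of the comma category, so it is initial.

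The remaining step is the elementary observation that a limit indexed by a category with an initial object $c_0$ is computed by evaluation at $c_0$, the leg at $c_0$ being the identity: any cone $(Y,\{p_{(\omega,f)}\})$ factors uniquely through $\mS''(\omega_0)$ via $p_{(\omega_0,\one_{\iota\omega_0})}$, because the sole morphism $(\omega_0,\one_{\iota\omega_0})\to(\omega_0,\one_{\iota\omega_0})$ is the identity. Applying this with $c_0=(\omega_0,\one_{\iota\omega_0})$ gives $Ran_{\iota}(\mS'')(\iota\omega_0)=\mS''(\omega_0)$ and identifies the indexed leg, which is $\varepsilon_{\omega_0}$, with $\one_{\mS''(\omega_0)}$. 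Reading this off at the two objects yields $\varepsilon_{\cN}=\one_{\bN}$ and $\varepsilon_{\DN}=\one_{\G{\bN}}$, so indeed $Ran_{\iota}(\mS'')\circ\iota=\mS''$.

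The argument is purely formal, and I do not expect a real obstacle; the only point demanding care is the verification that $(\omega_0,\one_{\iota\omega_0})$ is initial, which is exactly where the full-faithfulness of $\iota$ enters. This is the standard fact that the counit of a right Kan extension along a fully faithful functor is invertible, here sharpened to an identity by taking each relevant limit to be its value at the initial object of the indexing comma category.
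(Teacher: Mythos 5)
Your proof is correct, but it takes a genuinely different route from the paper's. The paper verifies the universal property of $\varepsilon$ head-on: it fixes an arbitrary functor $\mathcal{H}\colon\SCvx\to\mathbf{Meas}$ with a natural transformation $\alpha\colon\mathcal{H}\circ\iota\Rightarrow\mS''$, builds $Ran_{\iota}(\mS'')(A)$ explicitly for every $A\in\SCvx$ as the set of cones over $A\!\downarrow\!\iota\to\Om\xrightarrow{\mS''}\mathbf{Meas}$ equipped with the initial $\sigma$-algebra making the projections measurable, and then obtains $\widehat{\alpha}_A$ from the universal property of that limit; the identification $\varepsilon_{\cN}=\one_{\bN}$, $\varepsilon_{\DN}=\one_{\G\bN}$ is essentially asserted along the way (indeed the paper's claim that the components $\alpha_{\cN}$, $\alpha_{\DN}$ of an arbitrary $\alpha$ are identities is not justified as stated). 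You instead isolate the one structural fact doing the work: since $\Om$ is a \emph{full} subcategory, the comma category $(\iota\omega_0\downarrow\iota)$ has the initial object $(\omega_0,\one_{\iota\omega_0})$, a limit over a category with an initial object is computed by evaluation there, and the counit leg at that object is the identity. This is the standard ``Kan extension along a fully faithful functor is an honest extension'' argument, and it proves exactly what the lemma claims (with the usual caveat, which you correctly flag, that one must choose the pointwise limit to be evaluation at the initial object to get literal identities rather than isomorphisms). What your route buys is precision and economy: the full faithfulness of $\iota$ enters at a single, clearly marked point, and no quantification over all $(\mathcal{H},\alpha)$ is needed. What the paper's route buys is the explicit cone-with-initial-$\sigma$-algebra construction of $Ran_{\iota}(\mS'')(A)$ at \emph{arbitrary} $A$, which is reused later (for the pullback defining $\SCvx_{\star}$ and for the barycenter maps), whereas your argument only computes the Kan extension on objects in the image of $\iota$.
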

\begin{proof} We need to show that  the natural transformation $\varepsilon$ is a universal arrow, i.e., given any functor $\mathcal{H}: \SCvx \rightarrow \mathbf{Meas}$ and natural transformation $\alpha: \mathcal{H}\circ \iota \Rightarrow \mS''$ that there exists a unique natural transformation $\widehat{\alpha}: \mathcal{H} \rightarrow \widehat{\mS}$ such that the diagram on the right hand side
\begin{equation} \nonumber
\begin{tikzpicture}
        \node  (whS)  at  (0,0)  {$Ran_{\iota}(\mS'') \circ \iota$};
        \node  (Sp)          at   (3,0)  {$\mS''$};
        \node   (Hi)        at     (0, -1.5)  {$\mathcal{H} \circ \iota$};
        \node   (B)        at    (-3, 0)   {$Ran_{\iota}(\mS'')$};
        \node   (A)        at     (-3, -1.5)  {$\mathcal{H}$};
        \node   (c)        at    (-3.5, -2.5)  {in $Func(\SCvx, \mathbf{Meas})$};
        \node   (d)        at     (1, -2.5)   {in $Func(\Om, \mathbf{Meas})$};
        \draw[->,double,above] (whS) to node {$\varepsilon$} (Sp);
        \draw[->,double,below] (Hi) to node {$\alpha$} (Sp);
        \draw[->,double,left] (Hi) to node {$\widehat{\alpha} \circ \iota$} (whS);
        \draw[->,double,left] (A) to node {$\widehat{\alpha}$} (B);

\end{tikzpicture}
\end{equation}
commutes.  Given $\mathcal{H}$ and $\alpha$ we know the two measurable maps $\alpha_{\cN}: \mathcal{H}(\cN) \rightarrow \bN$ and $\alpha_{\Delta_{\bN}}: \mathcal{H}(\DN) \rightarrow \G{\bN}$ are the identity maps, hence $\mathcal{H}(\cN)=\bN$ and $\mathcal{H}(\DN)=\G{\bN}$.  To make the right hand side diagram commute we define $\widehat{\alpha}_{\cN} = \alpha_{\cN}$ and $\widehat{\alpha}_{\DN} = \alpha_{\DN}$.  

To see that this specification uniquely specifies $\mathcal{H}$ on all of $\SCvx$ let $A$ be an arbitrary object in $\SCvx$.  The functor $Ran_{\iota}(\mS'')$ on $A$ is determined by the 
\be \nonumber
Ran_{\iota}(\mS'')(A) = \lim \D = \lim \big( A \! \downarrow \! \iota \stackrel{\pi}{\longrightarrow} \Om \stackrel{\mS''}{\longrightarrow} \mathbf{Meas} \big),
\ee
and we can construct the limit in the usual fashion using the set of all cones over the diagram $\D$,
\be \nonumber
\begin{tikzpicture}
       \node  (Cone)  at  (0,0)   {$Cone(\one,\D)$};
       \node  (Df)        at   (2,1)   {$\D_f$};
       \node   (Dg)      at    (2, -1)  {$\D_g$};
       \draw[->,above] (Cone) to node {$\lambda_f$} (Df);
       \draw[->,below] (Cone) to node {$\lambda_g$} (Dg);
       \draw[->,right] (Df) to node {$\D_{f \stackrel{\phi}{\longrightarrow} g}$} (Dg);
\end{tikzpicture}
\ee
where for every component $f \in A \! \downarrow \! \iota$ the projection map $\lambda_f(\alpha) := \alpha_f(\star) \in \D_f$, and where each object $\D_f \in \{ \bN, \G{\bN} \}$.  One then endows $Cone(\one, \D)$ with the initial $\sigma$-algebra such that each projection map $\lambda_f$ is a measurable function.  Hence the 
\be \nonumber
\lim \D = \big( (Cone(\one,\D), \Sigma_{init}), \{\lambda_f \, | \, \forall f \in_{ob} A \! \downarrow \! \iota\}\big).
\ee

Given $A \in \SCvx$ and the functor $\mathcal{H}$ we have  $(\mathcal{H}(A), \{\mathcal{H}f: \mathcal{H}A \rightarrow cod(f) \, | \, f\in A\! \downarrow \iota\})$ specifies a cone over $\D$ where $cod(f) = \bN$ or $cod(f) = \G{\bN}$.  Consequently we obtain a measurable map  $\widehat{\alpha}_A: \mathcal{H}(A) \rightarrow \lim \D$ which is the unique arrow guaranteed by the universal property of $\lim \D = Ran_{\iota}(\mS'')(A)$. These unique maps, one for each object in $\SCvx$, specify the components of the natural transformation $\widehat{\alpha}$.

\end{proof}

Now take the pullback of $Ran_{\iota}(\mS'')$ along the inclusion functor $\Std_2 \hookrightarrow \mathbf{Meas}$ 
 \begin{equation}  \nonumber
\begin{tikzpicture}
    \node   (SCvxstar)  at  (0,0)   {$\SCvx_{\star}$};
    \node   (Std)    at   (4,0)   {$\Std_2$};
    \node   (SCvx)    at  (0,-1.5)  {$\SCvx$};
    \node   (Meas)     at  (4., -1.5)   {$\mathbf{Meas}$};
    
    \draw[->,below] (SCvx) to node {$Ran_{\iota}(\mS'')$} (Meas);
    \draw[>->,right] (Std) to node {$\iota$} (Meas);
    \draw[>->,left,dashed] (SCvxstar) to node {$$} (SCvx);
    \draw[->,above,dashed] (SCvxstar) to node {$\mathbf{\Sigma}$} (Std);

\end{tikzpicture}
\end{equation}
to define the functor $\mS: \SCvx_{\star} \rightarrow \Std_2$.  The reason we require that subcategory of $\SCvx$ is because there are many super convex spaces for which the right Kan extension $Ran_{\iota}(\mS'')(A)$ is not a standard measurable spaces.  For example, $Ran_{\iota}(\mS'') (\prod_{i \in [0,1]} \DN) = \prod_{i \in [0,1]}\G{\bN}$ which is not a standard measurable space. 

  The functor $\G$, viewed as the functor  $\T: \Std_2 \rightarrow \SCvx_{\star}$ is left adjoint to $\mS$.  The general argument is as follows. Suppose we are given a separated standard measurable space $X$ with the countable generating basis $\{U_i\}_{i \in \Nat}$ so that $\Sigma_X=\sigma(\mathcal{F})$ where  $\mathcal{F}: X \rightarrow \bN$ specifies the field which generates the $\sigma$-algebra on $X$.  In other words, $U_i = \mathcal{F}^{-1}(i)$.  Then $\T{X}$ is a super convex space which lies in $\SCvx_{\star}$ because if we compute the right Kan extension $Ran_{\iota}(\mS'')\big(\T{X})$ we obtain $\G{X}$ since the set of evaluation maps $ev_{U_i}: \T{X} \rightarrow \DN$ given by $ev_{U_i}(P)= P(U^c)\delta_0 + P(U) \delta_1$ are all countably affine maps and coseparate $\T{X}$.  Hence the right Kan extension of $\T{X}$ is just the underlying set of $\T{X}$ with the initial $\sigma$-algebra making the evaluation maps measurable functions - which is precisely how the Giry monad assigns the $\sigma$-algebra to the underlying set of $\G{X}$. This argument proves, for $\G$ the Giry monad on $\Std_2$, that  
  
\begin{lemma} \label{sigmaAlgebra} The composite functor $\mS \circ \T = \G$.
\end{lemma}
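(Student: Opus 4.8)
The plan is to unwind the definition of $\mS$ and to check that $\mS\circ\T$ agrees with $\G$ both on objects and on morphisms. By construction $\mS$ is the corestriction of $Ran_{\iota}(\mS'')$ to the subcategory $\SCvx_{\star}$, so for any $X \in \Std_2$, using that $\T X$ lies in $\SCvx_{\star}$ (as established in the discussion preceding the lemma), we have $\mS(\T X) = Ran_{\iota}(\mS'')(\T X)$. By the previous lemma this value is the limit of the diagram $\D = \big(\T X \! \downarrow \! \iota \xrightarrow{\pi} \Om \xrightarrow{\mS''} \mathbf{Meas}\big)$, realized concretely as the set $Cone(\one, \D)$ equipped with the initial $\sigma$-algebra making every projection $\lambda_f$ measurable. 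The whole proof therefore reduces to identifying this limit, as a measurable space, with $\G X$, and then verifying the morphism clause.

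First I would match underlying sets. Every $P \in \T X = \G X$ determines a cone by $f \mapsto f(P)$, giving a canonical map $\T X \to Cone(\one, \D)$; because the evaluation maps $ev_{U_i} \in \T X \! \downarrow \! \iota$ coseparate $\T X$, this map is injective. For surjectivity I would take an arbitrary compatible cone $(x_f)_f$, read off the components $x_{ev_{U_i}} \in \DN = \G\bN$, and recover candidate values $P(U_i)$ as the coefficient of $\delta_1$ in $x_{ev_{U_i}} = P(U_i^c)\delta_0 + P(U_i)\delta_1$; the compatibility of the cone against the countably affine maps of $\Om$ should force these numbers to obey the normalization and countable-additivity constraints, so that they assemble into a genuine probability measure $P$ on $\sigma(\mathcal{F}) = \Sigma_X$ with $f(P) = x_f$ for every $f$. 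This is where standardness of $X$, i.e.\ the countable generating field $\{U_i\}$, is essential, and I expect this surjectivity to be the \emph{main obstacle}: one must rule out compatible cones that fail to arise from an honest measure, which the dense family $\{ev_{U_i}\}$ together with the arithmetic encoded by the morphisms of $\Om$ is exactly designed to prevent.

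Next I would match $\sigma$-algebras. The initial $\sigma$-algebra on the limit is generated by the projections $\lambda_f$, in particular by the $\lambda_{ev_{U_i}}$, which under the above bijection transport to the maps $P \mapsto P(U_i)$ on $\G X$. A monotone-class ($\pi$-$\lambda$) argument then shows that $\{P \mapsto P(U_i)\}_i$ generates precisely the Giry $\sigma$-algebra $\sigma\big(\{P \mapsto P(U) : U \in \Sigma_X\}\big)$, since $\Sigma_X = \sigma(\{U_i\})$; conversely every countably affine $f : \T X \to \iota\omega$ is measurable for the Giry $\sigma$-algebra, so the remaining projections contribute nothing new. Hence the two $\sigma$-algebras coincide and $\mS(\T X) = \G X$ as standard measurable spaces. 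Finally, for a morphism $f : X \to Y$ in $\Std_2$, functoriality of $Ran_{\iota}(\mS'')$ makes $\mS(\T f)$ act on cones by precomposition with $\T f = \G f$, which under the identifications above is the pushforward $P \mapsto P \circ f^{-1}$ defining $\G f$; this verification is routine once the object-level identification is secured.
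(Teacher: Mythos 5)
Your proposal is correct and takes essentially the same route as the paper: the paper's own proof is just the paragraph preceding the lemma, which identifies $\mS(\T X)=Ran_{\iota}(\mS'')(\T X)$ with $\G X$ on the grounds that the evaluation maps $ev_{U_i}$ are countably affine and coseparate $\T X$, so that the limit is the underlying set of $\T X$ carrying the initial $\sigma$-algebra generated by the evaluations, i.e.\ the Giry $\sigma$-algebra. If anything, your write-up is more careful than the paper's, since the two points you flag as the real work---surjectivity of the canonical map onto the set of compatible cones, and the fact that the non-evaluation projections contribute nothing new to the $\sigma$-algebra---are asserted silently by the paper rather than argued.
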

  
  Of course to prove that $\T \dashv \mS$ it is necessary, among other things,  to show that the counit of the proposed adjunction yields a barycenter map $\epsilon_A: \T(\mS A) \rightarrow A$ for every $A \in_{ob} \SCvx_{\star}$. This follows from the observation that since $\mS': \Om \rightarrow \Std_2$ is codense the measurable functions  $f:\mS A \rightarrow \bN$ characterize $\mS A$,
   \be \nonumber
 \lim \big( \operatorname{\mS A \! \downarrow \! \mS}' \rightarrow \Om \stackrel{\mS'}{\longrightarrow} \Std_2\big) =   (\mS A, \{f: \mS A \rightarrow \bN\}). 
  \ee
   But we can construct a family of  composite arrows $\epsilon_{\bN} \circ \G{f}: \G(\mS A) \rightarrow \bN$ specifying a cone over  the diagram \mbox{$\operatorname{\mS A \! \downarrow \! \mS}' \rightarrow \Om \stackrel{\mS'}{\longrightarrow} \Std_2$},
and hence there exists a unique measurable function $\G(\mS A) \rightarrow \mS A$. Further standard arguments show that unique arrow is in fact countably affine.   The full argument is carried out in \S \ref{sec:barycenter}.

\section{The fundamental $\G$-algebra}
 The set of all probability measures on the measurable space $\bN$ can be characterized as
 \be \nonumber
 \G{\bN} = \{ \sum_{i \in \Nat} p_i \, \delta_i \, | \, \textrm{ for all sequences }\mathbf{p}: \Nat \rightarrow [0,1] \textrm{ such that }  \lim_{N \rightarrow \infty} \{ \sum_{i=0}^{N} p_i \}= 1\},
 \ee
 where each $\delta_i$ is the Dirac measure on  $i \in \Nat$, and $\mathbf{p}(i)=p_i$.   Thus a probability measure on $\bN$ is equivalent to specifying a sequence $\mathbf{p}: \Nat \rightarrow [0,1]$ satisfying the condition $\lim_{N \rightarrow \infty} \{\sum_{i=0}^N p_i \} = 1$, and hence we  write  \mbox{$\mathbf{p} \in \G{\bN}$}. 
 
We claim the function
 \begin{equation}   \label{epsilonN}
 \begin{tikzpicture}[baseline=(current bounding box.center)]
          \node   (GN)   at   (0,0)    {$\G{\bN}$};
          \node    (N)     at    (3,0)   {$\bN$};
          \node     (p)    at    (0, -.8)   {$\displaystyle{\sum_{i \in \Nat}} p_i \, \delta_i$};
          \node     (q)  at    (3, -.6)   {$\displaystyle{min_{i}} \{i \, | \, p_i>0 \}$};
          \node      (ph)  at  (.5, -.6)   {};
          
          \draw[->,above] (GN) to node {$\epsilon_{\bN}$} (N);
          \draw[|->] (ph) to node {} (q);
    \end{tikzpicture}
\end{equation}
is a $\G$-algebra which is not free.  First note that

 \begin{lemma} \label{generalProp} The function  \mbox{$\epsilon_{\mathbb{N}}: \G{\mathbb{N}} \rightarrow \bN$}  is a  measurable map.
\end{lemma}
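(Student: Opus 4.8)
The plan is to exploit the fact that the codomain $\bN$ carries the discrete (power-set) $\sigma$-algebra, so that to verify measurability of $\epsilon_{\bN}$ it suffices to show that the preimage $\epsilon_{\bN}^{-1}(\{n\})$ is measurable in $\G{\bN}$ for every $n \in \Nat$. Since every subset $S \subseteq \Nat$ is the countable union $\bigcup_{n \in S}\{n\}$, and preimages commute with countable unions, measurability of every singleton-preimage forces $\epsilon_{\bN}^{-1}(S)$ to be a countable union of measurable sets, hence measurable; thus checking singletons is enough.

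First I would recall the standard description of the $\sigma$-algebra on $\G{\bN}$ assigned by the Giry monad (the description used in the discussion of Lemma \ref{sigmaAlgebra}): it is the initial $\sigma$-algebra making every evaluation map $ev_U \colon \G{\bN} \to [0,1]$, $\mathbf{p} \mapsto \mathbf{p}(U)$, measurable, where $[0,1]$ carries its Borel $\sigma$-algebra. In particular, since each singleton $\{i\}$ is measurable in $\bN$, every coordinate evaluation $ev_{\{i\}} \colon \mathbf{p} \mapsto p_i$ is a measurable function.

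Next I would compute the singleton preimage explicitly. By the definition of $\epsilon_{\bN}$, one has $\epsilon_{\bN}(\mathbf{p}) = n$ precisely when $p_0 = \cdots = p_{n-1} = 0$ and $p_n > 0$, so that
\[
\epsilon_{\bN}^{-1}(\{n\}) = \left(\bigcap_{i=0}^{n-1} ev_{\{i\}}^{-1}(\{0\})\right) \cap ev_{\{n\}}^{-1}((0,1]).
\]
Each set $ev_{\{i\}}^{-1}(\{0\})$ is the preimage of the Borel set $\{0\} \subseteq [0,1]$ under a measurable map, and $ev_{\{n\}}^{-1}((0,1])$ is the preimage of a Borel set under a measurable map. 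Hence $\epsilon_{\bN}^{-1}(\{n\})$ is a finite intersection of measurable sets, and is therefore measurable; combined with the reduction in the first paragraph, this proves the claim.

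I do not expect a serious obstacle here. The only points requiring care are the well-definedness of $\epsilon_{\bN}$ (every $\mathbf{p} \in \G{\bN}$ satisfies $\sum_i p_i = 1$, so the index set $\{ i \mid p_i > 0 \}$ is nonempty and, as a subset of the well-ordered set $\Nat$, possesses a least element) and the translation of the condition ``$n$ is the least index carrying positive mass'' into the finite Boolean combination displayed above. Once that characterization is in hand, measurability is immediate from the measurability of the coordinate evaluations $ev_{\{i\}}$.
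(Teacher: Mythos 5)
Your proof is correct and follows essentially the same route as the paper: both rely on the measurability of the evaluation maps built into the $\sigma$-algebra on $\G{\bN}$ and reduce the claim to checking that the preimage of each singleton $\{n\}$ is measurable. If anything, your decomposition is slightly more careful than the paper's, which asserts $\epsilon_{\bN}^{-1}(n) = ev_{\downarrow n}^{-1}(0)$ and thereby omits the positivity condition $p_n>0$ (your factor $ev_{\{n\}}^{-1}((0,1])$); that imprecision in the paper does not affect the conclusion, since measurability of the sets $ev_{\downarrow n}^{-1}(0)$ for all $n$ already suffices.
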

\begin{proof}
If $W \in \powerset{\Nat}$, by definition of the $\sigma$-algebra on $\G{\bN}$, each function $ev_W: \G{\bN} \rightarrow [0,1]$ given by $ev_W( \sum_{i\in \Nat} p_i \delta_i) = \sum_{i \in W} p_i$ is measurable. Taking  $W = \downarrow \! n = \{0,1,\ldots,n-1\}$, it follows that $ev_{ \downarrow \! n}^{-1}( 0 ) = \{ \sum_{i \in \Nat} p_i \delta_i  \in \G{\mathbb{N}} \, | \, p_i = 0 \textrm{ for all }i < n\}$  is a measurable set in $\G(\mathbb{N})$.
Since $\epsilon_{\bN}^{-1}(n) = ev_{ \downarrow \! n}^{-1}( 0 )$ we conclude the function $\epsilon_{\bN}$ is measurable. 
\end{proof}

 \begin{lemma} \label{Galgebra} The function  \mbox{$\epsilon_{\mathbb{N}}: \G{\mathbb{N}} \rightarrow \bN$}  is a non-free $\G$-algebra.
\end{lemma}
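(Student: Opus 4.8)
The plan is to verify the two $\G$-algebra axioms for the pair $(\bN, \epsilon_{\bN})$ and then to argue separately that this algebra is not free. Measurability of $\epsilon_{\bN}$ is already in hand from Lemma~\ref{generalProp}, so only the unit and associativity equations remain. The unit law $\epsilon_{\bN} \circ \eta_{\bN} = \one_{\bN}$ is immediate: since $\eta_{\bN}(n) = \delta_n$ has $p_n = 1$ and $p_i = 0$ for $i \neq n$, its minimal support index is $n$, so $\epsilon_{\bN}(\delta_n) = n$.

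The substance lies in the associativity square $\epsilon_{\bN} \circ \mu_{\bN} = \epsilon_{\bN} \circ \G(\epsilon_{\bN})$ as maps $\G\G{\bN} \to \bN$. First I would unwind both composites on a fixed $P \in \G\G{\bN}$. On the left, $\mu_{\bN}(P)$ is the probability measure on $\bN$ whose weight at $j$ is the expectation $\bar{p}_j := \int_{\G{\bN}} p_j \, dP(\mathbf{p})$, so $(\epsilon_{\bN}\circ\mu_{\bN})(P) = \min\{ j \mid \bar p_j > 0\}$. On the right, $\G(\epsilon_{\bN})(P)$ is the pushforward $P \circ \epsilon_{\bN}^{-1}$, whose weight at $k$ is $P(A_k)$ where $A_k := \epsilon_{\bN}^{-1}(k) = \{\mathbf{p} \in \G{\bN} \mid p_0 = \cdots = p_{k-1} = 0,\ p_k > 0\}$ (these sets are measurable by Lemma~\ref{generalProp}), so $(\epsilon_{\bN}\circ\G(\epsilon_{\bN}))(P) = \min\{ k \mid P(A_k) > 0\}$. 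The goal thus reduces to the identity $\min\{ j \mid \bar p_j > 0\} = \min\{ k \mid P(A_k) > 0\}$.

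To prove this I would use the partition $\G{\bN} = \bigsqcup_{k \in \Nat} A_k$, valid because every probability measure has nonempty support and $\Nat$ is well-ordered, so a least support index always exists. Writing $m$ and $n$ for the two minima, the inequality $m \le n$ follows because $p_n > 0$ throughout $A_n$ and $P(A_n) > 0$, forcing $\bar p_n \ge \int_{A_n} p_n \, dP > 0$. For $m \ge n$ I would show $\bar p_j = 0$ for every $j < n$ by splitting $\bar p_j = \sum_k \int_{A_k} p_j \, dP$: the terms with $k > j$ vanish because $p_j \equiv 0$ on $A_k$, and the terms with $k \le j < n$ vanish because $P(A_k) = 0$ by minimality of $n$. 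Hence $m = n$, which is the heart of the argument and the step I expect to require the most care.

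Finally, for non-freeness I would argue via the underlying object. An isomorphism of $\G$-algebras induces an isomorphism of underlying measurable spaces, so if $(\bN, \epsilon_{\bN})$ were free it would satisfy $\bN \cong \G{X}$ in $\Std_2$ for some $X$. But $\G{X}$ is a single point when $X$ is a point and has the cardinality of the continuum whenever $X$ has at least two points, since it then contains the arc $\lambda \mapsto \lambda\delta_x + (1-\lambda)\delta_y$; it is therefore never a countably infinite discrete space. Since $\bN$ is countably infinite discrete, no such $X$ exists and the algebra is not free.
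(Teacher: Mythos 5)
Your proof is correct, and for the algebra axioms it follows the same verification scheme as the paper: both arguments unwind $\epsilon_{\bN}\circ\mu_{\bN}$ and $\epsilon_{\bN}\circ\G(\epsilon_{\bN})$ on a measure $P \in \G(\G{\bN})$ into the two minima $\min\{j \mid \bar p_j > 0\}$ and $\min\{k \mid P(A_k) > 0\}$. The difference is what happens next. The paper dismisses the equality of these two minima with ``a moment's thought shows that the two expressions are just two representations of the same quantity,'' whereas you actually prove it: using the countable partition $\{A_k\}_{k \in \Nat}$ of $\G{\bN}$, the estimate $\bar p_n \ge \int_{A_n} p_n \, dP > 0$ gives one inequality, and the decomposition $\bar p_j = \sum_k \int_{A_k} p_j \, dP$ with term-by-term vanishing (since $p_j \equiv 0$ on $A_k$ for $k > j$, and $P(A_k) = 0$ for $k \le j < n$ by minimality) gives the other. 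That is the genuine content of the associativity law, and your write-up supplies what the paper omits. The treatments of non-freeness also diverge: the paper only remarks that free algebras are ``specified using the natural transformation $\mu$,'' which at best rules out literal equality with some $(\G{X},\mu_X)$, while your cardinality argument --- the underlying object of a free algebra is $\G{X}$, which is empty, a singleton, or of cardinality at least $\mathfrak{c}$ via the injective arc $\lambda \mapsto \lambda\delta_x + (1-\lambda)\delta_y$ (injective because objects of $\Std_2$ are separated), hence never countably infinite like $\bN$ --- proves the stronger and more robust statement that $(\bN,\epsilon_{\bN})$ is not even \emph{isomorphic} to a free algebra. Two cosmetic points: ``has the cardinality of the continuum'' should read ``has cardinality at least that of the continuum,'' which is all your argument uses, and for completeness you should note the trivial case $X = \emptyset$, where $\G{X} = \emptyset$ is likewise not countably infinite.
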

\begin{proof} 
 The function $\epsilon_{\bN}$  
  satisfies the unit law $\epsilon_{\bN} \circ \eta_{\bN} = \one_{\bN}$ where $\eta_{\bN}(j) = \delta_j$ is the unit of the Giry monad, and the associative law  $\epsilon_{\bN} \circ \G(\epsilon_{\bN}) =  \epsilon_{\bN} \circ \mu_{\bN}$ follows from analysis of the given expressions $\mu_{\bN}(Q)$ and $\G\epsilon_{\bN}(Q)$ on the measurable subset $\{k\} \in \powerset{\Nat}$.
We have
\be \nonumber
\mu_{\bN}(Q)(k)= \int_{p \in \G{\bN}} ev_k(p) \, dQ = \int_{p \in \G{\bN}}p_k \, d(Q )
\ee
and hence 
\be \label{emQ}
\begin{array}{lcl}
\epsilon_{\bN}( \mu_{\bN}(Q)) &=& \epsilon_{\bN}\bigg( \sum_{i=0}^{\infty} \big( \int_{p \in \G{\bN}} p_i \, dQ  \big)\,  \delta_i \bigg) \\
&=& min_{i} \{i \, | \, \int_{p \in \G{\bN}} p_i \, dQ  >0 \}
\end{array}.
\ee

On the other hand we have
\be \nonumber
\big(\G{\epsilon_{\bN}}(Q)\big)(k)=Q\big(\epsilon_{\bN}^{-1}(k)\big)=Q\big( \{\sum_{j=k}^{\infty} p_j \delta_j \, | \, \sum_{j=k}^{\infty} p_i = 1 \,  \textrm{ and } p_k>0\} \big)
\ee
and hence
\be \label{eGe}
\begin{array}{lcl}
\epsilon_{\bN}(\G{\epsilon_{\bN}}(Q)) = min_i \{i \, | \, Q(\{ \sum_{j \in \Nat} p_j \delta_j \, | \, \sum_{j\in \Nat} p_j =1 \, \textrm{ and }p_i>0 \}) >0 \}.
\end{array}
\ee
 A moments thought shows that the two expressions given in equations (\ref{emQ}) and (\ref{eGe}) are just two representations of the same quantity.  Hence we have shown the measurable function $\epsilon_{\bN}$ is a $\G$-algebra, 
 and by Lemma \ref{generalProp} the function $\epsilon_{\bN}$ is a measurable function. Hence $\epsilon_{\bN}$ is a $\G$-algebra.
It is clearly not a free algebra because free algebras are specified using the natural transformation $\mu$ of the Giry monad.
\end{proof}
 
 Similarly, for $n$ any finite integer the function
 \begin{equation}   \nonumber
 \begin{tikzpicture}[baseline=(current bounding box.center)]
          \node   (GN)   at   (0,0)    {$\G{\mathbbmss{n}}$};
          \node    (N)     at    (3,0)   {$(\mathbbmss{n}, \powerset{\mathbbmss{n}})$};
          \node     (p)    at    (0, -.8)   {$\displaystyle{\sum_{i \in \mathbbmss{n}}} p_i \, \delta_i$};
          \node     (q)  at    (3, -.6)   {$\displaystyle{min_{i \in \mathbbmss{n}}} \{i \, | \, p_i>0 \}$};
          \node      (ph)  at  (.5, -.6)   {};
          
          \draw[->,above] (GN) to node {$\epsilon_{\mathbbmss{n}}$} (N);
          \draw[|->] (ph) to node {} (q);
    \end{tikzpicture}
\end{equation}
is a $\G$-algebra which is not free.

\vspace{.1in}
These $\G$-algebras are interesting because we can characterize them as a morphism in the category of super convex spaces, $\SCvx$.  This is not surprising because the objects of the category $\SCvx$ are themselves defined in terms of the elements of the underlying set of $\G{\bN}$.  

 Given any set $A$, a sequence $\mathbf{a}: \Nat \rightarrow A$, and any   $\mathbf{p} \in \G{\bN}$   we refer to the formal expression  ``$\sum_{i \in \Nat} p_i \, a_i$''  as a \emph{countably affine sum} of elements of $A$, and for brevity we use the notation ``$\sum_{i\in \Nat} p_i a_i$''  to refer to a countably affine sum dropping the explicit reference to the condition that the limit of partial sums $\sum_{i=0}^N p_i$ converges to one.  An alternative notation to the countable affine sum notation is to use the integral notation
 \be \nonumber
  \int_{\Nat} \mathbf{a} \, d\mathbf{p} := \sum_{i \in \Nat} p_i a_i.
 \ee  
We say a set $A$ has the structure of a super convex space  if it comes equipped with a function
\be \nonumber
\begin{array}{ccccc}
st_A&:& \G{\bN} \times \Set(\Nat, A) & \rightarrow& A \\
&:& (\mathbf{p}, \mathbf{a}) & \mapsto & \int_{\Nat} \mathbf{a} \, d\mathbf{p}
\end{array}
\ee
such that the following two axioms are satisfied:
\begin{quote}

\textbf{Axiom 1}. For every sequence $\mathbf{a}: \Nat \rightarrow A$ and every  $j \in \Nat$ the property 
\be \nonumber
\int_{\Nat} \mathbf{a} \, \, d\delta_j = a_j
\ee
holds.  

\vspace{.075in} 

\noindent
\textbf{Axiom 2}. If $\mathbf{p} \in \G{\bN}$ and $\mathbf{Q}: \Nat \rightarrow \G{\bN}$ is a sequence of probability measures on $\bN$   then 
\be \nonumber
\int_{j \in \Nat} \big( \int_{\Nat} \mathbf{a} \, d\mathbf{Q}^j \big) \, d\mathbf{p} = \int_{\Nat} \mathbf{a} \,  d(  \int_{j \in \Nat} \mathbf{Q}^{\bullet} \, d\mathbf{p}  \big).
\ee
Stated alternatively,  $ \displaystyle{\sum_{j \in \Nat}} p_j ( \displaystyle{\sum_{i \in \Nat}} Q^j_i a_i) 
= \displaystyle{\sum_{i \in \Nat}}( \displaystyle{\sum_{j \in \Nat}} p_j Q_i^j)a_i$.

\end{quote}
The second axiom uses  the pushforward measure $\G(\mathbf{Q})\mathbf{p} \in \G^2{\bN}$ and the natural transformation $\mu$ of the Giry monad at component $\bN$, \mbox{$\mu_{\bN}: \G^2{\bN} \rightarrow \G{\bN}$}, which yields the probability measure on  the measurable space $\bN$  whose value at the measurable set $\{j\}$ is given by the composite of measurable maps
\be \nonumber
\begin{tikzpicture}
      \node   (1)  at  (0,0)  {$\one$};
      \node   (GN) at  (-3,1.5)  {$\G{\bN}$};
      \node   (G2N) at  (0, 1.5)  {$\G^2{\bN}$};
      \node   (GN2)  at   (2.5,1.5)  {$\G{\bN}$};
      \node    (01)   at   (5,1.5)   {$[0,1]$};
      \draw[->,above] (GN2) to node {$ev_{\{j\}}$} (01);
      \draw[->,above] (G2N) to node {$\mu_{\bN}$} (GN2);
      \draw[->,left] (1) to node [xshift=-2pt,yshift=-2pt]{$\mathbf{p}$} (GN);
      \draw[->,above] (GN) to node {$\G{\mathbf{Q}}$} (G2N);
      \draw[->,left] (1) to node [xshift=-2pt,yshift=3pt]{\small{$ \int_{j \in \Nat} \mathbf{Q}^{j} \, d \mathbf{p}$}} (GN2);
      \draw[->,right] (1) to node [xshift=0pt,yshift=-17pt]{$\begin{array}{lcl}
\mu_{\bN}\big( \G(\mathbf{Q})\mathbf{p}\big)(\{j\}) &=& \int_{\{j\} }d\big(\G(\mathbf{Q})\mathbf{p}\big)  \\
&=& \int_{ \Nat} \mathbf{Q}^j \, d\mathbf{p} \\
&=& \sum_{i \in \Nat} p_i Q^j_i 
\end{array}$} (01);
\end{tikzpicture}
\ee
By taking $\mathbf{Q}^{\bullet}:\Nat \rightarrow \G{\bN}$ to be the constant sequence with value $\delta_j$ we see that the first axiom follows from the second axiom.

\vspace{.3in}

A morphism of super convex spaces, called a countably affine map, is a set function $m: A \rightarrow B$ such that
\be \nonumber
m\big( \int_{\Nat} \mathbf{a} \, d\mathbf{p}\big) = \int_{\Nat} (m \circ \mathbf{a}) \, d\mathbf{p},
\ee 
where the composite  $m \circ \mathbf{a}$ gives the sequence in $B$  with component $m(a_i)$.
Composition of countably affine maps is the set-theoretical composition.
Super convex spaces with morphisms the countably affine maps form a category denoted $\SCvx$. 

\vspace{.1in}

Having formally defined the category of super convex spaces we now note that the set of natural numbers $\Nat$  has a super convex space structure defined, for all $\mathbf{p} \in \G{\bN}$,  by 
  \be \nonumber
\displaystyle{\sum_{i \in \Nat}} p_i  \, \underline{i} = min_i \{ i \, | \, \textrm{such that }p_i>0 \},
\ee
and  the set $\Nat$ with this super convex space structure is denoted $\cN$ to distinguish it from the measurable space $\bN$.

\begin{lemma} The measurable function $\epsilon_{\bN}$ defined in equation (\ref{epsilonN}) is a countably affine map.
\end{lemma}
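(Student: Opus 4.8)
The plan is to verify the defining equation for a countably affine map directly at the level of underlying sets, unwinding both super convex structures explicitly. Concretely, fix $\mathbf{p} \in \G{\bN}$ and a sequence $\mathbf{Q}: \Nat \rightarrow \DN$, where we regard the codomain $\G{\bN}$ as the super convex space $\DN$; I must show
\begin{equation} \nonumber
\epsilon_{\bN}\Big( \int_{\Nat} \mathbf{Q}\, d\mathbf{p}\Big) = \int_{\Nat} (\epsilon_{\bN} \circ \mathbf{Q})\, d\mathbf{p},
\end{equation}
where the left-hand countably affine sum is computed in $\DN$ (the usual convex combination of probability measures) and the right-hand one is computed in $\cN$.

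First I would evaluate the left-hand side. By Axiom 2 (equivalently, by the description of $\int_{\Nat} \mathbf{Q}\, d\mathbf{p}$ via $\mu_{\bN}$ given above), the combination $\int_{\Nat} \mathbf{Q}\, d\mathbf{p}$ is the probability measure $\sum_{i \in \Nat} r_i\, \delta_i$ with $r_i = \sum_{j \in \Nat} p_j Q^j_i$, so that $\epsilon_{\bN}\big(\int_{\Nat} \mathbf{Q}\, d\mathbf{p}\big) = \min_i\{ i \mid r_i > 0\}$. Since every $p_j$ and every $Q^j_i$ is nonnegative, a sum $r_i = \sum_j p_j Q^j_i$ of nonnegative terms is strictly positive exactly when some term is, i.e. $r_i > 0 \iff \exists j\,(p_j > 0 \text{ and } Q^j_i > 0)$. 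Hence the left-hand side equals $\min\{ i \mid \exists j\, (p_j > 0 \text{ and } Q^j_i > 0)\}$.

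For the right-hand side I first need the value of a general countably affine sum in $\cN$, since the structure was only spelled out on the identity sequence $i \mapsto \underline{i}$. For an arbitrary sequence $\mathbf{b}: \Nat \rightarrow \cN$ I claim $\sum_{j} p_j \underline{b_j} = \min\{ b_j \mid p_j > 0\}$; this follows by applying Axiom 2 with the inner sequences $\mathbf{Q}'^{\,j} := \delta_{b_j}$ and the identity sequence, which rewrites $\sum_j p_j \underline{b_j}$ as $\int_{\Nat} \underline{(\cdot)}\, d(\sum_j p_j \delta_{b_j})$ and then collapses it via the defining formula for the identity sequence applied to the pushforward measure $\sum_j p_j \delta_{b_j}$. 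Taking $\mathbf{b} = \epsilon_{\bN} \circ \mathbf{Q}$, whose $j$-th term is $b_j = \min_i\{ i \mid Q^j_i > 0\}$, the right-hand side becomes $\min_j\{\, \min_i\{ i \mid Q^j_i > 0\} \mid p_j > 0\,\}$.

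It then remains to reconcile the two expressions, which is a reordering of nested minima: $\min_j\{ \min_i\{ i \mid Q^j_i > 0\} \mid p_j > 0\}$ ranges over exactly those $i$ for which there exists $j$ with $p_j > 0$ and $Q^j_i > 0$, and so equals $\min\{ i \mid \exists j\,(p_j > 0 \text{ and } Q^j_i > 0)\}$, matching the left-hand side. I expect the main obstacle to be the bookkeeping in the two middle steps rather than any conceptual difficulty: one must justify the general-sequence formula for the super convex structure on $\cN$ (which the defining equation only gives for the identity sequence) and keep the supports straight when interchanging the order of the two minimizations, taking care that the nonnegativity of the coefficients is precisely what makes ``positive sum'' and ``some positive summand'' interchangeable.
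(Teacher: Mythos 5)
Your proof is correct, but it is considerably more thorough than the paper's, which disposes of the lemma in one line: it evaluates $\epsilon_{\bN}$ on the canonical presentation $\sum_{i \in \Nat} p_i \delta_i$ and observes that the result, $\min_i\{i \mid p_i > 0\}$, is exactly the countably affine sum $\sum_{i \in \Nat} p_i\, \underline{i}$ in $\cN$ --- in effect checking affinity only against combinations of Dirac measures. You instead verify the defining equation against an arbitrary combination $\int_{\Nat} \mathbf{Q}\, d\mathbf{p}$ of arbitrary measures $Q^j \in \DN$, which is what the definition of a countably affine map literally demands; the reduction from arbitrary $\mathbf{Q}$ to the Dirac case is not free (it is itself an instance of Axiom 2), and your computation of both sides --- positivity of $\sum_j p_j Q^j_i$ iff some term is positive, plus the interchange of the nested minima --- supplies exactly that reduction. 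You also patch a second gap the paper leaves implicit: the super convex structure on $\cN$ is stated in the text only for the identity sequence, and your Axiom-2/pushforward argument that $\sum_j p_j\, \underline{b_j} = \min\{b_j \mid p_j > 0\}$ for an arbitrary sequence $\mathbf{b}$ is genuinely needed, since the paper uses this general formula silently both here and later (e.g.\ in the proofs of Lemma \ref{NS} and Corollary \ref{phiCommutes}). What the paper's route buys is brevity, at the cost of tacitly assuming that a map out of $\DN$ defined on Dirac measures via the unique-representation property is automatically countably affine; your route turns that assumption into a proof.
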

\begin{proof}

The super convex space structure of $\cN$ makes the measurable function $\epsilon_{\bN}: \DN \rightarrow \cN$ a countably affine map because 
\be \nonumber
\epsilon_{\bN}(\sum_{i \in \Nat} p_i \, \delta_i)= \sum_{i \in \Nat} p_i \, \underline{i} =  \min_i \{i \, | \, p_i>0\},
\ee
where the underline notation ``$\underline{i}$''  is used to distinguish the coefficients ``$p_i$'' from  the elements of $\Nat$ because the coefficients can assume the values $0$ and $1$. 
\end{proof}

\section{Basic properties of super convex spaces}    \label{defSCvx}
The most fundamental property of super convex spaces is
\begin{lemma} \label{ad} For $A$  any super convex space  every countably affine map $m \in \SCvx(\DN, A)$ is uniquely specified by a sequence in $A$, hence we have $\SCvx(\DN, A) \cong  \Set(\Nat,A)$.
\end{lemma}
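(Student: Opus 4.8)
The plan is to exhibit an explicit bijection by sending a countably affine map $m$ to the sequence of its values on the Dirac measures, and conversely sending a sequence $\mathbf{a}$ to the map it induces through the super convex structure $st_A$ of $A$. The crucial observation, which makes both directions work, is that every element $\mathbf{p} \in \DN = \G{\bN}$ is by its very description the countably affine sum $\mathbf{p} = \sum_{i \in \Nat} p_i\, \delta_i$ of the sequence $\Nat \to \DN$, $i \mapsto \delta_i$, weighted by $\mathbf{p}$ itself.

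First I would define $\Phi: \SCvx(\DN, A) \to \Set(\Nat, A)$ by $\Phi(m)(i) = m(\delta_i)$. Since $m$ is countably affine, applying the defining identity of a countably affine map to the representation above gives $m(\mathbf{p}) = m\big(\sum_{i \in \Nat} p_i \delta_i\big) = \sum_{i \in \Nat} p_i\, m(\delta_i)$, so that $m$ is entirely determined by the sequence $\Phi(m)$. This already forces injectivity of $\Phi$ and dictates what its inverse must be. Conversely I would define $\Psi: \Set(\Nat, A) \to \SCvx(\DN, A)$ by $\Psi(\mathbf{a})(\mathbf{p}) = \int_{\Nat} \mathbf{a}\, d\mathbf{p} = \sum_{i \in \Nat} p_i\, a_i$, using the structure map of $A$.

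The one genuine obstacle is verifying that $\Psi(\mathbf{a})$ really is countably affine; everything else is bookkeeping. For a sequence $\mathbf{Q}: \Nat \to \DN$ with $\mathbf{Q}^j = \sum_{i \in \Nat} Q^j_i \delta_i$ and a weight $\mathbf{p} \in \G{\bN}$, the countably affine sum $\int_{\Nat} \mathbf{Q}\, d\mathbf{p}$ computed in $\DN$ is the probability measure $\mu_{\bN}\big(\G(\mathbf{Q})\mathbf{p}\big) = \sum_{i \in \Nat}\big(\sum_{j \in \Nat} p_j Q^j_i\big)\delta_i$. Applying $\Psi(\mathbf{a})$ to this and comparing with $\int_{\Nat}(\Psi(\mathbf{a}) \circ \mathbf{Q})\, d\mathbf{p} = \sum_{j \in \Nat} p_j\big(\sum_{i \in \Nat} Q^j_i a_i\big)$, the required equality $\Psi(\mathbf{a})\big(\int_{\Nat} \mathbf{Q}\, d\mathbf{p}\big) = \int_{\Nat}(\Psi(\mathbf{a}) \circ \mathbf{Q})\, d\mathbf{p}$ is precisely Axiom 2 for the super convex structure on $A$. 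Hence $\Psi$ indeed lands in $\SCvx(\DN, A)$.

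Finally I would check that $\Phi$ and $\Psi$ are mutually inverse. On one hand $\Phi(\Psi(\mathbf{a}))(i) = \Psi(\mathbf{a})(\delta_i) = \int_{\Nat} \mathbf{a}\, d\delta_i = a_i$ by Axiom 1, so $\Phi \circ \Psi = \one$; on the other hand $\Psi(\Phi(m))(\mathbf{p}) = \sum_{i \in \Nat} p_i\, m(\delta_i) = m(\mathbf{p})$ by the displayed computation in the second paragraph, so $\Psi \circ \Phi = \one$. This yields the bijection $\SCvx(\DN, A) \cong \Set(\Nat, A)$. Thus the whole argument rests on two facts established earlier: that each $\mathbf{p}$ is the countably affine sum of the Diracs it weights, and that Axiom 2 is exactly the condition ensuring $\mathbf{a} \mapsto \int_{\Nat} \mathbf{a}\, d(\cdot)$ is countably affine.
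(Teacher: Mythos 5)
Your proof is correct, and its core is the same as the paper's: a countably affine map out of $\DN$ is pinned down by its values on the Dirac measures because every $\mathbf{p} \in \DN$ is the countably affine sum $\sum_{i \in \Nat} p_i \delta_i$. The difference is one of completeness: the paper's proof stops after this uniqueness observation, which only gives injectivity of $m \mapsto (i \mapsto m(\delta_i))$, whereas the stated isomorphism $\SCvx(\DN, A) \cong \Set(\Nat, A)$ also needs surjectivity --- that every sequence $\mathbf{a}$ actually induces a countably affine map $\Psi(\mathbf{a}) = \int_{\Nat} \mathbf{a}\, d(\cdot)$. Your verification of that step, identifying the required identity with Axiom 2 applied to the structure map of $A$ (together with Axiom 1 for the round-trip $\Phi \circ \Psi = \one$), is exactly the content the paper leaves implicit, so your write-up is the more complete of the two.
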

\begin{proof}
Every element $\mathbf{p} \in \DN$ has a unique representation as a countable affine sum $\mathbf{p} = \sum_{i \in \bN} p_i \delta_i$, and hence a countably affine map $m:\DN \rightarrow A$ is uniquely determined by where it maps each Dirac measure $\delta_i$.  Thus  $i \mapsto m(\delta_i)$  specifies a sequence in $A$.  
\end{proof}

Given any sequence of elements $\mathbf{a}: \Nat \rightarrow A$ we denote the countably affine map $\DN \rightarrow A$ specified by $\delta_i \rightarrow a_i$ with the notation $\langle \mathbf{a} \rangle$ or just $\langle a_i \rangle$ with the brackets implying we are viewing it as a countably affine map from $\DN$.   A sequence of elements $\mathbf{Q}: \Nat \rightarrow \DN$ specifies a countably affine map $\langle \mathbf{Q} \rangle: \DN \rightarrow \DN$ which can be interpreted as a countably affine transformation of the spanning set of elements, mapping  $\delta_i \mapsto Q_i$ where $Q_i = \sum_{j \in \Nat} q^i_j \delta_j$, and the transformation amounts to
 \begin{equation}   \nonumber
 \begin{tikzpicture}[baseline=(current bounding box.center)]
      \node   (DN) at  (0,0)  {$\DN$};
      \node   (DN2) at  (0, -1.6)  {$\DN$};
      \node     (A)   at    (2, -.8)   {$A$};
      \draw[->,above] (DN) to node {$\langle \mathbf{a} \rangle$} (A);
      \draw[->,below] (DN2) to node {$\langle \mathbf{b} \rangle$} (A);
      \draw[->,left] (DN2) to node {$\langle \mathbf{Q} \rangle$} (DN);
      \node    (c)  at   (4.5, -.8)   {$b_i = \displaystyle{\sum_{j \in \Nat}} q^i_j a_j .$};
   \end{tikzpicture}
\end{equation}

When the sequence $\mathbf{a}: \Nat \rightarrow A$ is itself a countably affine map $\mathbf{a}: \Nat \rightarrow A$ then the $\SCvx$-diagram
  \begin{equation}   \nonumber
 \begin{tikzpicture}[baseline=(current bounding box.center)]
      \node   (DN) at  (0,0)  {$\DN$};
      \node    (N)    at   (0, -1.6)   {$\cN$};
      \node     (A)   at    (2, -.8)   {$A$};
      \draw[->,above] (DN) to node {$\langle \mathbf{a} \rangle$} (A);
      \draw[->,below] (N) to node {$\mathbf{a}$} (A);
      \draw[->,left] (DN) to node {$\epsilon_{\bN}$} (N);
   \end{tikzpicture}
\end{equation}
commutes.   

Countably affine endomaps on the super convex space $\cN$ have a simple characterization.

\begin{lemma} \label{NS} A function $f: \cN \rightarrow \cN$ is a countably affine map if and only if $f$ is monotone, $i < j$ implies $f(i) \le f(j)$.
\end{lemma}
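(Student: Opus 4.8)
The plan is to first make the super convex structure on $\cN$ completely explicit for \emph{arbitrary} sequences (the given data only names it on the identity sequence), and then to reduce the affinity condition to a purely order-theoretic statement about minima.

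First I would compute the structure map $st_\cN$. For a general sequence $\mathbf{a}: \Nat \to \cN$, Lemma \ref{ad} says the sequence determines a unique countably affine map $\langle \mathbf{a}\rangle: \DN \to \cN$ with $\langle\mathbf{a}\rangle(\mathbf{p}) = \sum_{i}p_i a_i$. Now $\epsilon_{\bN}\circ \G(\mathbf{a})$ is also a countably affine map $\DN \to \cN$, and on each Dirac measure it gives $\epsilon_{\bN}(\G(\mathbf{a})(\delta_i)) = \epsilon_{\bN}(\delta_{a_i}) = a_i$, so it agrees with $\langle\mathbf{a}\rangle$ on the spanning set $\{\delta_i\}$. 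By Lemma \ref{ad} this forces $\langle\mathbf{a}\rangle = \epsilon_{\bN}\circ \G(\mathbf{a})$. Unwinding the pushforward then yields the clean formula
\[
\sum_{i \in \Nat} p_i\, a_i \;=\; \min\{\, a_i \;:\; p_i > 0 \,\},
\]
i.e.\ the super convex sum in $\cN$ is simply the minimum of the values $a_i$ taken over the support of $\mathbf{p}$.

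With this formula in hand, $f: \cN \to \cN$ is countably affine precisely when $f(\min\{a_i : p_i>0\}) = \min\{f(a_i) : p_i>0\}$ for every $\mathbf{p} \in \G{\bN}$ and every sequence $\mathbf{a}$. As $\mathbf{p}$ and $\mathbf{a}$ vary, the set $T := \{a_i : p_i > 0\}$ ranges over \emph{all} nonempty subsets of $\Nat$: given any nonempty $T$, take $\mathbf{p}$ with support exactly $T$ and $\mathbf{a}$ the identity sequence. Hence affinity is equivalent to the single condition that $f(\min T) = \min f(T)$ for every nonempty $T \subseteq \Nat$.

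It then remains to prove the equivalence of this min-preservation condition with monotonicity. For the forward implication I would apply the condition to the two-element set $T = \{i,j\}$ with $i<j$, giving $f(i) = \min\{f(i),f(j)\}$ and hence $f(i) \le f(j)$. For the converse, assuming $f$ monotone, fix a nonempty $T$ and set $m = \min T$; since $m \le t$ for every $t \in T$, monotonicity gives $f(m) \le f(t)$, and because $\{f(t) : t \in T\}$ is a nonempty subset of $\Nat$ it possesses a least element by well-ordering, which must therefore be $f(m)$. The step I expect to be the genuine obstacle is the first one: correctly deducing that the sum for an arbitrary sequence collapses to $\min_{p_i>0} a_i$, since the definition supplies this only for the identity sequence and one must invoke Lemma \ref{ad} together with the identity $\langle\mathbf{a}\rangle = \epsilon_{\bN}\circ\G(\mathbf{a})$. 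Everything afterward is routine order theory, the only mild care being the appeal to the well-ordering of $\Nat$ to guarantee the minimum exists when $T$ is infinite.
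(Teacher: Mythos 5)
Your proof is correct, and its core --- reducing countable affinity of $f:\cN\rightarrow\cN$ to the statement that $f$ preserves minima of nonempty subsets of $\Nat$, then settling that by elementary order theory --- is the same order-theoretic argument the paper uses. Where you genuinely diverge is in your preliminary step: you first pin down the structure map on \emph{arbitrary} sequences, proving $\sum_{i} p_i a_i = \min\{a_i : p_i>0\}$ via the identification $\langle\mathbf{a}\rangle = \epsilon_{\bN}\circ\G(\mathbf{a})$ (two countably affine maps out of $\DN$ agreeing on the Dirac measures are equal, by Lemma \ref{ad}). The paper skips this: its necessity direction tests affinity only on two-point combinations $\alpha i + (1-\alpha)j$ with $i<j$, and its sufficiency direction verifies the affinity equation only for the identity sequence, i.e.\ for sums of the form $\sum_i p_i\,\underline{i} = n$, leaving the case of a general sequence $\mathbf{a}:\Nat\rightarrow\cN$ implicit. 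Strictly speaking yours is the more complete argument, since a countably affine map must satisfy $f(\sum_i p_i a_i) = \sum_i p_i f(a_i)$ for \emph{every} sequence $\mathbf{a}$, not just the identity; the reduction of the general case to the identity case is exactly your pushforward computation (equivalently, Axiom 2 applied with $Q^j = \delta_{a_j}$). What the paper's shorter route buys is brevity; what yours buys is that the sufficiency direction actually covers all instances of the affinity condition, and that the formula for the super convex structure of $\cN$ on arbitrary sequences --- which the paper's definition states only for the spanning sequence --- is made explicit rather than assumed.
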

\begin{proof}
\underline{Necessary condition}  Suppose that $f: \cN \rightarrow \cN$ is a countably affine map.  Let \mbox{$i < j$}. By the super convex space structure on $\bN$ it follows, for all $\alpha \in (0,1)$, that  $\alpha i + (1-\alpha) j = i$ .  If $f$ is not monotone then there exist a pair of elements \mbox{$i,j \in \Nat$} such that \mbox{$i <j$} with \mbox{$f(j) < f(i)$}.  This implies, for all $\alpha \in (0,1)$, that  \mbox{$f(j)= \alpha f(i) + (1-\alpha) f(j) <   f(\alpha i + (1-\alpha) j ) =f(i)$}, which contradicts our hypothesis that $f$ is a countably affine map.
 
 \vspace{.1in}
 
\underline{Sufficient condition} Suppose $f$ is a monotone function, and
 that we are given an arbitrary countably affine sum $\sum_{i \in \Nat} p_i i = n$ in $\cN$, so that for all $i=0,1,\ldots,n-1$ we have $p_i=0$.  Since the condition defining the super convex structure is conditioned on the property ``for $p_i \ne 0$'', the countably affine sum is not changed  by removing any number of terms $i$ in the countable sum whose coefficient $p_i=0$.  Hence 
for all $j$ such that $n< j$ it follows that $f(n) \le f(j)$ so that
\be \nonumber
f( \sum_{i=0}^{\infty} p_i \, i) = f(n) = \sum_{i=n}^{\infty} p_i f(i) 
\ee
where the last equality follows from the definition of the super convex space structure on $\mathbb{N}$.
\end{proof}

\begin{corollary}  \label{rightIdempotent} Any $m \in \SCvx(\DN, \cN)$ can be written uniquely as $m=\phi \circ \epsilon_{\bN}$ where $\phi \in \SCvx(\cN,\cN)$.
\end{corollary}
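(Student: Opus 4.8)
The plan is to reduce the statement to a question about sequences via Lemma~\ref{ad} and to recognize the required factor $\phi$ through Lemma~\ref{NS}. By Lemma~\ref{ad} the map $m$ is completely determined by the sequence $i \mapsto m(\delta_i)$ in $\cN$, and since $\epsilon_{\bN}(\delta_i) = \min\{k \mid (\delta_i)_k > 0\} = i$, any factorization $m = \phi \circ \epsilon_{\bN}$ must satisfy $\phi(i) = \phi(\epsilon_{\bN}(\delta_i)) = m(\delta_i)$ for every $i \in \Nat$. This simultaneously forces the definition $\phi(i) := m(\delta_i)$ and yields uniqueness for free: $\epsilon_{\bN}$ is surjective, so two maps agreeing after precomposition with it must coincide. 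Thus the entire content of the corollary lies in the \emph{existence} half, i.e. in showing that the forced $\phi$ is genuinely a morphism of $\SCvx$ and that $\phi \circ \epsilon_{\bN} = m$ holds on all of $\DN$, not merely on the Dirac measures.

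To handle existence I would expand an arbitrary $\mathbf{p} = \sum_i p_i \delta_i$: countable affinity of $m$ gives $m(\mathbf{p}) = \sum_i p_i\, m(\delta_i)$, which by the super convex structure on $\cN$ equals $\min\{m(\delta_i) \mid p_i > 0\}$, whereas $\phi(\epsilon_{\bN}(\mathbf{p})) = m(\delta_{i_0})$ for $i_0 = \min\{i \mid p_i > 0\}$. Hence the required identity $\phi \circ \epsilon_{\bN} = m$ is equivalent to the assertion that $m(\delta_{i_0})$ always realizes the minimum $\min\{m(\delta_i)\mid p_i>0\}$, that is, to monotonicity of the sequence $i \mapsto m(\delta_i)$. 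By Lemma~\ref{NS} this same monotonicity is exactly what is needed for $\phi$ to be countably affine, so both remaining obligations collapse to the single claim that $i < j \Rightarrow m(\delta_i) \le m(\delta_j)$.

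This monotonicity step is where I expect the real work — and the main obstacle — to be. The natural attack is to test $m$ on the two-point combinations $\mathbf{p}_\alpha = \alpha\,\delta_i + (1-\alpha)\,\delta_j$ with $i < j$ and $\alpha \in (0,1)$: countable affinity together with the min-structure of $\cN$ forces $m(\mathbf{p}_\alpha) = \min\{m(\delta_i), m(\delta_j)\}$, and one must argue that this common value coincides with $m(\delta_i)$, matching $\epsilon_{\bN}(\mathbf{p}_\alpha) = i$. The delicate point is that countable affinity alone makes $m$ compute the minimum over the support, so ruling out $m(\delta_j) < m(\delta_i)$ requires genuinely invoking the order-theoretic content of $\cN$ — that affine combinations there are minima for the \emph{natural} order on $\Nat$, the same order selected by $\epsilon_{\bN}$. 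I would therefore expect the crux of the proof to be a careful comparison of the support-minimum computed by $m$ with the index-minimum computed by $\epsilon_{\bN}$, and I would scrutinize this step closely, since the factorization can only succeed if every countably affine $m \colon \DN \to \cN$ is forced to be non-decreasing on the Dirac measures.
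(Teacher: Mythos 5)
Your reduction is correct and in fact more careful than the paper's own argument: by Lemma \ref{ad} the map $m$ corresponds to the sequence $u_i = m(\delta_i)$, uniqueness is immediate since $\epsilon_{\bN}$ is epi (Lemma \ref{epi}), and the entire content is whether $i \mapsto m(\delta_i)$ is monotone, which by Lemma \ref{NS} is exactly what is required for $\phi$ to be a morphism of $\SCvx$. But the step you flagged as the crux is a genuine gap, and it cannot be filled: nothing forces a countably affine $m\colon \DN \rightarrow \cN$ to have a monotone Dirac sequence. The structure that the axioms force on $\cN$ is $\sum_i p_i a_i = \min\{a_i \mid p_i > 0\}$ for an \emph{arbitrary} sequence $\mathbf{a}$, and the surjectivity half of Lemma \ref{ad} (i.e.\ Axiom 2) shows that for \emph{every} sequence $\mathbf{u}\colon \Nat \rightarrow \Nat$, monotone or not, the formula $\langle\mathbf{u}\rangle(\mathbf{p}) = \min\{u_i \mid p_i > 0\}$ defines a countably affine map $\DN \rightarrow \cN$. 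Concretely, take $u_0 = 1$ and $u_i = 0$ for all $i \ge 1$. Then $\langle\mathbf{u}\rangle$ is countably affine and $\langle\mathbf{u}\rangle(\tfrac12\delta_0 + \tfrac12\delta_1) = \min\{1,0\} = 0$, yet any factorization $\langle\mathbf{u}\rangle = \phi\circ\epsilon_{\bN}$ forces $\phi(0) = \langle\mathbf{u}\rangle(\delta_0) = 1$, so $(\phi\circ\epsilon_{\bN})(\tfrac12\delta_0+\tfrac12\delta_1) = \phi(0) = 1 \ne 0$. Your two-point test makes this visible: $m$ computes the \emph{support-minimum} of the values $u_i$, while $\phi\circ\epsilon_{\bN}$ computes the value at the \emph{index-minimum}, and these agree for all $\mathbf{p}$ precisely when $\mathbf{u}$ is monotone.

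In other words, the corollary as stated is false; it holds exactly for those $m$ whose Dirac sequence is monotone, equivalently for the image of precomposition by $\epsilon_{\bN}$, which is a proper subset of $\SCvx(\DN,\cN) \cong \Set(\Nat,\Nat)$. You should also know that the paper's own proof contains precisely the gap you identified, concealed in notation: it writes the arbitrary sequence $\mathbf{u}$ as a map ``$\mathbf{u}\colon \cN \rightarrow \cN$'' --- silently promoting a set map to a morphism of $\SCvx$, which by Lemma \ref{NS} is legitimate only when $\mathbf{u}$ is monotone --- and then verifies the identity $m = \mathbf{u}\circ\epsilon_{\bN}$ only on the Dirac measures, where it holds for any sequence whatsoever. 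Your instinct to scrutinize the monotonicity step was exactly right; the honest conclusion of that scrutiny is a counterexample, not a proof, and the same defect propagates to the paper's subsequent use of this corollary in Diagram (\ref{basicTriangle}).
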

\begin{proof}  
By Lemma \ref{ad} every element $m \in \SCvx(\DN,\cN)$ can be represented by a map \mbox{$\langle \mathbf{u} \rangle: \DN \rightarrow \cN$} which is determined by the sequence  \mbox{$\mathbf{u}: \cN \rightarrow \cN$}, which maps $i \mapsto u_i$. The map $\epsilon_{\bN}(\delta_i) = i$ acts as an idempotent operator on the right since $\mathbf{u}(\epsilon_{\bN}(\delta_i))= u_i$.  Therefore  $m= \mathbf{u} \circ \epsilon_{\bN}$.
\end{proof}

In particular, this implies that given any countably affine map $\langle \mathbf{Q} \rangle: \DN \rightarrow \DN$ that there  exists a  countably affine map $\phi: \cN \rightarrow \cN$ such that the diagram
 \begin{equation}   \label{basicTriangle}
 \begin{tikzpicture}[baseline=(current bounding box.center)]
      \node   (DN) at  (0,0)  {$\DN$};
      \node   (DN2) at  (0, -1.6)  {$\DN$};

      \draw[->,left] (DN2) to node {$\langle \mathbf{Q} \rangle$} (DN);
       \node   (N2)  at (3, -1.6)  {$\cN$};
      \node    (N)  at   (3, -0)   {$\cN$};
      \draw[->, above] (DN) to node {$\epsilon$} (N);
      \draw[->,below] (DN2) to node {$\epsilon$} (N2);
      \draw[->,right] (N2) to node {$\phi$} (N);
   \end{tikzpicture}
\end{equation}
commute.

\begin{corollary} \label{phiCommutes} For every countably affine map $\phi: \cN \rightarrow \cN$ the property $\phi \circ \epsilon_{\bN} = \epsilon_{\bN} \circ \T{(\mS \phi)}$ holds.
\end{corollary}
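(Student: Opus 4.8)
The plan is to verify the identity pointwise, evaluating both composites on an arbitrary element $\mathbf{p} = \sum_{i \in \Nat} p_i \delta_i \in \DN$, since both sides are maps $\DN \to \cN$ (reading $\epsilon_{\bN}$ here as the map $\epsilon: \DN \to \cN$). First I would unwind the right-hand side. The functor $\mS$ forgets the super convex structure, so $\mS\phi: \bN \to \bN$ is just $\phi$ regarded as a (trivially measurable) set function; and since $\T$ is the Giry functor, $\T(\mS\phi) = \G(\mS\phi)$ is the pushforward of measures. Explicitly, $\T(\mS\phi)(\mathbf{p}) = \sum_{i \in \Nat} p_i \delta_{\phi(i)}$, whose coefficient at the index $j$ is $\sum_{\{i \,:\, \phi(i) = j\}} p_i$.

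Next I would compute each side. The left-hand side is immediate: $\phi(\epsilon_{\bN}(\mathbf{p})) = \phi(\min_i\{i \mid p_i > 0\})$. For the right-hand side, applying $\epsilon_{\bN}$ to the pushforward returns the least index $j$ whose coefficient is positive, that is, the least $j$ for which $\phi(i) = j$ and $p_i > 0$ for some $i$; equivalently $\epsilon_{\bN}(\T(\mS\phi)(\mathbf{p})) = \min\{\phi(i) \mid p_i > 0\}$. Thus the corollary reduces to the single equality
\[
\phi\big(\min\{i \mid p_i > 0\}\big) = \min\{\phi(i) \mid p_i > 0\}.
\]

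The crux of the argument — and the only nonformal step — is this last equality, which is exactly where the monotonicity of $\phi$ enters. By Lemma \ref{NS} a countably affine endomap of $\cN$ is precisely a monotone function. Writing $S = \{i \mid p_i > 0\}$ and $m = \min S$, monotonicity gives $\phi(m) \le \phi(i)$ for every $i \in S$, while $m \in S$ shows that $\phi(m)$ is actually attained; hence $\phi(m) = \min\{\phi(i) \mid i \in S\}$, which is the required equality. I expect no serious obstacle here: once the two sides are written out, the statement amounts to the observation that a monotone map commutes with the minimum operation defining the super convex structure on $\cN$, so the whole corollary is an unwinding of the definitions of $\mS$, $\T$, and $\epsilon_{\bN}$ together with a single appeal to Lemma \ref{NS}.
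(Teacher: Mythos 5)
Your proof is correct and follows essentially the same route as the paper's: evaluate both composites pointwise on $\sum_{i \in \Nat} p_i \delta_i$, unwind $\T(\mS\phi)$ as the pushforward $\sum_{i \in \Nat} p_i \delta_{\phi(i)}$, and reduce everything to the equality $\phi(\min\{i \mid p_i > 0\}) = \min\{\phi(i) \mid p_i > 0\}$. If anything, your write-up is slightly more careful than the paper's, since you explicitly cite Lemma \ref{NS} (monotonicity) to justify that $\phi$ commutes with the minimum, a step the paper leaves implicit.
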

\begin{proof}  A direct verification gives
\be \nonumber
\begin{array}{lcl}
\big(\phi \circ \epsilon_{\bN}\big)( \sum_{i \in \Nat} p_i \delta_i) &=& \phi\big( \sum_{i \in \Nat} p_i \underline{i} \big) \\
&=& \phi\big( \min_i \{ i \, | \, p_i>0 \} \big)
\end{array}
\ee
Suppose  $\min_i \{ i \, | \, p_i>0 \} = k$ which  implies $p_i = 0$ for all $i=0,1,\ldots, k-1$ and $p_k>0$. Hence $\big(\phi \circ \epsilon_{\bN}\big)( \sum_{i \in \Nat} p_i \delta_i) =\phi(k)$.  

On the other hand we have
\be \nonumber
\begin{array}{lcll}
\big(\epsilon_{\bN} \circ \T(\mS \phi) \big)( \sum_{i \in \Nat} p_i \, \delta_i) &=& \epsilon_{\bN}\big( \sum_{i \in \Nat} p_i \delta_{\phi(i)} \big) &\textrm{by def. of push forward map}\\
&=& \sum_{i \in \Nat} p_i \phi(i)  &\textrm{by def. of }\epsilon_{\bN}\\
&=& \min_i \{\phi(i) \, | \, p_i >0 \} & \textrm{using the $\SCvx$ structure on }\cN
\end{array}
\ee
Since $p_i=0$ for all $i=0,1,2,\ldots,k-1$ and $p_k>0$ the minimum is given by $\phi(k)$, which proves the lemma.

\end{proof}

We can classify a super convex space into one of three types.
\begin{enumerate}
\item A space that can be embedded into a vector space is of \emph{geometric type}.  
\item 
Let $\Delta_{\mathbb{N}}^{\dagger} = \{\mathbf{p} \in (0,1)^{\Nat} \, | \, \sum_{i \in \Nat}p_i = 1\}$.  A space $A$, consisting of at least two points, is  of \emph{discrete type} if, for each fixed sequence $\mathbf{a}:\mathbb{N} \rightarrow A$, the function
 \be \nonumber
 \begin{array}{ccc}
\Delta_{\mathbb{N}}^{\dagger} & \longrightarrow & A \\
\mathbf{p} & \mapsto & \sum_{i \in \Nat} p_i a_i
\end{array}
\ee
is a constant function.
\item A space which is neither of discrete type or geometric type is of \emph{mixed type}.
\end{enumerate}

\begin{lemma} \label{DG}  If  $D$ a discrete type space and $G$ a geometric type space the set $\SCvx(D,G)$ consist of constant maps. 
\end{lemma}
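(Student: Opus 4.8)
The plan is to take an arbitrary countably affine map $m \in \SCvx(D,G)$ and show directly that $m(d)=m(d')$ for any two points $d,d' \in D$; combined with the fact that a discrete type space has at least two points, this yields that $m$ is constant. The idea is to feed $m$ a single well-chosen sequence and to play the discreteness of the source off against the linear structure of the target.

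First I would fix $d,d' \in D$ and consider the sequence $\mathbf{a}\colon \Nat \to D$ with $a_0 = d$ and $a_i = d'$ for all $i \ge 1$. Since $D$ is of discrete type, the countably affine sum $\sum_{i \in \Nat} p_i a_i \in D$ takes one and the same value $c \in D$ for every strictly positive probability vector $\mathbf{p} \in \Delta_{\mathbb{N}}^{\dagger}$. Applying $m$ and using that $m$ is countably affine then gives
\[
\sum_{i \in \Nat} p_i\, m(a_i) = m(c) \qquad \text{for all } \mathbf{p} \in \Delta_{\mathbb{N}}^{\dagger},
\]
so the value of this countable convex combination in $G$ does not depend on $\mathbf{p}$.

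Next, since $G$ is of geometric type it comes equipped with a countably affine embedding $e\colon G \hookrightarrow V$ into a vector space $V$, and $e$ carries the identity above to the statement that $\sum_{i} p_i w_i$ is a fixed vector, where $w_i := e(m(a_i))$ and the sum is now an honest convergent series in $V$. To extract equality of the $w_i$ I would compare two strictly positive coefficient vectors that differ only by a transposition: choose $\mathbf{p} \in \Delta_{\mathbb{N}}^{\dagger}$ with $p_0 \ne p_1$ and let $\mathbf{p}'$ be $\mathbf{p}$ with its zeroth and first entries swapped. The two tails over $i \ge 2$ coincide and therefore cancel, leaving
\[
0 = \sum_{i} p_i w_i - \sum_{i} p'_i w_i = (p_0 - p_1)(w_0 - w_1).
\]
As $p_0 \ne p_1$ this forces $w_0 = w_1$, and since $e$ is injective we conclude $m(a_0) = m(a_1)$, that is $m(d) = m(d')$. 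Because $d,d'$ were arbitrary, $m$ is constant.

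The main obstacle, and essentially the only nontrivial point, is the justification of the computation in $V$: one must use that $e$ preserves countable convex sums, so that the series genuinely converge in $V$ and are additive over the index set, which is precisely what allows the common tail to cancel and isolates the finite difference $(p_0 - p_1)(w_0 - w_1)$. Everything on the source side is handed to us directly by the definition of discrete type, so the real content of the lemma is this transfer of ``constant in $\mathbf{p}$'' across the embedding into a genuine linear identity in $V$.
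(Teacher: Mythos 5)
Your proof is correct, and it is in fact more faithful to the paper's definition of discrete type than the paper's own argument. The paper's proof rests on the intermediate claim that discreteness of $D$ forces $r d_1 + (1-r) d_2 \in \{d_1,d_2\}$ for all $r \in (0,1)$, and then reads off constancy of $m$ from the affine identity $m(r d_1 + (1-r) d_2) = r\,m(d_1) + (1-r)\,m(d_2)$. But that claim does not follow from the stated definition: discreteness only says that, for a fixed sequence $\mathbf{a}$, the value of $\sum_{i \in \Nat} p_i a_i$ is independent of the strictly positive weight vector $\mathbf{p} \in \Delta_{\mathbb{N}}^{\dagger}$, not that this constant value lies among the $a_i$. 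Indeed, the three-point coequalizer space $\{\underline{0}, \underline{u}, \underline{1}\}$ appearing in \S\ref{sec:barycenter} is of discrete type, yet $r\underline{0} + (1-r)\underline{1} = \underline{u}$, so the paper's intermediate claim fails there. Your argument sidesteps this: you use only the $\mathbf{p}$-independence that the definition actually provides, push it through $m$ by countable affinity, and then extract $m(d)=m(d')$ by the swap-and-cancel computation in the ambient vector space. Both proofs end with the same mechanism --- cancellation in $V$, using that the geometric embedding is countably affine and injective --- but your route covers all spaces satisfying the paper's definition of discrete type, including ones (like the three-point example) for which the paper's own first step is unjustified, so yours is the argument one should actually record.
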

\begin{proof}
Since $D$ is a discrete space, it follows that for all $d_1,d_2 \in D$ and all $r \in (0,1)$, that the property $r d_1 + (1-r) d_2 \in \{d_1,d_2\}$  holds, which implies that  the condition $m( r d_1 + (1-r) d_2 ) = r m(d_1) + (1-r) m(d_2)$
can only be satisfied if $m$ is a constant function.
\end{proof}

Lemma \ref{DG} shows that no useful information can be obtained about a geometric space $G$ by looking at maps from a discrete type space $D$ into $G$.  On the other hand, the maps $\SCvx(G, D)$ can provide quite useful information. 

\begin{example}  \label{two}
The set $\two$ with the super convex structure defined by 
\be \nonumber
(1-r) \, 0 + r \, 1 = \left\{ \begin{array}{ll} 0 & \textrm{ if and only if }r \in [0,1) \\ 1 & \textrm{ otherwise} \end{array} \right.
\ee
is of discrete type.  The inclusion map $\two \hookrightarrow \bN$ is a countably affine map.

There is also a super convex space $\twos$ with the structure defined by 
\be \nonumber
(1-r) \, 0 + r \, 1 = \left\{ \begin{array}{ll}1 & \textrm{ if and only if }r \in (0,1] \\ 0 & \textrm{ otherwise} \end{array} \right.
\ee
and there is a $\SCvx$-isomorphism 
 $sw:  \twos \rightarrow \two$ defined by $sw(0)=1$ and $sw(1)=0$.  The function $\epsilon_{\twos}: \T{\twos} \rightarrow \twos$ given by $(1-p) \delta_0 + p \delta_1\mapsto 1$ if and only if $p \in (0,1]$ specifies a countably affine map which becomes a $\G$-algebra map $\epsilon_{\twos}:\G{\twos} \rightarrow (\twos, \powerset{\twos})$ using the obvious $\sigma$-algebra structures associated with those super convex spaces.
  
When we employ characteristic functions and want to view $\chi_U$ as a measurable map into $\bN$ it is necessary to use the swap isomorphism to obtain
 \be \nonumber
 X \xrightarrow{\chi_U} \twos \xrightarrow{sw_2} \two \hookrightarrow \bN,
 \ee
 and similarly,
  \be \nonumber
 \G(X) \xrightarrow{ev_U} \G(\twos) \xrightarrow{\G(sw_2)} \G(\two) \xrightarrow{\epsilon_{\two}} \two \hookrightarrow \bN.
 \ee
 where $\epsilon_{\two}: \G(\two) \rightarrow \two$ is defined by
 \be \nonumber
 \epsilon_{\two}( (1-r) \delta_0 + r \delta_1) = \left\{ \begin{array}{ll} 0 & \textrm{for all }r\in [0,1) \\ 1& \textrm{if and only if }r=1 \end{array} \right.
 \ee
 or, as is usually done, by first using the isomorphism $\G(\two) \cong [0,1]$ mapping $(1-r) \delta_0 + r \delta_1 \mapsto r$ and then defining $\epsilon_{\two}(r) = 0$ for all $r \in [0,1)$ and $\epsilon_{\two}(r)=1$ if and only if $r=1$.
 
\vspace{.1in}

For every finite number $n \in \Nat$, let $\mathbbmss{n}=\{0,1,\ldots,n-1\}$  endowed with the super convex space structure $\sum_{i \in \Nat} p_i u_i = min_{i} \{u_i \, |  u_i \in \mathbbmss{n} \, \textrm{ s.t. } p_i>0 \}$.  The inclusion map $\mathbbmss{n} \hookrightarrow \cN$ is a countably affine map.  Just as for the space $\two$, one can choose the max function instead to obtain the super convex space $\mathbf{n}$, and there is a swap map $sw_n: \mathbbmss{n} \rightarrow \mathbf{n}$ defined by $i \mapsto (n-1) - i$ which is an isomorphism.  It is only for the case $\cN$ that we must choose the minimum function since we need to employ the well-ordered property.

\end{example}

\begin{example}  \label{01Iso} The $n-1$ dimensional affine simplex \mbox{$\Delta_{n} = \{ \sum_{i=0}^{n-1} p_i \delta_i \, | \, \sum_{i=0}^{n-1} p_i =1 \ \textrm{and }p_i \in [0,1]\}$} is the space of all probability measures on the set of $n$ points.  It is a space of geometric type since it can be embedded into the vector space ${\mathbb{R}}^{n-1}$.  The $1$-dimensional simplex $\Delta_{2} \cong [0,1]$ via the $\SCvx$-isomorphism $(1-r) \delta_0 + r \delta_1 \mapsto r$.  

 It is straight forward to show  the function
\be \nonumber
\begin{array}{ccc}
[0,1] & \longrightarrow & \Delta_2  \\
r & \mapsto &  r \delta_0 + (1-r) \delta_1
\end{array}
\ee
is both a $\SCvx$-isomorphism and a $\mathbf{Meas}$-isomorphism when $[0,1]$ has the usual $\sigma$-algebra generated by the intervals.
\end{example}


\begin{example} \label{mixedType}
A space of mixed type is given by $\Rinf$ which is the 
real line $\mathbb{R}$ with one point adjoined, denoted $\infty$, which satisfies the property that any countably affine sum $\sum_{i \in \mathbb{N}} p_i r_i = \infty$ if either (1) $r_j = \infty$ and $p_j>0$ for any index $j$, or (2) the sequence of partial sums does not converge.  It is for the latter reason that $\mathbb{R}$ is not a super convex space since we could take $p_i = \frac{1}{2^i}$ and $r_i = 2^{i+1}$ and the limit of the sequence does not exist in $\mathbb{R}$.  Thus while $\mathbb{R}$ is a convex space it is not a super convex space.  

 The only nonconstant countably affine map $j: \Rinf \rightarrow \two$ is given by $j(u)=1$ for all $u \in \mathbb{R}$ and $j(\infty)=0$ (for the super convex space structure on $\two$ determined by $\frac{1}{2} \underline{0} + \frac{1}{2} \underline{1} = \underline{0}$).  
\end{example}

\begin{example}
Given  the inclusion  map $\iota: [0,1] \hookrightarrow \Rinf$ and the countably affine function $f: [0,1] \rightarrow \Rinf$ given by $x \mapsto \frac{1}{2}x + \frac{1}{2}$  the coequalizer is $j:\Rinf \rightarrow \two$ specified in Example (\ref{mixedType})  because the equivalence class of any $u \in \mathbb{R}$ is all of $\mathbb{R}$, and hence the coequalizer is given by $j$.   For discrete type spaces the effect is less dramatic.  For example, choose two monotonic functions $f,g: \bN \rightarrow \bN$ which disagree on a finite number of terms. (See Lemma \ref{NS}.)
\end{example}

\section{The dense functor $\iota: \Om \rightarrow \SCvx$}

 \begin{lemma} \label{epi}
 In $\SCvx$ the countably affine map $\epsilon_{\bN}$ is an epimorphism.
 \end{lemma}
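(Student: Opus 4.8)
The plan is to reduce the statement to the elementary fact that $\epsilon_{\bN}$ is surjective on underlying sets, together with the observation that morphisms in $\SCvx$ are literally set functions composed set-theoretically (so the forgetful functor $\SCvx \to \Set$ is faithful). Recall that an epimorphism $e$ is a map for which $f \circ e = g \circ e$ forces $f = g$; since composition in $\SCvx$ is just composition in $\Set$, it suffices to verify this implication at the level of underlying functions, after which faithfulness upgrades the conclusion to an equality of $\SCvx$-morphisms.

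First I would record the key computation. For each $n \in \Nat$ the Dirac measure $\delta_n \in \DN$ satisfies $\epsilon_{\bN}(\delta_n) = \min_i\{\, i \mid (\delta_n)_i > 0 \,\} = n$. Thus $\epsilon_{\bN}$ is surjective as a set map, every point of $\cN$ being the image of the corresponding Dirac measure.

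Then, given any super convex space $Z$ and any pair of countably affine maps $f, g : \cN \to Z$ with $f \circ \epsilon_{\bN} = g \circ \epsilon_{\bN}$, I would evaluate both composites on $\delta_n$: for every $n \in \Nat$,
\be \nonumber
f(n) = f\big(\epsilon_{\bN}(\delta_n)\big) = g\big(\epsilon_{\bN}(\delta_n)\big) = g(n).
\ee
Hence $f$ and $g$ agree on all of $\cN$, so $f = g$ as set functions and therefore as morphisms, which shows $\epsilon_{\bN}$ is an epimorphism.

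There is no serious obstacle here: the only point worth flagging is that set-theoretic surjectivity is genuinely enough, precisely because $\SCvx$ is concrete over $\Set$ with a faithful forgetful functor, so no cokernel-pair or pushout argument is required. One should merely take a moment to confirm that ``epimorphism'' is intended in the categorical sense and that quantifying over all codomains $Z$ contributes nothing beyond the faithfulness already in play.
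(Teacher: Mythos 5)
Your proof is correct and takes essentially the same route as the paper: both arguments cancel $\epsilon_{\bN}$ by evaluating the two composites on the Dirac measures $\delta_n$, using $\epsilon_{\bN}(\delta_n)=n$ to conclude $f(n)=g(n)$ for all $n$ and hence $f=g$. Your extra framing via surjectivity and the faithful forgetful functor to $\Set$ is harmless packaging of the same computation.
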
  
 \begin{proof}
 Suppose that $f,g: \cN \rightarrow A$ are any two parallel arrows in $\SCvx$, and that $f \circ \epsilon_{\bN} = g \circ \epsilon_{\bN}$.
 The countably affine maps are completely specified by where they send the elements of $\cN$, and hence  $(f \circ \epsilon_{\bN})(\delta_i) = (g \circ \epsilon_{\bN})(\delta_i)$ for all $i \in \Nat$, which says $f(i)=g(i)$ for all $i \in \cN$. Hence $f=g$.
 \end{proof}

\begin{lemma} \label{DNDense}
The full subcategory of $\SCvx$ consisting of the single object $\DN$ is  a dense subcategory of $\SCvx$.
\end{lemma}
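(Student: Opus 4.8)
The plan is to prove density in its standard equivalent form: the restricted Yoneda functor $\tilde\iota\colon \SCvx \to \Set^{\{\DN\}^{op}}$, $A \mapsto \SCvx(\DN,A)$, is full and faithful. Since the one-object full subcategory on $\DN$ carries the endomorphism monoid $M := \SCvx(\DN,\DN)$, a presheaf on it is exactly a right $M$-set; here $\tilde\iota(A) = \SCvx(\DN,A)$ carries the right action by precomposition $h\cdot g = h\circ g$, and a morphism $\tilde\iota(A)\to\tilde\iota(B)$ is an equivariant map $\theta$ with $\theta(h\circ g)=\theta(h)\circ g$ for all $h\in\SCvx(\DN,A)$ and $g\in M$. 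I must therefore show that the assignment $m\mapsto (h\mapsto m\circ h)$ is a bijection from $\SCvx(A,B)$ onto the equivariant maps $\tilde\iota(A)\to\tilde\iota(B)$. Throughout I use that a constant map $c_a\colon\DN\to A$, $\mathbf q\mapsto a$, is countably affine (the super convex axioms give $\int(\text{const }a)\,d\mathbf q=a$, e.g.\ by applying Axiom 2 with $\mathbf Q^{\bullet}\equiv\delta_0$ and using $\sum_j q_j\,\delta_0=\delta_0$ in $\DN$), so each such $c_a$ is a genuine element of $\SCvx(\DN,A)$, and likewise the constant maps $e_i\colon\mathbf q\mapsto\delta_i$ and $c_{\mathbf p}\colon\mathbf q\mapsto\mathbf p$ are genuine elements of $M$.

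Faithfulness is immediate: if $m\circ h=m'\circ h$ for all $h$, then for each $a\in A$ we get $m(a)=(m\circ c_a)(\delta_0)=(m'\circ c_a)(\delta_0)=m'(a)$, so $m=m'$. For fullness, given an equivariant $\theta$, I define the candidate $m\colon A\to B$ by $m(a):=\theta(c_a)(\delta_0)$. The key is to exploit equivariance against the two families $e_i$ and $c_{\mathbf p}$ of constant endomorphisms of $\DN$.

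First I show $\theta(h)=m\circ h$ for every $h$. By Lemma \ref{ad} write $h=\langle\mathbf a\rangle$ with $h(\delta_i)=a_i$. The composite $h\circ e_i$ is the constant map $c_{a_i}$, so equivariance gives $m(a_i)=\theta(c_{a_i})(\delta_0)=\theta(h\circ e_i)(\delta_0)=(\theta(h)\circ e_i)(\delta_0)=\theta(h)(\delta_i)$. Thus $\theta(h)$ and $m\circ h$ are countably affine maps $\DN\to B$ agreeing on every Dirac measure $\delta_i$, so $\theta(h)=m\circ h$ by Lemma \ref{ad}; this says precisely that $\theta=\tilde\iota(m)$. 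It remains to check $m\in\SCvx(A,B)$. Fix a countable affine sum $\sum_i p_i a_i$ with $\mathbf p=\sum_i p_i\delta_i$ and $h=\langle\mathbf a\rangle$. Since $h\circ c_{\mathbf p}$ is the constant map at $h(\mathbf p)=\sum_i p_i a_i$, equivariance gives $m\!\big(\sum_i p_i a_i\big)=\theta(h\circ c_{\mathbf p})(\delta_0)=(\theta(h)\circ c_{\mathbf p})(\delta_0)=\theta(h)(\mathbf p)$, and since $\theta(h)$ is countably affine and $\theta(h)(\delta_i)=m(a_i)$ this equals $\sum_i p_i\,\theta(h)(\delta_i)=\sum_i p_i\,m(a_i)$. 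Hence $m$ is countably affine, completing fullness.

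I expect the last step — verifying that the set map $m$ is actually countably affine — to be the main obstacle, since the only leverage is the $M$-equivariance of $\theta$ and there is no a priori reason a componentwise-defined $m$ respects infinite affine combinations. The resolution is the observation that the constant endomorphisms $c_{\mathbf p}\in M$ are exactly the elements of the monoid that carry the affine-combination data of $A$ through $\theta$, with the $e_i$ pinning $m$ down on the spanning Dirac measures. Care is needed only to keep the equivariance identity oriented correctly for the right action, $\theta(h\circ g)=\theta(h)\circ g$, and to confirm at the outset that constant maps are countably affine so that $c_a,\,e_i,\,c_{\mathbf p}$ legitimately lie in the relevant hom-sets.
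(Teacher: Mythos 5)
Your proof is correct and takes essentially the same approach as the paper: density via full-and-faithfulness of the restricted Yoneda embedding, with faithfulness from constant maps and fullness by playing naturality (equivariance) against the constant endomorphisms of $\DN$ --- your $e_i$ and $c_{\mathbf{p}}$ are exactly the paper's constant maps $\overline{\mathbf{p}}$ specialized at $\delta_i$ and at general $\mathbf{p}$. One cosmetic remark: you invoke Lemma \ref{ad} to conclude $\theta(h)=m\circ h$ before $m$ is known to be countably affine, but this is harmless since your closing computation shows $\theta(h)(\mathbf{p})=\sum_i p_i\, m(a_i)=(m\circ h)(\mathbf{p})$ pointwise, so the conclusion stands once the two steps are reordered.
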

\begin{proof}
 We show the  functor $\RSCvx \rightarrow \Set^{\DN^{op}}$ given by $A \mapsto \SCvx(\bullet, A)$ is full and faithful.
 
 \underline{Faithful} 
Let $\overline{a}: \DN \rightarrow A$ given by $\delta_i = a$ for all $i$  denote the constant maps with value $a$. 
Suppose that $f,g : A \rightarrow B$ with $f(a) \ne g(a)$.  Then the induced natural transformations $f_{\star},g_{\star}: \SCvx( \bullet,A) \rightarrow \SCvx(\bullet,B)$ specified by 
 composition on the left with $f$ and $g$, respectively, yield two distinct natural transformations since  $f_{\star}( \overline{a} ) = f(a) \ne g(a) = g_{\star}( \overline{a} )$.
 
\underline{Fullness}  Suppose that $J \in Nat(\SCvx(\bullet,A), \SCvx(\bullet,B))$.  Then at the component $\DN$, for every $a \in A$, we have $J(\overline{a})  \in  \SCvx(\DN,B)$ which is a constant map into $B$, hence specifies a unique point in $B$.  
Thus the function $J_{\DN}$ determines a set mapping $A \rightarrow B$ specified by $a \mapsto J_{\DN}(\overline{a})$.  

The fact that $J_{\DN}$ specifies a countably affine map follows from naturality.  We have the commutative $\SCvx$-diagram

 \begin{equation}   \nonumber
 \begin{tikzpicture}[baseline=(current bounding box.center)]
          \node   (D1)   at   (-.5,0)    {$\DN$};
          \node    (DN)    at     (2.5, 0)    {$\DN$};
          \node     (A) at  (1.,-1.5)  {$A$};

          \draw[->,above] (D1) to node {$\overline{\mathbf{p}}$} (DN);
          \draw[->,below, right] (DN) to node [yshift=-3pt] {\small{$\langle a_i \rangle$}} (A);
          \draw[->,below,left] (D1) to node {\small{$\sum_{i \in \mathbb{N}} p_i a_i$}} (A);
         
     \end{tikzpicture},
 \end{equation}
where $\overline{\mathbf{p}}$ is a constant map into the element $\mathbf{p} \in \DN$, and the composite map $\langle a_i \rangle \circ \overline{\mathbf{p}}$ is  the constant map $\sum_{i \in \mathbb{N}} p_i a_i: \DN \rightarrow \one \rightarrow A$  with value $\sum_{i \in \mathbb{N}} p_i a_i \in A$.
By naturality we have the commutative $\Set$-diagram
 \begin{equation}   \nonumber
 \begin{tikzpicture}[baseline=(current bounding box.center)]
          \node   (SCNA)   at   (0,0)    {$\SCvx(\DN,A)$};
          \node    (SCNB)    at     (5, 0)    {$\SCvx(\DN,B)$};
          \node     (SC1A)   at   (0,-1.5)    {$\SCvx(\DN,A)$};
          \node    (SC1B)    at    (5,-1.5)  {$\SCvx(\DN,B)$};
          
          \draw[->,above] (SCNA) to node {$J_{\DN}$} (SCNB);
          \draw[->,below] (SC1A) to node   {$J_{\DN}$} (SC1B);
          \draw[->,left] (SCNA) to node {$\SCvx(\overline{\mathbf{p}},A)$} (SC1A);
          \draw[->,right] (SCNB) to node {$\SCvx(\overline{\mathbf{p}},B)$} (SC1B);
         
         \node  (D1)    at   (-3,0)    {$\DN$};
         \node   (DN)  at    (-3,-1.5)  {$\DN$};
         \draw[->,left] (DN) to node {$\overline{\mathbf{p}}$} (D1);
         
          \node   (ain)   at   (0,-3)    {$\langle a_i \rangle$};
          \node    (aib)    at     (6.6, -3)    {$J_{\DN}( \langle a_i \rangle )$};
          \node     (alphai)   at   (0,-4.5)    {$\sum_{i \in \Nat} p_i a_i$};
          \node    (Galphai)    at    (3.7,-4.5)  {$J_{\DN}(\sum_{i \in \Nat} p_i a_i)$};
          \node    (Galphab)   at    (6.5,-4.5) {$=J_{\DN}( \langle a_i\rangle) \circ \overline{\mathbf{p}}$};
          \node    (pha)     at    (6.5, -3.3)  {};
          \node    (phb)    at     (6.5,-4.2)   {};
          
          \draw[|->] (ain) to node {} (aib);
          \draw[|->] (ain) to node   {} (alphai);
          \draw[|->](alphai) to node {} (Galphai);
          \draw[|->] (pha) to node {} (phb);

     \end{tikzpicture}.
 \end{equation}
\noindent
Since $J_{\DN}( \langle a_i\rangle) \in \SCvx(\DN, B)$ it is specified by a family of points, $J_{\DN}( \langle a_i \rangle)  = \langle b_i \rangle$.  The equality in the lower right hand corner thus shows that the map defined by $J_{\DN}$ on the constant functions $\overline{a}$ specifies a countably affine $A \rightarrow B$, and hence $\mathbf{y}$ is full, i.e., $J_{\DN} = \mathbf{y}(m)=\SCvx(\bullet,m)$ for some $m \in \SCvx(A,B)$. 
\end{proof}

 \begin{corollary} \label{dense}  The inclusion functor $\iota: \Om \rightarrow \SCvx$ is dense.  Equivalently, the restricted dual Yoneda mapping $\mathcal{Y}^{op}: \SCvx \rightarrow \Set^{\Om^{op}}$ is  full and faithful.
  \end{corollary}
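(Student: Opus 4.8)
The plan is to deduce the corollary from the density of the single object $\DN$ established in Lemma \ref{DNDense}, using the general principle that enlarging a dense full subcategory cannot destroy density. Since $\{\DN\}$ is a full subcategory of $\Om$, precomposition with the inclusion $\DN^{op} \hookrightarrow \Om^{op}$ gives a restriction functor $\Set^{\Om^{op}} \to \Set^{\DN^{op}}$ through which the $\Om$-valued restricted Yoneda map factors the $\DN$-valued one. Faithfulness will be immediate from this factorization, while for fullness I would transport a natural transformation down to $\DN$, realize it there by a morphism $m$ via Lemma \ref{DNDense}, and then show that the extra component at $\cN$ is already forced by $m$.

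First I would dispatch faithfulness: if $f,g \in \SCvx(A,B)$ induce the same natural transformation $\SCvx(\iota(-),A) \Rightarrow \SCvx(\iota(-),B)$, then in particular their components at $\DN$ coincide, so the faithfulness half of Lemma \ref{DNDense} gives $f=g$.

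For fullness, let $\theta \colon \SCvx(\iota(-),A) \Rightarrow \SCvx(\iota(-),B)$ be a natural transformation in $\Set^{\Om^{op}}$. Its restriction to the full subcategory $\{\DN\}$ is exactly the kind of natural transformation treated in Lemma \ref{DNDense}, so fullness there produces a unique $m \in \SCvx(A,B)$ with $\theta_{\DN}=\SCvx(\DN,m)$, that is $\theta_{\DN}(k)=m\circ k$ for all $k \in \SCvx(\DN,A)$. It then remains only to verify the component at $\cN$, namely $\theta_{\cN}=\SCvx(\cN,m)$. This is where naturality against the single morphism $\epsilon_{\bN}\colon \DN \to \cN$ does all the work: for any $h \in \SCvx(\cN,A)$, naturality of $\theta$ with respect to $\epsilon_{\bN}$ yields
\[
\theta_{\cN}(h)\circ \epsilon_{\bN} \;=\; \theta_{\DN}(h\circ \epsilon_{\bN}) \;=\; m\circ h \circ \epsilon_{\bN} \;=\; \big(\SCvx(\cN,m)(h)\big)\circ \epsilon_{\bN}.
\]
Since $\epsilon_{\bN}$ is an epimorphism in $\SCvx$ by Lemma \ref{epi}, it is right-cancellable, so $\theta_{\cN}(h)=m\circ h$, as needed. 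Hence $\theta=\mathcal{Y}^{op}(m)$, and $\mathcal{Y}^{op}$ is full.

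The main obstacle is precisely this last step: a priori the component $\theta_{\cN}$ could carry information independent of $m$, since density of $\DN$ alone only guarantees determination at $\DN$. The decisive input is Lemma \ref{epi}, that $\epsilon_{\bN}$ is epi, which lets naturality against this one map pin $\theta_{\cN}$ down completely. Equivalently, epimorphicity of $\epsilon_{\bN}$ says the maps $\DN \to \cN$ are jointly epic, reflecting that $\cN$ is a colimit of copies of $\DN$; this is the abstract reason that adjoining $\cN$ to the dense subcategory $\{\DN\}$ preserves density.
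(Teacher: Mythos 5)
Your proof is correct and follows essentially the same route as the paper: restrict the natural transformation to the single object $\DN$, invoke Lemma \ref{DNDense} to produce the morphism $m$ realizing the $\DN$-component, and then use naturality against $\epsilon_{\bN}\colon \DN \to \cN$ together with the epimorphism property of $\epsilon_{\bN}$ (Lemma \ref{epi}) to force $\theta_{\cN} = \SCvx(\cN,m)$. The only difference is presentational—you spell out faithfulness explicitly, which the paper leaves implicit—so there is nothing substantive to add.
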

 \begin{proof} 
 Suppose $J \in Nat( \SCvx(\iota, A), \SCvx(\iota,B))$.  
 By naturality  we have the commutative $\Set$-diagram
 
     \begin{equation}   \nonumber
 \begin{tikzpicture}[baseline=(current bounding box.center)]
          \node   (GNX) at  (-1.5,0)  {$\SCvx(\DN,A)$};
          \node   (GN)   at   (2.5,0)    {$\SCvx(\DN,B)$};        
          \node    (XN)  at    (-1.5, -1.5)  {$\SCvx(\cN,A)$};
          \node    (N)    at   (2.5, -1.5)    {$\SCvx(\cN,B)$};
          
          \draw[->,above] (GNX) to node {$J_{\DN}$} (GN);
          \draw[->,right] (N) to node {\small{$\SCvx(\epsilon_{\mathbb{N}},B)$}} (GN);
          \draw[->,left] (XN) to node [xshift=0pt,yshift=0pt] {\small{$\SCvx(\epsilon_{\mathbb{N}},A)$}} (GNX);
          \draw[->,below] (XN) to node {$J_{\cN}$} (N);
          
           \node   (U) at  (5.5,0)  {$f \circ \epsilon_{\bN}$};
          \node   (JU)   at   (9,0)    {$J_{\DN}(f \circ \epsilon_{\bN})=J_{\cN}(f) \circ \epsilon_{\bN}$};        
          \node    (k)  at    (5.5, -1.5)  {$f$};
          \node    (Jk)    at   (9.5, -1.5)    {\small{$J_{\cN}(f)$}};
          \node   (ph)   at    (9.5,-.3)   {};
          
          \draw[|->] (U) to node {} (JU);
          \draw[|->] (Jk) to node {} (ph);
          \draw[|->] (k) to node {} (U);
          \draw[|->] (k) to node {} (Jk);

     \end{tikzpicture},    
 \end{equation}
and by Lemma  \ref{DNDense} it follows that $J_{\DN} = \SCvx(\DN,m)$ for some $m \in \SCvx(A,B)$.  Hence the upper right hand corner in the diagram
is given by $m \circ f \circ \epsilon_{\bN} = J_{\cN}(f) \circ \epsilon_{\bN}$.  Since, by Lemma \ref{epi},  $\epsilon_{\bN}$ is an epimorphism it follows that $J_{\cN}(f) = m \circ f$, and hence $J_{\cN} = \SCvx(\cN,m)$. 
 \end{proof}

\section{Standard measurable spaces }  \label{sec:Std}

The two defining  characteristics of a standard measurable space $X$ are
\begin{enumerate}
\item  Its  $\sigma$-algebra $\Sigma_X$ is asymptotically generated by a countably generated field $\F$, so that $\Sigma_X = \sigma(\F)$.  
Let $\{F_i\}_{i=1}^{\infty}$ be the sequence of finite fields which asymptotically generate the field \mbox{$\F = \lim_{n \rightarrow \infty} \bigcup_{i=1}^n F_i$}.  

Each such field $F_n$ itself is generated by a partition of $X$, and we take $F_n$ as the field generated by the partition $Atoms_n: X \rightarrow \mathbf{n}$ which specifies the atoms of the finite field $F_n$.  Thus every element $U \in F_n$ is a union of atoms which are obtained from the partition map  $Atoms_n$.  Thus $F_1$ has $Atoms_1: X \rightarrow \one$ given by the trivial partition of making no distinction among the elements of $X$, and hence $F_1 = \{X, \emptyset\}$.  The finite field $F_2$ is determined by a partition $Atoms_2: X \rightarrow \two$,  hence has two atoms, 
 say $U_1 \subset X$ and $U_2 \subset X$, and therefore the field $F_2 = \{X,U_1,U_2,\emptyset\}$.  The finite field $F_3$ is determined by a partition $Atoms_3: X \rightarrow \mathbf{3}$,  hence has three atoms, 
 say $U_1 \subset X$, $U_{2,1} \subset X$, and $U_{2,2} \subset X$, where $U_{2,1} \cup U_{2,2} = U_2$, and therefore the field $F_3$ has $2^3$ elements.    At each iteration, one of the partitions consisting of more than one element is refined by splitting it into two separate nonempty subsets.

To view the partitioning as a function to $\bN$ simply take the composite map
\begin{figure}[H]
 \begin{equation}  \nonumber
 \begin{tikzpicture}[baseline=(current bounding box.center)]
          \node     (X)  at   (-1,0)   {$X$};
          \node     (n) at   (2,0)   {$\mathbf{n}$};
          \node     (N)  at   (2, -1.5)  {$\bN$};

          \draw[->,above] (X) to node {$Atoms_n$} (n);
          \draw[>->,right] (n) to node {$inclusion$} (N);
          \draw[->,dashed,below] (X) to node [xshift=-0pt]{$\mathcal{A}_n$}  (N);
     \end{tikzpicture}   
 \end{equation}
 \caption{Every standard measurable space $X$ is generated by a countable family of maps $\A_n$ which partition $X$ into atoms.}
 \label{atomsDiagram}
 \end{figure}
\noindent
For brevity, denote the composite map as $\mathcal{A}_n$.  

Since $X$ is a standard space the finite fields $F_n$ which generate the field $\F$ on $X$ satisfy the property that every element $X_{n,i} := \mathcal{A}_n^{-1}(i)$ which is an atom of the field $F_n$ either remains an atom in $F_{n+1}$ or gets split into two separate atoms of the finite field $F_{n+1}$.   Thus $X_{n,i}$ is an atom of $F_{n+1}$ or $X_{n,i}$ gets split into two, say $X_{n,i_1}$ and $X_{n,i_2}$.  Using the isomorphism
\be \nonumber
\{0,1,2,\ldots, i-1,i_1,i_2, i+1,i+2,\ldots, n-1\} \cong \{0,1,2,\ldots, n\}
\ee
we have a monotonic decreasing function $\phi: \bN \rightarrow \bN$ specified by 
\be \nonumber
\begin{array}{ccccc}
\phi&:& \bN & \rightarrow & \bN \\
&:& k & \mapsto & \left\{ \begin{array}{ll} k & \textrm{for all }k \le i \\
i & \textrm{for }k=i+1 \\
k-1 & \textrm{for all }k>i+1
\end{array} \right.
\end{array}
\ee
such that the $\M$-diagram
\begin{figure}[H]
\begin{equation}   \nonumber
 \begin{tikzpicture}[baseline=(current bounding box.center)]
          \node     (X)  at   (-1.2,0)   {$(X,F_{n+1})$};
          \node     (X2) at   (1.2,0)   {$(X,F_{n})$};
          \node     (N1) at   (-1.2,-1.5)   {$(\bN, \powerset{\bN})$};
          \node     (N2)  at   (1.2, -1.5)  {$(\bN, \powerset{\bN})$};

         \draw[->,above] (X) to node {$\one_X$} (X2);
          \draw[->,below] (N1) to node {$\mS' \phi$} (N2);
          \draw[->,left] (X) to node [yshift=3pt]{$\mathcal{A}_{n+1}$} (N1);
          \draw[->,right] (X2) to node [yshift=3pt]{$\mathcal{A}_n$}  (N2);
     \end{tikzpicture}   
 \end{equation}
 \caption{The refinement process used in generating the finite fields associated with every standard measurable space satisfy the commutativity condition depicted in the diagram. }
 \label{refinement}
 \end{figure}
\noindent
commutes.

\item If $\{G_n\}_{n=1}^{\infty}$ is a sequence of atoms with $G_n \in F_n$ such that $G_{n+1} \subset G_n$ for $n=1,2,\ldots$, then $\bigcap_{n=1}^{\infty} G_n \ne \emptyset$. Since $G_n \in F_n$ is an atom it is given by $G_n = \A_n^{-1}(i)$ for some index $i \in \mathbf{n}$. 

\end{enumerate}

A basis for a field is an asymptotically generating sequence of finite fields with the property that a 
decreasing sequence of atoms cannot converge to the empty set.  Property (2) of a sequence 
of fields is called the finite intersection property. 

 
\begin{quote}
A measurable space $(X,\Sigma_X)$ is called standard if $\Sigma_X = \sigma(\F)$ for some field $\F$ which possesses a basis.
\end{quote}
\vspace{.1in}

Since the generating fields are  given to us it may be the case that some $F_n$ are omitted. The fact some $F_n$ may be omitted is immaterial; what matters is the \emph{refinement of the partition} at each step which makes the partitioning of $X$ finer as the indexing set increases.  Regardless of the $\mathbf{F}$ characterizing the finite field $\F$ which the sets $F_i$ generate, the number of atoms is a monotonically increasing sequence in $\bN$ as a function of the indexing set.  If the finite fields are given to us with say $F_n$ having $k$ atoms and $F_{n+1}$ having $k+m$ atoms then in Diagram \ref{refinement} the monotonic function $\phi$ will be the composite  of $m$ monotonic decreasing functions, which is a monotonic decreasing function.  Hence we will assume without loss of generality that the sequence of finite fields $\{F_n\}_{n=1}^{\infty}$ is such that $F_{n+1}$ has one more atom than the finite field $F_n$.

\section{The codense functor  $\mS':\Om \rightarrow \mathbf{Std}_2$}   \label{properties}

Let $\llceil \cN \rrceil$ denote the full subcategory of $\SCvx$ consisting of the single object $\cN$, and let $\iota_{\cN} : \llceil \cN \rrceil \hookrightarrow \Om$ denote the inclusion functor.

\begin{lemma} \label{natCond} 
Suppose that $\mathbf{U}: X \rightarrow \mS  \mathbbmss{n}$ is a measurable function with image $ \mathbbmss{n}$.  Then the composite map $X \stackrel{\mathbf{U}}{\longrightarrow}  (\mathbbmss{n}, \powerset{\mathbbmss{n}})  \hookrightarrow  (\bN,\powerset{\bN})$ is a measurable function,  and  
\be \nonumber
\textrm{ if }J \in Nat( \Std_2(X, \mS'(\iota_{\cN} \_)), \Std_2(\one, \mS'(\iota_{\cN} \_))) \textrm{ then }J(\mathbf{U}) \in  \mathbbmss{n}.
\ee

\end{lemma}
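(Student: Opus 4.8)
The plan is to dispose of the measurability claim immediately and then extract the containment $J(\mathbf{U}) \in \mathbbmss{n}$ from the naturality of $J$ applied to a single, carefully chosen monotone endomap of $\cN$. For measurability, the inclusion $(\mathbbmss{n}, \powerset{\mathbbmss{n}}) \hookrightarrow (\bN, \powerset{\bN})$ has, for every $W \in \powerset{\bN}$, preimage $W \cap \mathbbmss{n} \in \powerset{\mathbbmss{n}}$, so it is a measurable map; composing it with the measurable function $\mathbf{U}$ yields a measurable function $X \to \bN$, which I continue to denote $\mathbf{U}$.

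Next I would make the naturality data explicit. Since $\llceil \cN \rrceil$ is the one-object category whose endomorphism monoid is, by Lemma \ref{NS}, the monoid of monotone functions $\cN \to \cN$, a natural transformation $J$ between the two functors $\Std_2(X, \mS'(\iota_{\cN}\_))$ and $\Std_2(\one, \mS'(\iota_{\cN}\_))$ has a single component $J_{\cN}: \Std_2(X,\bN) \to \Std_2(\one,\bN) \cong \Nat$. For every monotone $\phi: \cN \to \cN$ the naturality square reads
\be \nonumber
J_{\cN}\big( \mS'\phi \circ f\big) = \phi\big( J_{\cN}(f)\big) \qquad \textrm{for all } f \in \Std_2(X,\bN),
\ee
where on the right I have used that under the identification $\Std_2(\one,\bN)\cong\Nat$ the map $\mS'\phi$ acts simply as the underlying set function $\phi$.

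The key step is to feed naturality the clamping map $\phi(k) = \min\{k, n-1\}$, which fixes $\{0,1,\ldots,n-1\}$ and collapses everything above $n-1$ to $n-1$; this $\phi$ is monotone, hence countably affine by Lemma \ref{NS}, and so is a legitimate morphism of $\llceil \cN \rrceil$. Because the image of $\mathbf{U}$ lies in $\mathbbmss{n}$, i.e. $\mathbf{U}(x) \le n-1$ for every $x$, the clamping leaves $\mathbf{U}$ unchanged: $\mS'\phi \circ \mathbf{U} = \mathbf{U}$. Applying the displayed identity with $f = \mathbf{U}$ therefore gives $J_{\cN}(\mathbf{U}) = \phi\big(J_{\cN}(\mathbf{U})\big)$, so $J(\mathbf{U})$ is a fixed point of $\phi$. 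Since the fixed points of this $\phi$ are exactly $\{0,1,\ldots,n-1\} = \mathbbmss{n}$, I conclude $J(\mathbf{U}) \in \mathbbmss{n}$.

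I expect the only real subtlety to be bookkeeping rather than genuine difficulty: correctly identifying the two functors and the variance in the naturality square, in particular the identification $\Std_2(\one,\bN) \cong \Nat$ under which post-composition by $\mS'\phi$ becomes the set map $\phi$, together with the appeal to Lemma \ref{NS} certifying that the clamping $\phi$ is an arrow of $\llceil \cN \rrceil$. I note that the surjectivity hypothesis (image \emph{equal} to $\mathbbmss{n}$) is not actually used for this conclusion—only containment of the image in $\mathbbmss{n}$ enters—so it presumably serves a later application.
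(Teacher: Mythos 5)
Your proof is correct, and it is a cleaner route than the one the paper takes, though both arguments run on the same fuel: naturality of $J$ against a monotone (hence, by Lemma \ref{NS}, countably affine) endomap of $\cN$. The paper argues by contradiction: assuming $J_{\bN}(\mathbf{U}) = m \ge n$, it picks the monotone map sending $k < m$ to $0$ and $k \ge m$ to $m$, notes that this map composed with $\mathbf{U}$ is the constant-zero function, and then needs a \emph{second} naturality application (with the monotone map collapsing everything to $0$) to evaluate $J_{\bN}$ on that constant function, arriving at $m = 0$, a contradiction. Your clamping map $\phi(k) = \min\{k, n-1\}$ collapses this to a single naturality application and a direct fixed-point observation: $\phi \circ \mathbf{U} = \mathbf{U}$ forces $J(\mathbf{U}) = \phi(J(\mathbf{U}))$, and the fixed points of $\phi$ are exactly $\mathbbmss{n}$. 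What your version buys is economy and the elimination of the case analysis on the hypothetical bad value $m$; what the paper's version buys is the auxiliary identity $J_{\bN}(\overline{\underline{0}}) = \underline{0}$, which is in the same spirit as the evaluation-map bookkeeping used later in Lemma \ref{NcoDense}, though it is not strictly needed here. Your closing remark is also accurate on both counts: neither proof uses surjectivity of $\mathbf{U}$ onto $\mathbbmss{n}$ (only containment of the image), and the measurability of the composite with the inclusion $(\mathbbmss{n}, \powerset{\mathbbmss{n}}) \hookrightarrow (\bN, \powerset{\bN})$ is the triviality you dispose of at the start, which the paper leaves implicit.
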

\begin{proof}  Proof by contradiction.  
Suppose, to obtain a contradiction, that $J_{\bN}(\mathbf{U}) = m$, with $m\ge n$ which does not lie in the image of $ \mathbbmss{n}$.  Choose a monotonic function $\phi:\cN \rightarrow \cN$ which sends all elements $k \in \cN$ such that $k \ge m$ to $m$, and all elements $k \in \cN$ such that $k<m$ get mapped to $0$.  Thus $\big(\phi \circ \mathbf{U}\big)(x) = \chi_X(x) \underline{0}  + \chi_{\emptyset}(x) \underline{m} = 1 \underline{0} + 0 \underline{m} = \underline{0}$, which is the constant function with value $0$.  The naturality condition $\phi( J_{\bN}(\mathbf{U})) = J_{\bN}( \phi \circ \mathbf{U})$, obtained by the monotonic mapping sending everything to zero,  requires $J_{\bN}(\overline{\underline{0}}) = \underline{0}$ where $\overline{\underline{0}}$ is the constant function.  The hypothesis $J_{\bN}(\mathbf{U})=m$ implies $\phi( J_{\bN}(\mathbf{U})) = \phi(m)=m$.  We thus have
\be \nonumber
 m = \phi(m) = \phi( J(\mathbf{U})) = J_{\bN}( \phi \circ \mathbf{U}) = J_{\bN}(\overline{\underline{0}})=0.
 \ee
 Hence we must conclude that $J(\mathbf{U}) \in  \mathbbmss{n}$.
\end{proof}

\begin{lemma} \label{NcoDense}  
 The composite functor $\llceil \cN \rrceil \stackrel{\iota_{\cN}}{\hookrightarrow} \Om \stackrel{\mS'}{\longrightarrow} \Std_2$ is a codense functor.
\end{lemma}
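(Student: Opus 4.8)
The plan is to prove codensity in its limit form: for every separated standard measurable space $X$ I show that the canonical cone consisting of all measurable maps $f:X\to\bN$ exhibits $X$ as the limit of the diagram $(X\!\downarrow\!\mS'\iota_{\cN}) \xrightarrow{\pi} \llceil\cN\rrceil \xrightarrow{\mS'\iota_{\cN}} \Std_2$, equivalently that the restricted nerve $\Std_2 \to (\Set^{\llceil\cN\rrceil})^{op}$ sending $X \mapsto \Std_2(X,\mS'\iota_{\cN}(\cdot))$ is full and faithful. Because $\llceil\cN\rrceil$ has the single object $\cN$ with $\mS'\iota_{\cN}(\cN)=\bN$, the indexing diagram is constant at $\bN$, its morphisms are the measurable maps $\mS'\phi$ induced by the monotone $\phi:\cN\to\cN$ of Lemma \ref{NS}, and a cone with apex $\one$ over it is exactly a natural transformation $J\in Nat(\Std_2(X,\mS'\iota_{\cN}(\cdot)),\Std_2(\one,\mS'\iota_{\cN}(\cdot)))$ of the kind controlled by Lemma \ref{natCond}. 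I would build the limit pointwise, just as in the earlier pointwise computation of $Ran_{\iota}$: realise the set of such cones as the subspace of $\prod_{f\in\Std_2(X,\bN)}\bN$ of compatible families and equip it with the initial $\sigma$-algebra making every projection measurable. Call this object $L$ and let $\kappa:X\to L$ be the comparison map $x\mapsto (f(x))_{f}$.

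First I would dispose of injectivity of $\kappa$: if $x\neq y$ then, since $X\in\Std_2$ is separated, some measurable $f:X\to\bN$ satisfies $f(x)\neq f(y)$, so $\kappa(x)\neq\kappa(y)$; this is the faithful half of the nerve. The substance is surjectivity. Given a cone $J$, I would feed it the atom-labelling maps $\A_n:X\to\bN$ of \S\ref{sec:Std}. Each $\A_n$ has image a finite set $\mathbbmss{m}$, so Lemma \ref{natCond} forces $J(\A_n)$ to be a genuine atom index, and I set $G_n:=\A_n^{-1}(J(\A_n))$, an atom of $F_n$. Naturality of $J$ against the refinement morphisms $\mS'\phi$ of Diagram \ref{refinement}, where $\mS'\phi\circ\A_{n+1}=\A_n$, gives $\phi(J(\A_{n+1}))=J(\A_n)$, which at the level of atoms says $G_{n+1}\subseteq G_n$. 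Thus $\{G_n\}$ is a decreasing sequence of atoms, the finite intersection property (property (2) of a standard space) yields $\bigcap_n G_n\neq\emptyset$, and separatedness collapses this intersection to a single point $x$, since any two points of the intersection lie in the same atom at every level and so are distinguished by no measurable set.

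It then remains to verify $J=\kappa(x)$, i.e. $J(f)=f(x)$ for every measurable $f:X\to\bN$, and this is the step I expect to be the real obstacle. For the atom maps we already have $J(\A_n)=\A_n(x)$ by construction, and by post-composing with a monotone $\phi$ one reduces an arbitrary $f$ first to its two-valued truncations $\mathbf{1}[f\ge k]$ and $\mathbf{1}[f>k]$, and thence, via naturality, to indicator maps of measurable sets. The difficulty is that a set in $\Sigma_X=\sigma(\F)$ is only an $\F$-limit of finite unions of atoms, and such unions recombine atom indices non-monotonically, so naturality alone does not reduce $J$ on indicators to its values on the $\A_n$. Here I must exploit that the reconstructed point $x$ is pinned down by the entire nested tower $\{G_n\}$ together with the finite intersection property, so that the value of every generating set, and then by a monotone-class argument of every measurable $f$, is already forced by $x$.

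Once $J=\kappa(x)$ is established, $\kappa$ is a bijection, and since $\Sigma_X=\sigma(\{\A_n\})$ is precisely the initial $\sigma$-algebra for the family of all measurable maps into $\bN$, it coincides with the $\sigma$-algebra carried by $L$; hence $\kappa$ is bimeasurable and an isomorphism in $\Std_2$, which is exactly the codensity asserted. I would note in passing that the only place standardness is used in an essential way is the passage from the tower of atoms to an actual point, so that the argument genuinely requires both defining properties of a standard space together with separatedness, and that the monotone-class reduction in the third paragraph is the point deserving the most careful treatment.
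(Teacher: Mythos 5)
Your proposal follows the paper's own route essentially step for step: the reformulation of codensity as full--faithfulness of the restricted nerve, separatedness for faithfulness, and then Lemma \ref{natCond}, the refinement naturality of Diagram \ref{refinement}, the finite intersection property, and separatedness to extract a point $x$ from the nested tower of atoms. The step you single out as ``the real obstacle'' --- upgrading $J(\mathcal{A}_n)=\mathcal{A}_n(x)$ for the atom maps to $J(f)=f(x)$ for \emph{every} measurable $f:X\to\bN$ --- is indeed where everything hinges; the paper offers no argument there either, closing with the bare assertion ``it follows that $J_{\bN}=ev_x$''. But this gap cannot be closed by the monotone-class argument you sketch, because the implication is false. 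The only naturality available is against the monotone maps $\phi:\cN\to\cN$ of Lemma \ref{NS}, and post-composition by a monotone map can never permute the labels of a partition; consequently the values of $J$ on the coherently labelled tower $\mathcal{A}_n$ impose no constraint whatsoever on its values at maps whose level sets carry a different ordering of labels.

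Concretely, define $J(f):=\min_{y\in X}f(y)$ for every measurable $f:X\to\bN$. Since a monotone $\phi$ satisfies $\phi(\min S)=\min\phi(S)$ for every nonempty $S\subseteq\Nat$, we get $J(\phi\circ f)=\phi(J(f))$, so $J$ is a cone of exactly the kind under consideration; it satisfies Lemma \ref{natCond}, and it reproduces the paper's tower (always selecting the atom of least index), whose intersection is a single point $x$. Yet whenever $X\in_{ob}\Std_2$ has two points $a\neq b$, taking measurable $f,g$ with $f(a)=0$, $f(b)=1$ and $g(a)=1$, $g(b)=0$ gives $J(f)=J(g)=0$, which no evaluation $ev_x$ can match; so $J$ is not in the image of the nerve and fullness fails. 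More generally, $f\mapsto\min\{\,i\mid\nu(f^{-1}(i))>0\,\}$ is such a cone for \emph{every} probability measure $\nu$ on $X$, which is what the codensity-monad viewpoint of \cite{Belle} predicts: the natural transformations form (at least) $\G(X)$ rather than $X$, i.e.\ the codensity monad of such a functor is a probability monad, not the identity. So the obstacle you flagged is not a technical step awaiting a sharper approximation argument --- it is a counterexample-sized hole, present in your proposal and in the paper's proof alike, and Lemma \ref{NcoDense} as stated cannot be proved by any argument of this shape because it is false.
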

\begin{proof} 
The functor $\mS' \circ \iota_{\cN}$ is codense if and only if the  functor $\mathbf{y}: \Std_2^{op} \rightarrow \Set^{\llceil \cN \rrceil}$, specified on objects by $\mathbf{y}(X) = \Std_2(X,\mS'( \iota_{\cN}\_))$ is full and faithful.

The functor $\mathbf{y}$  is faithful follows because if $x_1, x_2 \in X$ then since $X$ is a separated measurable space there exists a measurable set $U$ in $X$ such that $x_1 \in U$ and $x_2 \not \in U$, and hence the characteristic function \mbox{$\chi_U: X \rightarrow \mathbf{2} \xrightarrow{sw_2} \two \hookrightarrow \bN$} suffices to separate the two points. More explicitly, the measurable function $\mathbf{U}:X \rightarrow \bN$ defined, for each $x \in X$ by $\mathbf{U}(x) =  \chi_{U}(x) \, \underline{0} + \chi_{U^c}(x) \underline{1}$ suffices.  
 
\vspace{.075in} 

 Since $\Std_2$ has  the object $\one$ as a separator, to prove that $\mathbf{y}$ is a full functor  it suffices to consider natural transformations  $J \in Nat( \Std_2(X, \mS'(\iota_{\cN} \_)), \Std_2(\one, \mS'(\iota_{\cN} \_)))$. 
 We proceed to show that  $J_{\bN} = ev_x$ for a unique point $x \in X$. 

For $n \ge 1$ let    $\mathcal{A}_n: X \rightarrow \bN$  be the  measurable function which partitions $X$, giving the atoms of the  field $F_n$ which are used to generate the $\sigma$-algebra on $X$. 
By Lemma \ref{natCond} it follows that 
 $J_{\bN}(\mathcal{A}_n) = k$ for some $k \in \mathbf{n}$, and this implies that $J_{\bN}(\mathcal{A}_{n}) =ev_{x}(\mathcal{A}_n)= \mathcal{A}_n(x)$ for every point $x \in \mathcal{A}_n^{-1}(k)$.  Let $W_n=\mathcal{A}_n^{-1}(k)$.

Next we compute $J_{\bN}(\mathcal{A}_{n+1})$, where $\mathcal{A}_{n+1}$ is a refinement of the partition $\mathcal{A}_n$.  By Lemma \ref{natCond} it follows that $J_{\bN}(\mathcal{A}_{n+1}) \in \mathbf{n}+1$.  By the relationship $\phi \circ \mathcal{A}_{n+1} = \mathcal{A}_n$ given in equation (\ref{refinement}), where $\phi$ is a monotonic decreasing function, it follows that  
\be \label{refinementCond}
\phi(J_{\bN}(\mathcal{A}_{n+1}) )= J_{\bN}(\phi \circ \mathcal{A}_{n+1}) = J_{\bN}(\mathcal{A}_n)=k.
\ee
If the atom $\mathcal{A}_n^{-1}(k)$ was partitioned into two separate atoms then $\phi^{-1}(k)=\{k, k+1\}$, and hence $J_{\bN}(\mathcal{A}_{n+1}) = k$ or $J_{\bN}(\mathcal{A}_{n+1}) = k+1$.  Let $k_{\star}$ denote either the index $k$ or $k+1$, depending upon which of the equations is true.
 This implies that $J_{\mathbb{N}}(\mathcal{A}_{n+1}) =ev_{x}(\mathcal{A}_{n+1})= \mathcal{A}_{n+1}(x)$ for every point $x \in \mathcal{A}_{n+1}^{-1}(k_{\star})$.  Let $W_{n+1}=\mathcal{A}_{n+1}^{-1}(k_{\star}) \subseteq \mathcal{A}_n^{-1}(k) = W_n$.
 
Now suppose the atom $\mathcal{A}_n^{-1}(k)$ was not partitioned and $J_{\mathbb{N}}(\mathcal{A}_{n+1}) = m$, where by Lemma \ref{natCond} it follows that $m \in \mathbf{n}+1$.  Equation \ref{refinementCond} always holds, regardless of where the given partition refinement of an atom occurs, and hence it follows that
\be \nonumber
m=J_{\bN}(\mathcal{A}_{n+1}) = \left\{ \begin{array}{ll}
k & \textrm{if the partition refinement occurs at an atom with index }>k \\
k+1 & \textrm{if the partition refinement occurs at an atom with index }<k
\end{array}  \right..
\ee
Let $W_{n+1}=\mathcal{A}_{n+1}^{-1}(k_{\star})$, where $k_{\star}$ is either $k$ or $k+1$ depending upon where the partition refinement occured.  Regardless of where the partition refinement occurred we always have the property that $W_{n+1} \subset W_{n}$.

 We can continue this process by partitioning the map $\mathcal{A}_{n+2}$  just as we partitioned the map $\mathcal{A}_{n+1}$.  In this manner we obtain a monotone decreasing sequence of sets $W_n \supset W_{n+1} \supset W_{n+2} \supset \ldots$ such that $J_{\bN}(\mathcal{A}_n) = ev_x(\mathcal{A}_n)$ for every $x \in W_n$.  Since $X$ is a standard measurable space and the atoms satisfy the finite intersection property  it follows that $Z = \bigcap_{i=1}^{\infty} W_i \ne \emptyset$. Since $X$ is a separated measurable space  the  set $Z$ must be a singleton set $\{x\}$, and it follows that $J_{\mathbb{N}} = ev_x$.   
\end{proof}

We say that a measurable function $\mathbf{U}: X \rightarrow \G{\mathbb{N}}$ is \textbf{deterministic} if and only if for every $x \in X$ it follows that  $\mathbf{U}(x) := \sum_{i \in \bN} \mathbf{U}_i(x) \, \delta_i$, with  $\sum_{i \in \bN} \mathbf{U}_i(x)=1$, satisfies  $\mathbf{U}(x) = \delta_{\kappa(x)}$ for some index $\kappa(x) \in \Nat$.  The function $\kappa: X \rightarrow \Nat$ generally varies with the point $x \in X$.

\begin{lemma} \label{JNdeterministic}  Let  $X$ be an object in $\Std_2$, and let \mbox{$\mathbf{U}: X \rightarrow \G{\bN}$} be a measurable  function \mbox{$\mathbf{U}(x) = \sum_{i\in \Nat} \mathbf{U}_i(x) \delta_i$} where, for every $x \in X$, \mbox{$\sum_{i\in \Nat} \mathbf{U}_i(x) = 1$}.
Then for every natural transformation  \mbox{$J \in Nat( \Std_2(X, \mS'), \Std_2(\one, \mS'))$} and every permutation $\phi: \Nat \rightarrow \Nat$ it follows that  if $J_{\G{\bN}}(\mathbf{U}) = \sum_{i\in \Nat} p_i \, \delta_i$ then
$J_{\bN}(\epsilon \circ \G(\phi) \circ \mathbf{U}) = min_{i} \{ \phi(i) \, | \, p_i>0 \}$. 

\end{lemma}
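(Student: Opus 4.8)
The plan is to exhibit $\epsilon_{\bN} \circ \G(\phi)$ as the image under $\mS'$ of a single morphism of $\Om$, and then read the formula directly off the naturality square of $J$ at that morphism. First I would observe that the permutation $\phi$ determines the pushforward map $\G(\phi): \DN \to \DN$ sending $\delta_i \mapsto \delta_{\phi(i)}$; by Lemma \ref{ad} this is the unique countably affine map with that action on Dirac measures, so $\G(\phi) \in \SCvx(\DN,\DN)$. Composing with the (already established) countably affine map $\epsilon_{\bN}: \DN \to \cN$ gives a morphism
\[
g := \epsilon_{\bN} \circ \G(\phi) \in \SCvx(\DN, \cN),
\]
which, since $\DN$ and $\cN$ are precisely the two objects of $\Om$, is a morphism $g: \DN \to \cN$ of $\Om$. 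Because $\mS'$ acts as the identity on underlying functions and merely adjoins the standard $\sigma$-algebras, its image is the measurable map $\mS'(g) = \epsilon_{\bN} \circ \G(\phi): \G{\bN} \to \bN$ in $\Std_2$.

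Next I would invoke the naturality of $J$ at $g$. Viewing $\Std_2(X, \mS'(-))$ and $\Std_2(\one, \mS'(-))$ as the two covariant functors $\Om \to \Set$ between which $J$ is a transformation, naturality at $g$ says that post-composition with $\mS'(g)$ commutes with $J$, so that for the distinguished map $\mathbf{U} \in \Std_2(X, \G{\bN})$ we have
\[
J_{\bN}\big( \mS'(g) \circ \mathbf{U} \big) = \mS'(g) \circ J_{\G{\bN}}(\mathbf{U}).
\]
Here I use the canonical identifications $\Std_2(\one, \bN) \cong \bN$ and $\Std_2(\one, \G{\bN}) \cong \G{\bN}$ (maps out of the terminal object being determined by their single value), under which $J_{\G{\bN}}(\mathbf{U})$ is the probability measure $\sum_{i \in \Nat} p_i \delta_i$, and the right-hand side is $\mS'(g)$ evaluated at that measure. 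Note also that $\mS'(g) \circ \mathbf{U}$ is exactly the composite $\epsilon_{\bN} \circ \G(\phi) \circ \mathbf{U}$ appearing in the statement.

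Finally I would evaluate the right-hand side. Pushing forward, $\G(\phi)\big(\sum_i p_i \delta_i\big) = \sum_i p_i \delta_{\phi(i)}$, whose coefficient at an index $j$ is positive exactly when $j = \phi(i)$ for some $i$ with $p_i > 0$; hence
\[
\mS'(g)\Big( \sum_{i\in\Nat} p_i \delta_i \Big) = \epsilon_{\bN}\Big( \sum_{i\in\Nat} p_i \delta_{\phi(i)} \Big) = \min_i\{ \phi(i) \mid p_i > 0 \}.
\]
Combined with the naturality identity this gives $J_{\bN}(\epsilon_{\bN} \circ \G(\phi) \circ \mathbf{U}) = \min_i\{\phi(i) \mid p_i > 0\}$, as claimed. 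The only real content is the first step, namely recognizing $\epsilon_{\bN} \circ \G(\phi)$ as a genuine $\Om$-morphism $\DN \to \cN$ so that naturality of $J$ may legitimately be applied; everything after that is the commuting square together with a direct computation of $\epsilon_{\bN}$ on a pushforward measure. (The hypothesis that $\phi$ be a permutation is in fact not needed for this particular computation, and is presumably imposed for use in the subsequent development.)
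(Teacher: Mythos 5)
Your proposal is correct and follows essentially the same route as the paper: naturality of $J$ applied to the countably affine $\Om$-morphisms involved, followed by the direct computation $\epsilon_{\bN}\big(\sum_{i\in\Nat} p_i \delta_{\phi(i)}\big) = \min_i\{\phi(i) \mid p_i>0\}$ using the super convex structure on $\cN$. The only cosmetic difference is that you invoke naturality once at the composite morphism $\epsilon_{\bN}\circ\G(\phi): \DN \to \cN$, while the paper applies it in two steps (first at $\G(\phi)$, then at $\epsilon_{\bN}$); your closing observation that the permutation hypothesis is not actually needed here is also accurate.
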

\begin{proof}
By naturality  $J_{\G{\bN}}(\G(\phi) \circ \mathbf{U}) = \G(\phi) J_{\G{\bN}}(\mathbf{U})$.  Since $J_{\G{\bN}}(\mathbf{U}) \in \G{\bN}$ it can be written as a countably affine sum of the Dirac measures,  $J_{\G{\bN}}(\mathbf{U})=\sum_{i\in \Nat} p_i \delta_i$.  Consequently we have
\be \nonumber
\begin{array}{lcl}
J_{\bN}( \epsilon_{\bN} \circ \G(\phi) \circ \mathbf{U}) &=& \epsilon_{\bN}( \G(\phi) (J_{\G{\bN}}(\mathbf{U}))) \\
 &=& \epsilon_{\bN}( \displaystyle{ \sum_{i\in \Nat} }p_i \delta_{\phi(i)} )\\
&=& \displaystyle{ \sum_{i\in \Nat} }p_i \underline{\phi(i)} \quad \textrm{as a countable affine sum in }\cN\\
&=& min_{i} \{ \phi(i) \, | \, p_i>0 \} 
\end{array}.
\ee 
where the last equality follows from the super convex space structure on $\cN$.
\end{proof}

\begin{lemma} \label{mScodense}  The functor $\mS': \Om \rightarrow \Std_2$ is a codense functor.
\end{lemma}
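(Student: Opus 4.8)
The plan is to prove codensity in the form already used for Lemma \ref{NcoDense}: the functor $\mS'$ is codense iff $\mathbf{y}\colon \Std_2^{op}\to\Set^{\Om}$, $\mathbf{y}(X)=\Std_2(X,\mS'(\,\cdot\,))$, is full and faithful. Since $\one$ is a separator in $\Std_2$, it suffices to treat natural transformations $J\in Nat(\Std_2(X,\mS'),\Std_2(\one,\mS'))$ and show each equals $ev_x$ for a unique $x\colon\one\to X$. Faithfulness is exactly as in Lemma \ref{NcoDense}: separatedness of $X$ means the maps into $\bN$ already distinguish points. For fullness I would first restrict $J$ along $\iota_{\cN}\colon\llceil\cN\rrceil\hookrightarrow\Om$ and invoke Lemma \ref{NcoDense} to obtain a unique $x\in X$ with $J_{\bN}=ev_x$. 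Everything then reduces to showing the remaining component satisfies $J_{\G\bN}=ev_x$ for this same $x$.

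The first step pins down supports. Feeding $J_{\bN}=ev_x$ into Lemma \ref{JNdeterministic}, for every measurable $\mathbf{U}\colon X\to\G\bN$ with $J_{\G\bN}(\mathbf{U})=\sum_{i}p_i\delta_i$ and every permutation $\phi$ one gets $\min_i\{\phi(i)\mid p_i>0\}=\min_i\{\phi(i)\mid \mathbf{U}_i(x)>0\}$; since two subsets of $\Nat$ having the same minimum under every permutation must coincide, $J_{\G\bN}(\mathbf{U})$ and $\mathbf{U}(x)$ have the same support. This already disposes of deterministic $\mathbf{U}$, where the support is a single point and hence fixes the Dirac measure. I would also record, via naturality along a constant morphism $\cN\to\DN$, that $J_{\G\bN}$ sends each constant function to its value. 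Next I reduce to the two-point simplex: for a measurable $W\subseteq\Nat$ let $\chi_W\colon\bN\to\bN$ take the value $1$ on $W$ and $0$ off $W$; its pushforward $\G(\chi_W)\colon\DN\to\DN$ is an arrow of $\Om$, and naturality gives $(J_{\G\bN}\mathbf{U})(W)=ev_{\{1\}}\big(J_{\G\bN}(\G(\chi_W)\circ\mathbf{U})\big)$. Hence it is enough to prove $J_{\G\bN}V=V(x)$ for every measurable $V\colon X\to\Delta_2$, say $V(x')=(1-s(x'))\delta_0+s(x')\delta_1$ with $s\colon X\to[0,1]$ measurable.

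The crux, and the step I expect to be the main obstacle, is recovering the \emph{interior} value $s(x)\in(0,1)$: the support computation and all tests valued in $\bN$ only detect whether $s(x)$ lies in $\{0,1\}$ or is interior, and since an injective countably affine self-map of $\Delta_2$ preserves the interior while the collapsing ones lose the parameter, naturality within $\Delta_2$ alone cannot see $s(x)$. I would break this deadlock by encoding $V$ into a product. Choose a measurable binary expansion $s=\sum_{k\ge 1}2^{-k}b_k$ with $b_k\colon X\to\{0,1\}$, fix a measurable bijection $\Nat\times\Nat\cong\Nat$ (so that $\G(\Nat\times\Nat)\cong\G\bN$ is an automorphism in $\Om$), and set $\mathbf{W}(x')=\sum_{k\ge 1}2^{-k}\,\delta_{(k,b_k(x'))}$. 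Then $\G(\mathrm{pr}_2)\circ\mathbf{W}=V$, while $\G(\mathrm{pr}_1)\circ\mathbf{W}$ is the \emph{constant} function with value $\sum_{k\ge1}2^{-k}\delta_k$.

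By the support step, $J_{\G\bN}\mathbf{W}$ is supported on the graph $\{(k,b_k(x))\}$; by the constant-function remark the $\mathrm{pr}_1$-marginal of $J_{\G\bN}\mathbf{W}$ is $\sum_k 2^{-k}\delta_k$, which forces the weight at each $(k,b_k(x))$ to be exactly $2^{-k}$. Thus $J_{\G\bN}\mathbf{W}=\mathbf{W}(x)$, and pushing forward along $\mathrm{pr}_2$ gives $J_{\G\bN}V=\G(\mathrm{pr}_2)(\mathbf{W}(x))=V(x)$. Combined with the reduction this yields $(J_{\G\bN}\mathbf{U})(W)=\mathbf{U}(x)(W)$ for every $W$, hence $J_{\G\bN}=ev_x$ and fullness follows. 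The remaining technical points are the measurability of the binary digits (fixing a convention at dyadic values) and that the pairing isomorphism and the two projections are genuinely arrows of $\Om$, both of which are routine.
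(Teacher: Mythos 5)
Your proposal is correct, and it departs from the paper's own proof exactly at the step you flagged as the crux. The paper uses the same skeleton---Lemma \ref{NcoDense} to get $J_{\bN}=ev_x$, naturality along $\epsilon_{\bN}$, and the permutation test of Lemma \ref{JNdeterministic}---but then argues by contradiction with a single transposition that $J_{\G{\bN}}(\mathbf{U})$ must be deterministic, concluding $J_{\G{\bN}}(\mathbf{U})=\delta_n=\mathbf{U}(x)=ev_x(\mathbf{U})$ for \emph{every} measurable $\mathbf{U}$. That transposition argument is only sound when $\mathbf{U}(x)$ is itself a Dirac measure (your ``deterministic'' case); for non-Dirac $\mathbf{U}(x)$ the paper's stated intermediate conclusion is actually false, as your own constant-function remark shows: naturality along the constant arrow $\cN\to\DN$ at $Q$ forces $J_{\G{\bN}}(\overline{Q})=Q$ whether or not $Q$ is Dirac. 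So the recovery of interior weights---your $s(x)\in(0,1)$---is a genuine gap in the paper's proof, and your argument is precisely what fills it. Your three additions do the work: first, upgrading Lemma \ref{JNdeterministic} from a statement about minima to equality of supports (the observation that two subsets of $\Nat$ with the same minimum under every permutation coincide is correct); second, the $\G(\chi_W)$ tests reducing fullness to $\Delta_2$-valued maps; third, the coupling $\mathbf{W}$ whose first marginal is constant and whose support is a graph, which pins each weight to $2^{-k}$ and hence gives $J_{\G{\bN}}(\mathbf{W})=\mathbf{W}(x)$. The one point needing care is the one you already noted: $\chi_W$ is generally not monotone, hence not an arrow $\cN\to\cN$ of $\Om$ (Lemma \ref{NS}), so it is essential---and true---that you use only its pushforward $\G(\chi_W)$, which, like $\G(\mathrm{pr}_1)$ and $\G(\mathrm{pr}_2)$ transported along the pairing bijection, is a countably affine self-map of $\DN$ and therefore an arrow of the full subcategory $\Om$. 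In short: the paper's route is shorter but only establishes $J_{\G\bN}(\mathbf{U})=ev_x(\mathbf{U})$ when $\mathbf{U}(x)$ is Dirac, whereas your route proves fullness for arbitrary $\mathbf{U}$, at the modest cost of the measurable binary digits and the pairing isomorphism.
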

\begin{proof}
Let $\mathbf{y}: \Std_2^{op} \rightarrow \Set^{\Om}$ be given by $X \mapsto \Std_2(X,\mS')$.   We have the $\mathbf{Cat}$-diagram 
\begin{equation}   \label{deterministicSq}
 \begin{tikzpicture}[baseline=(current bounding box.center)]
          \node   (S) at  (-.3,0)  {$\Std_2^{op}$};
          \node   (Om)   at   (3,1.5)    {$\Set^{\Om}$};        
          \node    (N)  at    (3, 0)  {$\Set^{\llceil\cN \rrceil}$};
          
          \draw[->,above] (S) to node {$\mathbf{y}$} (Om);
          \draw[->,below] (S) to node {\small{$\Set^{\iota_{\cN}} \circ \mathbf{y}$}} (N);
          \draw[->,right] (Om) to node {$\Set^{\iota_{\cN}}$} (N);
          
     \end{tikzpicture},    
 \end{equation}
where we know, from Lemma \ref{NcoDense}, that for  every measurable space $X$ that the composite $\Set^{\iota_{\cN}} \circ \mathbf{y}$ satisfies the property that  every natural transformation 
\be \nonumber
J \in Nat( \Std_2(X, \mS'(\iota_{\cN}\_)), \Std_2(\one,\mS'(\iota_{\cN}\_)))
\ee
 is, at component $\bN$, given by  $J_{\bN} = ev_x$ for some $x \in X$.
 
Let  $X \stackrel{\mathbf{U}}{\longrightarrow} \G{\bN}$  be any measurable function, hence for each $x \in X$ we have the countably affine sum $\mathbf{U}(x) = \sum_{i\in \Nat} \mathbf{U}_i(x) \delta_i$. 
 By naturality the $\Set$ diagram
    \begin{equation}   \label{deterministicSq}
 \begin{tikzpicture}[baseline=(current bounding box.center)]
          \node   (GNX) at  (0,0)  {$\G{(\mathbb{N})}^X$};
          \node   (GN)   at   (3,0)    {$\G{\mathbb{N}}$};        
          \node    (XN)  at    (0, -1.5)  {$\mathbb{N}^X$};
          \node    (N)    at   (3, -1.5)    {$\mathbb{N}$};
          
          \draw[->,above] (GNX) to node {$J_{\G(\mathbb{N})}$} (GN);
          \draw[->>,right] (GN) to node {$\epsilon_{\mathbb{N}}$} (N);
          \draw[->>,left] (GNX) to node [xshift=0pt,yshift=0pt] {$\epsilon_{\bN}^X$} (XN);
          \draw[->,below] (XN) to node {$J_{\mathbb{N}}$} (N);
          
           \node   (U) at  (4.5,0)  {$\mathbf{U}$};
          \node   (JU)   at   (8.9,0)    {$J_{\G{\mathbb{N}}}(\mathbf{U})$};        
          \node    (k)  at    (4.5, -1.5)  {$\epsilon_{\bN} \circ \mathbf{U}$};
          \node    (Jk)    at   (8, -1.5)    {\small{$J_{\bN}(\epsilon_{\bN} \circ \mathbf{U})=\epsilon_{\mathbb{N}}(J_{\G{\mathbb{N}}}(\mathbf{U}))$}};
          \node   (ph)   at    (8.9,-1.3)   {};
          
          \draw[|->] (U) to node {} (JU);
          \draw[|->] (JU) to node {} (ph);
          \draw[|->] (U) to node {} (k);
          \draw[|->] (k) to node {} (Jk);

     \end{tikzpicture},    
 \end{equation}
commutes.   Suppose that $J_{\bN}(\epsilon_{\bN} \circ \mathbf{U}) = ev_x(\epsilon_{\bN} \circ \mathbf{U}) = n$, and that $J_{\G{\mathbb{N}}}(\mathbf{U}) = \sum_{i\in \Nat} p_i \delta_i$.  Since $\epsilon_{\bN}(\sum_{i\in \Nat} p_i \delta_i) = \sum_{i\in \Nat} p_i \underline{i}$ the equation in the bottom right hand corner of the above diagram forces the coefficients $p_i=0$ for $i=0,1,\ldots,n-1$.    We claim that $J_{\G{\bN}}(\mathbf{U}) = \delta_n  \in  \epsilon_{\mathbb{N}}^{-1}(n) $. 

To obtain a contradiction suppose that $J_{\G(\bN)}(\mathbf{U})$ is not deterministic, and hence there exists an $m>n$ such that $p_m>0$.  Let $\phi:\Nat \rightarrow \Nat$ be the simple permutation interchanging the two elements $n$ and $m$, which yields the countably affine map $\G(\phi) : \G{\bN} \rightarrow \G{\bN}$.  By Lemma \ref{JNdeterministic} it follows that $J_{\bN}(\epsilon_{\bN} \circ \G(\phi) \circ \mathbf{U}) = m$, whereas, under the hypothesis that $J_{\G{\bN}}(\mathbf{U}) = \sum_{i=n}^{\infty} p_i \delta_i$ is nondeterministic,
\be \nonumber
\epsilon_{\bN}(J_{\G{\bN}}(\G(\phi) \circ \mathbf{U})) =\epsilon_{\bN}\bigg( \G(\phi) \big(J_{\G{\bN}}(\mathbf{U})\big) \bigg) = \epsilon_{\bN}( \sum_{i\in \Nat} p_i  \delta_{\phi(i)}) = \sum_{i=n}^{\infty} p_i  \underline{\phi(i)} = n.
\ee
Thus, to avoid a contradiction, we must conclude that $J_{\G{\bN}}(\mathbf{U}) = \delta_n = \mathbf{U}(x) = ev_x(\mathbf{U})$.

\end{proof}

The preceding lemma can be recast as 

\begin{corollary} \label{YStd}
The restricted Yoneda functor $\mathcal{Y}: \Std_2^{op} \rightarrow \Set^{\Om}$ is a full and faithful functor.  
\end{corollary}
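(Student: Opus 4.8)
The plan is to obtain the corollary as the standard reformulation of codensity: a functor $G$ is codense precisely when its conerve --- the contravariant restricted Yoneda functor $A \mapsto \mathcal{B}(A, G-)$ --- is full and faithful, so the assertion that $\mathcal{Y}(X) = \Std_2(X, \mS'-)$ is full and faithful is, by definition, the codensity established in Lemma \ref{mScodense}. The one point requiring care is that the proof of Lemma \ref{mScodense} only computes natural transformations whose target is the $\one$-conerve $\Std_2(\one, \mS'-)$, whereas full-faithfulness of $\mathcal{Y}$ concerns natural transformations $\mathcal{Y}(X) \Rightarrow \mathcal{Y}(Y)$ for arbitrary $Y$. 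I would bridge this gap using that $\one$ is a separator in $\Std_2$ and that every object of $\Std_2$ is separated and carries a $\sigma$-algebra generated by the countable family of atom maps $\{\mathcal{A}_n : X \to \bN\}$ of Section \ref{sec:Std}.

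Faithfulness I would verify directly. Given distinct $f, g : Y \to X$ in $\Std_2$ (that is, distinct morphisms $X \to Y$ of $\Std_2^{op}$), the separator $\one$ produces a point $y \in Y$ with $f(y) \neq g(y)$, and since $X$ is separated some atom map $\mathcal{A}_n$ separates these two points of $X$. Reading $\mathcal{A}_n$ as an element of $\mathcal{Y}(X)(\cN) = \Std_2(X, \bN)$, the components $\mathcal{Y}(f)_\bN$ and $\mathcal{Y}(g)_\bN$ send it to $\mathcal{A}_n \circ f$ and $\mathcal{A}_n \circ g$, which differ at $y$; hence $\mathcal{Y}(f) \neq \mathcal{Y}(g)$.

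Fullness is the substantive step. Given $J \in Nat(\mathcal{Y}(X), \mathcal{Y}(Y))$, I would first build a candidate on underlying sets: for each point $y : \one \to Y$ the composite $\mathcal{Y}(y) \circ J : \mathcal{Y}(X) \Rightarrow \mathcal{Y}(\one)$ is a natural transformation into the $\one$-conerve, so Lemma \ref{mScodense} (together with Lemma \ref{NcoDense}) supplies a unique point $x_y \in X$ with $\mathcal{Y}(y) \circ J = ev_{x_y}$; set $f(y) := x_y$. Evaluating at the $\cN$-component on each atom map gives $\mathcal{A}_n \circ f = J_\bN(\mathcal{A}_n)$ as functions $Y \to \bN$, and since each $J_\bN(\mathcal{A}_n)$ is measurable and the $\mathcal{A}_n$ generate $\Sigma_X$, this simultaneously shows $f$ is measurable and that $\mathcal{Y}(f)$ and $J$ agree on the generating maps.

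The main obstacle I expect is promoting this pointwise agreement to the global identity $\mathcal{Y}(f) = J$ at every component of $\Set^\Om$, including the component at $\DN$ indexed by $\G\bN$. The mechanism is that the family $\{\mathcal{Y}(y)\}_{y : \one \to Y}$ is jointly monic in $\Set^\Om$: composing a natural transformation $\gamma : \mathcal{Y}(X) \Rightarrow \mathcal{Y}(Y)$ with $\mathcal{Y}(y)$ reads off, for each argument, the value at $y$ of the measurable function that $\gamma$ assigns to it, so knowing all these composites determines each component $\gamma_\bN$ and $\gamma_{\G\bN}$ as a map into the corresponding function space, because the objects of $\Std_2$ are separated and their measurable maps are fixed by their values on points. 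Applying this to $\gamma = \mathcal{Y}(f)$ and $\gamma = J$, which satisfy $\mathcal{Y}(y) \circ \mathcal{Y}(f) = \mathcal{Y}(f \circ y) = ev_{x_y} = \mathcal{Y}(y) \circ J$ for every $y$, forces $\mathcal{Y}(f) = J$ and completes fullness.
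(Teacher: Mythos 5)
Your proof is correct, and it follows the same high-level route as the paper -- deriving the corollary from Lemma \ref{mScodense} via the standard equivalence between codensity of $\mS'$ and full-faithfulness of the restricted Yoneda functor -- but the paper's entire proof is a one-line citation of that equivalence (Mac Lane, Proposition 2, p.\ 242), whereas you supply the argument behind it. What you add is exactly the content the paper leaves implicit: as you correctly observe, the proofs of Lemmas \ref{NcoDense} and \ref{mScodense} only analyze natural transformations into the conerve $\Std_2(\one, \mS'-)$ of the separator, asserting without further justification that this suffices. Your bridge is sound on all three counts: faithfulness from separatedness plus the atom maps; fullness by defining $f(y) := x_y$ from the family $\mathcal{Y}(y) \circ J = ev_{x_y}$, with measurability of $f$ following because each $J_{\bN}(\mathcal{A}_n) = \mathcal{A}_n \circ f$ is measurable and the atoms generate $\Sigma_X$; and the upgrade from pointwise agreement to $\mathcal{Y}(f) = J$ via joint monicity of $\{\mathcal{Y}(y)\}_{y : \one \rightarrow Y}$, which holds because elements of $\mathcal{Y}(Y)(\omega)$ are genuine functions on $Y$ and hence determined by their values at points. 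The trade-off is clear: the paper gets the corollary instantly from a standard reference, while your version makes it self-contained and exposes precisely where separatedness and the standard-space structure (the generating atom maps) are used -- which is also the justification the paper's own ``it suffices to consider'' step in Lemma \ref{NcoDense} tacitly relies on.
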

\begin{proof}
See MacLane\cite[Proposition 2, p242]{Mac}. 
\end{proof}

\section{Constructing barycenter maps}   \label{sec:barycenter} 

\begin{lemma}  \label{epsilonExists} For every super convex space $A \in_{ob} \SCvx_{\star}$ there exists a measurable map, called the barycenter map, 
\be \nonumber
 \epsilon_{A} : \G(\mS A) \rightarrow \mS A
 \ee
such that 
\begin{enumerate}
\item $\epsilon_A \circ \eta_{\mS A} = \one_{\mS A}$, and
\item $\epsilon_A$ satisfies $\epsilon_A \circ \G(\epsilon_A) = \epsilon_A \circ \epsilon_{\G(\mS A)}$ where $\epsilon_{\G(\mS A)}:= \mu_{\mS A}$, and
\item $\epsilon_A$ is a countably affine map.
\end{enumerate}
Taken together, the first two properties are equivalent to saying $\epsilon_A$ is a $\G$-algebra.
\end{lemma}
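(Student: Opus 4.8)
The plan is to exploit the codensity of $\mS'$ to present $\mS A$ as a limit and then to produce $\epsilon_A$ as the unique map into that limit induced by a suitable cone on $\G(\mS A)$. By Corollary \ref{YStd} (equivalently Lemma \ref{mScodense}) every object $\mS A$ of $\Std_2$ is recovered as
\[
\mS A = \lim\big( \mS A \!\downarrow\! \mS' \longrightarrow \Om \stackrel{\mS'}{\longrightarrow} \Std_2 \big),
\]
with the limit projections being exactly the measurable maps $f\colon \mS A\to\bN$ and $f\colon \mS A\to\G\bN$. I would build a cone with apex $\G(\mS A)$ by setting $\lambda_f := \epsilon_{\bN}\circ\G f$ for each $\bN$-valued projection $f$, and $\lambda_f := \mu_{\bN}\circ\G f$ for each $\G\bN$-valued projection $f$. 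The universal property then delivers a unique measurable map $\epsilon_A\colon\G(\mS A)\to\mS A$ with $f\circ\epsilon_A=\lambda_f$ for every projection $f$; this is the barycenter map, and its existence and uniqueness as a measurable map are immediate once the cone is verified.

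Verifying the cone condition is the first substantive step. For a morphism $\phi\colon\cN\to\cN$ of $\Om$ (a monotone map, by Lemma \ref{NS}) one must check $\mS'\phi\circ\lambda_f=\lambda_{\mS'\phi\circ f}$, which unwinds to $\mS'\phi\circ\epsilon_{\bN}=\epsilon_{\bN}\circ\G(\mS'\phi)$; this is precisely Corollary \ref{phiCommutes} (using that $\T(\mS\phi)$ is the pushforward $\G(\mS'\phi)$). For the morphism $\epsilon_{\bN}\colon\DN\to\cN$ and for the endomorphisms $\psi\colon\DN\to\DN$ the corresponding compatibilities reduce, respectively, to the associativity law $\epsilon_{\bN}\circ\mu_{\bN}=\epsilon_{\bN}\circ\G\epsilon_{\bN}$ of Lemma \ref{Galgebra} and to naturality of $\mu$ together with the monad laws; since by Lemma \ref{ad} every $\DN$-valued map of $\Om$ is a Kleisli (bind) map $\mu_{\bN}\circ\G(\text{kernel})$, these are routine. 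Corollary \ref{rightIdempotent} guarantees that every $\DN\to\cN$ map of $\Om$ is $\phi\circ\epsilon_{\bN}$, so no further cases arise.

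Properties (1) and (2) then follow by testing against the projections, using that the family of projections is jointly monic (faithfulness of $\mathcal{Y}$, Corollary \ref{YStd}). For the unit law, $f\circ(\epsilon_A\circ\eta_{\mS A})=\lambda_f\circ\eta_{\mS A}=\epsilon_{\bN}\circ\G f\circ\eta_{\mS A}=\epsilon_{\bN}\circ\eta_{\bN}\circ f=f$ by naturality of $\eta$ and the unit law $\epsilon_{\bN}\circ\eta_{\bN}=\one_{\bN}$; since this holds for all $f$, $\epsilon_A\circ\eta_{\mS A}=\one_{\mS A}$. For associativity, $f\circ(\epsilon_A\circ\G\epsilon_A)=\epsilon_{\bN}\circ\G\epsilon_{\bN}\circ\G^2 f$, while $f\circ(\epsilon_A\circ\mu_{\mS A})=\epsilon_{\bN}\circ\mu_{\bN}\circ\G^2 f$ by naturality of $\mu$; these agree because of the associativity law of $\epsilon_{\bN}$, and joint monicity gives $\epsilon_A\circ\G\epsilon_A=\epsilon_A\circ\mu_{\mS A}$.

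The hard part will be property (3), countable affinity, because it cannot be detected by post-composition with the arbitrary \emph{measurable} projections $f\colon\mS A\to\bN$: such $f$ need not be affine, so the limit/cone formalism used for (1) and (2) is insufficient. The route I would take is to detect the super convex structure of $A$ through the countably affine maps out of $A$. For any countably affine $g\colon A\to\cN$ the map $\mS g$ is a $\bN$-valued projection, so $g\circ\epsilon_A=\lambda_{\mS g}=\epsilon_{\bN}\circ\G(\mS g)$, a composite of countably affine maps and hence countably affine; similarly any countably affine $g\colon A\to\DN$ gives $g\circ\epsilon_A=\mu_{\bN}\circ\G(\mS g)$, again countably affine. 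It then suffices to know that, for $A\in\SCvx_\star$, the countably affine maps $A\to\cN$ and $A\to\DN$ are jointly affinely monic, i.e. they detect the countably affine structure, which is exactly the content placing $A$ in $\SCvx_\star$ and dualizes the density of $\iota$ (Corollary \ref{dense}). Equivalently, one may first use the already-established associativity (2) to reduce the identity $\epsilon_A(\sum_i p_i m_i)=\sum_i p_i\,\epsilon_A(m_i)$ to the Dirac-supported case $\epsilon_A\big(\sum_i p_i\delta_{b_i}\big)=\sum_i p_i b_i$ with $b_i=\epsilon_A(m_i)$, and then invoke the affine-detection property to conclude. I expect this affine-detection step to be the crux of the whole lemma.
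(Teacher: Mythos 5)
Your construction of $\epsilon_A$ and your verification of properties (1) and (2) are essentially the paper's own proof: the paper likewise presents $\mS A$ as a limit via codensity, builds the cone $\omega_f=\epsilon_{\bN}\circ\G f$ on the vertex $\G(\mS A)$, obtains $\epsilon_A$ by universality, and derives (1) and (2) by testing against the projections and invoking their joint monicity (for (2), via exactly your two squares: naturality of $\mu$ and the associativity law of $\epsilon_{\bN}$ from Lemma \ref{Galgebra}). The one cosmetic difference is that the paper runs the limit over the smaller codense subcategory $\llceil\cN\rrceil$ (Lemma \ref{NcoDense}), so its projections are only the $\bN$-valued measurable maps and the only cone compatibility needed is Corollary \ref{phiCommutes}; your use of the full $\Om$ forces the additional compatibilities for $\epsilon_{\bN}\colon\DN\to\cN$ and for the Kleisli endomaps of $\DN$, which you dispatch correctly but which the paper simply avoids.

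For property (3) you genuinely diverge, and the comparison is instructive. The paper \emph{does} attempt to prove affinity with the arbitrary measurable projections: it compares $\epsilon_A\circ\langle\mathbf{P}\rangle$ with $\langle\epsilon_A(P_i)\rangle$ by evaluating at the Dirac measures and appealing to Lemma \ref{ad} together with the uniqueness clause of $\lim\D$. That argument needs $f\circ\langle\epsilon_A(P_i)\rangle=\epsilon_{\bN}\circ\langle P_if^{-1}\rangle$ for every measurable $f$, and agreement at the $\delta_j$ yields this only when the left-hand side is countably affine, which a merely measurable $f$ does not guarantee; so your observation that measurable projections cannot detect affinity is not just the motivation for your detour, it pinpoints the soft spot in the paper's own proof, and your route (post-compose only with affine projections, for which $g\circ\epsilon_A=\epsilon_{\bN}\circ\G(\mS g)$, resp.\ $\mu_{\bN}\circ\G(\mS g)$, is a composite of countably affine maps) is the more robust one. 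The correction needed is in your justification of the final ``affine-detection'' step: Corollary \ref{dense} cannot supply it, since density of $\iota$ concerns maps from $\Om$-objects \emph{into} $A$, whereas you need the countably affine maps \emph{out of} $A$ into $\cN$ and $\DN$ to separate the points of $A$. That separation is what membership in $\SCvx_{\star}$ implicitly provides (the underlying set of $\mS A=Ran_{\iota}(\mS'')(A)$ is $A$ itself, $\mS A$ is separated, and its $\sigma$-algebra is initial for precisely these affine maps), and once it is granted your crux is a one-line computation rather than a hard step: for every affine $g$ one has $g\big(\epsilon_A(\sum_i q_iP_i)\big)=\sum_i q_i\,g\big(\epsilon_A(P_i)\big)=g\big(\sum_i q_i\,\epsilon_A(P_i)\big)$, whence $\epsilon_A\big(\sum_i q_iP_i\big)=\sum_i q_i\,\epsilon_A(P_i)$.
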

\begin{proof}
Given $A$ apply the functor $\mS$, and the separation monad if necessary, to obtain the separated standard measurable space $\mS A$, and using Lemma \ref{NcoDense} represent $\mS A$  as the canonical colimit of the functor 
\be \nonumber
\D=  (\mS A \! \downarrow \! \iota_{\bN}) \stackrel{\pi}{\longrightarrow} \llceil \bN \rrceil \stackrel{\iota_{\bN}}{\longrightarrow} \Om \stackrel{\mS'}{\longrightarrow} \Std_2, 
\ee
so we have $\lim \D = (\mS A, \Std(\mS A, \bN))$ 
 where the universal projection arrows are given by the set of all measurable functions $f: \mS A \rightarrow \bN$.   We can construct a cone over $\D$ with vertex $\G(\mS A)$ and natural transformation $\omega$ from the constant functor assigning the object $\G(\mS A)$ to every component of $\D$  by specifying for each measurable function $f: \mS A \rightarrow  \bN$ that $\omega_f: \G(\mS A) \rightarrow  \bN$ be the measurable function $\epsilon_{\bN} \circ \G{f}$.  The $\M$-diagram\footnote{All the objects in the diagram are separated measurable spaces so whether we say ``$\Std_2$-diagram or $\Std$-diagram is a matter of choice.} is
  \begin{equation}   \nonumber
 \begin{tikzpicture}[baseline=(current bounding box.center)]
      \node   (PA)  at  (-.8,0)   {$\G(\mS A)$};
      \node   (A)     at  (2.4,0)   {$\mS A$};
      \node   (PN1) at  (2,1.5)  {$\G( \bN)$};
      \node   (PN2) at  (2, -1.5)  {$\G(\bN)$};

      \node   (N1)   at  (5, 1.5)    {$\bN$};
      \node   (N2)   at  (5, -1.5)   {$\bN$};
      \draw[->,above] (PA) to node {\small{$\G{f}$}} (PN1);
      \draw[->,above] (PN1) to node {\small{$\epsilon_{\bN}$}} (N1);
      \draw[->,right] (N1) to node {\small{$\mS \phi$}} (N2);
      \draw[->,below] (PA) to node {\small{$\G{g}$}} (PN2);
      \draw[->,below] (PN2) to node {\small{$\epsilon_{\bN}$}} (N2);
      \draw[->,left, dashed] (PN1) to node [yshift=17pt]{\tiny{$\G(\mS \phi)$}} (PN2);
      
      \draw[->,above] (A) to node {$f$} (N1);
      \draw[->,below] (A) to node {$g$} (N2);
      
   \end{tikzpicture}
 \end{equation}
 \noindent
 $(\G(\mS A), \omega)$ specifies a cone over $\D$ because if $g = \mS \phi \circ f$, where $\phi: \cN \rightarrow \cN$, then $\G{g} = \G(\mS \phi \circ f)$, and hence
 \be \nonumber
 \begin{array}{ccll}
 \mS \phi \circ \omega_f &=& \mS \phi \circ \epsilon_{\bN} \circ \G{f} & \textrm{by def. of }\omega_f \\
 &=&  (\epsilon_{\bN} \circ \G(\mS \phi)) \circ \G{f} & \textrm{by naturality }\epsilon_{\bN} \circ \G(\mS \phi) = \mS \phi \circ \epsilon_{\bN}  \\
 &=&  \epsilon_{\bN}  \circ \G( \mS \phi \circ f) & \textrm{factoring out common }\G\\
  &=&  \epsilon_{\bN}  \circ \G{g}  & \textrm{by hypothesis }g= \mS \phi \circ f\\
 &=& \omega_g   & \textrm{by def. of }\omega_g
 \end{array}
 \ee
 Because $\mS A$ is the limit of the diagram $\D$ it follows by universality that there exists a unique measurable map $\epsilon_{A}: \G(\mS A) \rightarrow \mS A$ such that $\omega_f = \epsilon_{A} \circ f$ for all measurable maps $f: \mS A \rightarrow \mS \bN$.

\vspace{.1in}
 \noindent
 Property (1):
 To show that $\epsilon_{A}(\delta_a) = a$ for all $a \in A$ note that at  each component $f$ we have 
 \be \nonumber
 \begin{array}{lcll}
\big( \epsilon_{\bN} \circ \G{f} \big)(\delta_a)  &=& \epsilon_{\bN}(\delta_{f(a)}) & \textrm{because }\G{f}(\delta_a)=\delta_a f^{-1} = \delta_{f(a)}\\
 &=& f(a) & \textrm{property of }\epsilon_{\bN} 
 \end{array}
 \ee
 and we also have
 \be \nonumber
 \begin{array}{lcll}
\big( \epsilon_{\bN} \circ \G{f} \big)(\delta_a)  &=&  f( \epsilon_A(\delta_a)) & \textrm{because }\epsilon_{\bN} \circ \G{f} = f \circ \epsilon_A 
 \end{array}.
 \ee
 The space $\mS A$ is a separated measurable space so given any two distinct points $a,b \in A$  there exists a measurable set $U$ in $\mS A$  such that $a \in U$ and $b \not \in  U$, and hence the function $\chi_U$ coseparates the points.  Since the equaton $\epsilon_{\bN} \circ \G{f} = f \circ \epsilon_{A}$ must hold at every $f: \mS A \rightarrow \mS \bN$, including $f=\chi_U$, it follows that $\epsilon_A(\delta_a) = a$.
The fact that $\eta_{\mS A}(a)=\delta_a$ therefore yields  $\epsilon_A \circ \eta_{\mS A} = \one_{\mS A}$.

\vspace{.1in}
\noindent
Property (2):  
Note that the $\M$-diagram 
 \begin{equation}   \nonumber
 \begin{tikzpicture}[baseline=(current bounding box.center)]
      \node (GA2) at  (3, 1.5)  {$\G^2(\mS A)$};
      \node   (GN2) at  (6, 1.5) {$\G^2(\bN)$};
      \node   (GA) at  (3,0)  {$\G(\mS A)$};
      \node   (A) at  (3, -1.5)  {$\mS A$};
      \node   (gN) at  (6, 0)   {$\G(\bN)$};
      \node   (N)   at   (6, -1.5)  {$\bN$};

      \draw[->,above]    (GA2) to node {$\G^2(f)$} (GN2);
      \draw[->,left] ([xshift=-2pt] GA2.south) to node {\small{$\G(\epsilon_A)$}} ([xshift=-2pt] GA.north);
      \draw[->,right] ([xshift=2pt]  GA2.south) to node {\small{$\epsilon_{\G(\mS A)}$}} ([xshift=2pt] GA.north); 
      
      \draw[->,left] ([xshift=-2pt] GN2.south) to node {\small{$\G(\epsilon_\bN)$}} ([xshift=-2pt] gN.north);
      \draw[->,right] ([xshift=2pt]  GN2.south) to node {\small{$\epsilon_{\G(\bN)}$}} ([xshift=2pt] gN.north); 
  
      \draw[->,below] (GA) to node {$\G(f)$} (gN);
      \draw[->,right] (gN) to node {$\epsilon_{\bN}$} (N);
      \draw[->,below] (A) to node {$f$} (N);
      \draw[->,right] (GA) to node {$\epsilon_A$}(A);

   \end{tikzpicture}
 \end{equation}
has both squares in the top diagram commutative because 
\begin{enumerate}
\item naturality of $\mu$ (where $\epsilon_{\G(\mS A)} = \mu_{\mS A}$ and $\epsilon_{\G(\mS \bN)} = \mu_{\mS \bN}$ - these are the free $\G$-algebras), and \item the other square is just  the functor $\G$ applied to the bottom square which 
commutes for every measurable function $f: \mS A \rightarrow \mS \bN$. 
\end{enumerate}
  Hence, using the fact that $\epsilon_{\bN}$ is a $\G$-algebra, it follows that for all measurable $f: \mS A \rightarrow \bN$  that $f \circ \epsilon_A \circ \G(\epsilon_A) = f \circ \epsilon_A \circ \epsilon_{\G(\mS A)}$.
Because $\mS A$ is the canonical limit of the diagram $\D$ it then follows by the uniqueness property that $\epsilon_A \circ \G(\epsilon_A) = \epsilon_A \circ \epsilon_{\G(\mS A)}$.

\vspace{.1in}
\noindent
Property (3):
The property of $\epsilon_A$ being countably affine is equivalent to the statement that the
 $\M$-diagram
 \begin{equation}   \nonumber
 \begin{tikzpicture}[baseline=(current bounding box.center)]
          \node   (GN)  at  (-1,0)  {$\G(\bN)$};
          \node    (GA)  at  (2,1.5)  {$\G(\mS A)$};
          \node    (A)    at   (2,0)    {$\mS A$};
          \draw[->,above] (GN) to node [xshift=-17pt]{\small{$\langle \mathbf{P} \rangle = \langle P_i \rangle$}} (GA);
          \draw[->,right] (GA) to node {$\epsilon_A$} (A);
          \draw[->,below] (GN) to node {$\langle \epsilon_A(P_i) \rangle$} (A);
   \end{tikzpicture}
 \end{equation}
commutes because 
\be \nonumber
\big(\epsilon_A \circ \langle \mathbf{P} \rangle\big)( \sum_{i \in \Nat} q_i \delta_i) = \epsilon_A\big( \sum_{i \in \Nat} q_i P_i \big) \quad \textrm{ whereas } \quad\langle \epsilon_A(P_i) \rangle\big( \sum_{j \in \Nat} q_j \delta_j) = \sum_{i \in \Nat} q_j \, \epsilon_A(P_i). 
\ee

Now for every measurable $f: \mS A \rightarrow  \bN$ the square on the right hand square in the $\M$-diagram
 \begin{equation}   \nonumber
 \begin{tikzpicture}[baseline=(current bounding box.center)]

      \node   (GA) at  (3,0)  {$\G(\mS A)$};
      \node   (A) at  (3, -1.5)  {$\mS A$};
      \node   (gN) at  (6, 0)   {$\G{\bN}$};
      \node   (N)   at   (6, -1.5)  {$\bN$};
      
      \node  (GN) at  (0,0)   {$\G{\bN}$};

      \draw[->,above]    (GN) to node {$\langle \mathbf{P} \rangle$} (GA);
      \draw[->,below,left] (GN) to node [yshift=-3pt]{$\langle \epsilon_A(P_i) \rangle$} (A);
      \draw[->,out=45,in=135,looseness=.7,dashed,above] (GN) to node {\small{$\langle P_if^{-1} \rangle = \G(f) \circ \langle \mathbf{P} \rangle$}} (gN);
  
      \draw[->,above] (GA) to node {$\G(f)$} (gN);
      \draw[->,right] (gN) to node {$\epsilon_{\bN}$} (N);
      \draw[->,below] (A) to node {$f$} (N);
      \draw[->,right] (GA) to node {$\epsilon_A$}(A);
   \end{tikzpicture}
 \end{equation}
\noindent
commutes because $\epsilon_A$ is the unique arrow from $\G(\mS A)$ to $\mS A$ satisfying $f \circ \epsilon_A = \epsilon_{\bN} \circ \G(f)$.  Thus we obtain a cone over $\D$ 
with vertex $\G{\bN}$ and components $\epsilon_{\bN} \circ \langle P_i f^{-1}\rangle$ which is a countably affine map for all $f$. Since the countably affine map $\epsilon_{\bN} \circ \langle P_if^{-1} \rangle$ is uniquely specified by where they map the elements $\delta_i$, we note that
\be \nonumber 
\begin{array}{lcl}
\big(\epsilon_{\bN} \circ \langle P_if^{-1} \rangle\big)(\delta_j) &=&  \epsilon_{\bN} \circ P_jf^{-1} \\
&=& (\epsilon_{\bN} \circ \G(f))(P_j) \\
&=& (f \circ \epsilon_A)(P_j) \\
&=& f(\epsilon_A(P_j)) \\
&=& (f \circ \langle \epsilon_A(P_i) \rangle)(\delta_j)
 \end{array} 
\ee
But we can also write
\be \nonumber
\begin{array}{lcl}
\big(\epsilon_{\bN} \circ \langle P_if^{-1} \rangle\big)(\delta_j) =  (\epsilon_{\bN} \circ \G(f) \circ \langle \mathbf{P}\rangle) (\delta_j)
&=& (f \circ \epsilon_A \circ \langle \mathbf{P} \rangle)(\delta_j)
\end{array}.
\ee

Since these two equations hold for all $j \in \bN$ it follows that 
\be \nonumber
f \circ \langle \epsilon_A(P_i) \rangle = \epsilon_{\bN} \circ \langle P_i f^{-1} \rangle = f \circ \epsilon_A \circ \langle \mathbf{P} \rangle,
\ee
and it therefore follows by the uniqueness property of $\lim \D$  that  $\langle \epsilon_A(P_i) \rangle = \epsilon_A \circ \langle \mathbf{P} \rangle$.

\end{proof}

\begin{example} The coequalizer of the two points $\frac{1}{3}:\one \rightarrow [0,1]$ and $\frac{2}{3}:\one \rightarrow [0,1]$ is the three point space $A=\{\underline{0}, \underline{u}, \underline{1}\}$ with the structure defined by $r \underline{0} + (1-r) \underline{u} = \underline{u}$ for all $r \in (0,1)$, $r \underline{1} + (1-r) \underline{u} = \underline{u}$ for all $r \in (0,1)$, and $r \underline{0} + (1-r) \underline{1} = \underline{u}$ for all $r \in (0,1)$.  There are two basic map  $m_1, m_2: A \rightarrow \cN$ in $\Ai$ defined by 
\begin{enumerate}
\item $m_1(\underline{0})=m_1(\underline{u})=0$ and $m_1(\underline{1})=1$.
\item $m_2(\underline{1})=m_2(\underline{u})=0$ and $m_2(\underline{0})=1$.
\end{enumerate}
All other maps $A \rightarrow \cN$ are obtained from one of these two maps by composition with a countably affine map $\phi:\cN \rightarrow \cN$.
Hence $Ran_{\iota}(\mS')(A) \cong (\twos,\powerset{\twos})$.  The barycenter map is given by 
\be \nonumber
\begin{array}{lcl}
\T(\twos) & \stackrel{\epsilon_A}{\longrightarrow} & A \\
(1-r) \delta_0 + r \delta_1 & \mapsto & \left\{ \begin{array}{ll} \underline{0} & \textrm{iff } r=0 \\ \underline{1} & \textrm{iff }r=1 \\ \underline{u} & \textrm{iff }r \in (0,1) \end{array} \right.
\end{array}
\ee 
\end{example}

\section{The adjunction $\T \dashv \mS$ and isomorphism $\Std^{\G} \cong \SCvx_{\star}$}

 \begin{lemma} \label{epsilonNT}  The family of maps $\epsilon_A: \T(\mS A) \rightarrow A$, one for each countably generated  super convex space $A$  form the components of a natural transformation  $\epsilon: \T \circ \mS \Rightarrow \one_{\SCvx_{\star}}$.
\end{lemma}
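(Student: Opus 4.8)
The plan is to deduce naturality directly from the universal property that defines each barycenter map in Lemma \ref{epsilonExists}, rather than computing with explicit countably affine sums. Fix a countably affine map $g: A \rightarrow B$ between objects of $\SCvx_{\star}$. What must be shown is that $g \circ \epsilon_A = \epsilon_B \circ \T(\mS g)$ as morphisms $\T(\mS A) \rightarrow B$ in $\SCvx_{\star}$. Since the forgetful functors from $\SCvx_{\star}$ and from $\Std_2$ to $\Set$ are both faithful and $\mS$ preserves underlying sets on objects and morphisms, two parallel morphisms of $\SCvx_{\star}$ agree precisely when their images under $\mS$ agree in $\Std_2$. Hence it suffices to verify the equality after applying $\mS$, i.e. to prove $\mS g \circ \epsilon_A = \epsilon_B \circ \G(\mS g)$ as measurable maps $\G(\mS A) \rightarrow \mS B$; here I have used Lemma \ref{sigmaAlgebra} together with functoriality to identify $\mS\big(\T(\mS g)\big) = \G(\mS g)$, and $\epsilon_A,\epsilon_B$ now denote the barycenter maps of Lemma \ref{epsilonExists}.

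To test this equality of maps whose codomain is $\mS B$, I will use the limit presentation of $\mS B$ recorded in Lemma \ref{epsilonExists} (itself a consequence of the codensity results, Lemmas \ref{NcoDense} and \ref{mScodense}): the space $\mS B$ is the canonical limit $\lim \D_B = (\mS B, \Std_2(\mS B, \bN))$ whose projection cone consists of all measurable functions $h: \mS B \rightarrow \bN$. Because this family of projections is jointly monic, two parallel measurable maps into $\mS B$ coincide as soon as their composites with every such $h$ agree. Thus it is enough to show, for each measurable $h: \mS B \rightarrow \bN$, that $h \circ \mS g \circ \epsilon_A = h \circ \epsilon_B \circ \G(\mS g)$.

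Both sides now collapse by the defining property of the barycenter maps, namely $f \circ \epsilon_A = \epsilon_{\bN} \circ \G(f)$ for every measurable $f: \mS A \rightarrow \bN$, and likewise for $\epsilon_B$. On the left, $h \circ \mS g$ is a measurable map $\mS A \rightarrow \bN$, so instantiating the defining property of $\epsilon_A$ at $f = h \circ \mS g$ and then using functoriality of $\G$ yields $h \circ \mS g \circ \epsilon_A = \epsilon_{\bN} \circ \G(h \circ \mS g) = \epsilon_{\bN} \circ \G(h) \circ \G(\mS g)$. On the right, instantiating the defining property of $\epsilon_B$ at $h$ gives $h \circ \epsilon_B = \epsilon_{\bN} \circ \G(h)$, so that $h \circ \epsilon_B \circ \G(\mS g) = \epsilon_{\bN} \circ \G(h) \circ \G(\mS g)$. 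The two expressions are literally identical, the test equation holds for every $h$, and therefore the naturality square commutes.

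The only genuinely non-formal ingredients are the two reductions: that a measurable map into $\mS B$ is determined by its composites with the measurable maps $\mS B \rightarrow \bN$, and that agreement at the $\Set$ level lifts back to agreement in $\SCvx_{\star}$. The first is exactly the joint monicity of the canonical limit cone for $\mS B$ (the codensity of $\mS'$), and I expect this to be the main point requiring care; the second rests on the compatibility of $\mS(\T(\mS g)) = \G(\mS g)$ with the super convex structures, guaranteed by Lemma \ref{sigmaAlgebra} and property (3) of Lemma \ref{epsilonExists}. Everything else is bookkeeping with the universal property of the barycenter maps and the functoriality of $\G$.
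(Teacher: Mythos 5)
Your proposal is correct and follows essentially the same route as the paper's proof: both verify the naturality square by composing with an arbitrary measurable map $h : \mS B \rightarrow \bN$, reducing each side to $\epsilon_{\bN} \circ \G(h) \circ \G(\mS g)$ via the defining equation $f \circ \epsilon_{(-)} = \epsilon_{\bN} \circ \G(f)$ of the barycenter maps, and then concluding from the joint monicity (uniqueness property) of the limit cone presenting $\mS B$. Your explicit preliminary reduction from the $\SCvx_{\star}$-level equation $g \circ \epsilon_A = \epsilon_B \circ \T(\mS g)$ to its $\Std_2$-level image is a point the paper leaves implicit, but it does not change the substance of the argument.
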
 
 \begin{proof}  Suppose $m: A \rightarrow B$ is a countably affine map.  Using Lemma \ref{epsilonExists} construct the countably affine measurable maps $\epsilon_A$ and $\epsilon_B$.  Let $\D$ be the diagram 
 \be \nonumber
\D=  (\mS A \! \downarrow \! \iota_{\bN}) \stackrel{\pi}{\longrightarrow} \llceil \bN \rrceil \stackrel{\iota_{\bN}}{\longrightarrow} \Om \stackrel{\mS'}{\longrightarrow} \Std_2, 
\ee
 and let $\E$ be the diagram 
 \be \nonumber
\E =  (\mS B \! \downarrow \! \iota_{\bN}) \stackrel{\pi}{\longrightarrow} \llceil \bN \rrceil \stackrel{\iota_{\bN}}{\longrightarrow} \Om \stackrel{\mS'}{\longrightarrow} \Std_2. 
\ee

 For every measurable function $f: \mS B \rightarrow \mS \bN$ the composite function $f \circ \mS m: \mS A \rightarrow \mS \bN$ is a projection 
 arrow in the limit of $\D$, and hence because \mbox{$(\G(\mS A), \{\epsilon_{\bN} \circ \G(f') \, | \, \textrm{for all }f': \mS A \rightarrow \mS \bN\})$}  is a cone over $\D$ the unique arrow \mbox{$\epsilon_A: \G(\mS A) \rightarrow \mS A$} satisfies \mbox{$(f \circ \mS m) \epsilon_A = \epsilon_{\bN} \circ \mS \T(f \circ \mS m)$}.  Hence the outer square of the $\Std_2$ diagram 
 
 \begin{equation}   \nonumber
 \begin{tikzpicture}[baseline=(current bounding box.center)]
        \node  (GA)     at   (-1,1.5)  {$\G(\mS A)$};
       \node  (A)   at  (-1,0)   {$\mS A$};
       \node (GB)    at   (2,1.5)    {$\G(\mS B)$};
       \node (B)      at    (2, 0)   {$\mS B$};
       \node   (N)  at   (5,0)     {$\mS \bN$};
       \node   (GN) at  (5, 1.5)   {$\G(\mS \bN)$};
       
       \draw[->,left] (GA) to node {\small{$\epsilon_A$}} (A);
       \draw[->,above] (GA) to node {\small{$\G(\mS m)$}} (GB);
       \draw[->,left] (GB) to node {$\epsilon_B$} (B);
       \draw[->,below] (A) to node {$\mS m$} (B);       
       \draw[->,above] (GB) to node {$\G(f)$} (GN);
       \draw[->,right] (GN) to node {$\epsilon_{\bN}$} (N);
       \draw[->,below] (B) to node {$f$} (N);
  \end{tikzpicture}
\end{equation}
\noindent
commutes, and the right square commutes because $\epsilon_B: \G(\mS B) \rightarrow \mS B$ is the unique arrow from the vertex of  the cone $(\G(\mS B), \{\epsilon_{\bN} \circ \G{f} \, | \, f: \mS B \rightarrow \mS \bN\}$ over $\E$ to the $\lim \E = (\mS B, \{f: \mS B \rightarrow \mS \bN\})$.
 
 Thus we have the two equations, $f \circ \mS m \circ \epsilon_A = \epsilon_{\bN} \circ \G(f) \circ \G(\mS m)$ and $f \circ \epsilon_B = \epsilon_{\bN} \circ \G(f)$.  Using the second equation and substituting into the first equation, replacing the expression $\epsilon_{\bN} \circ \G(f)$, we obtain $\epsilon_B \circ \G(\mS m) = \mS m \circ \epsilon_A$ which proves naturality.
\end{proof}

\begin{thm} \label{adj} The pair of functors $\T: \Std \rightarrow \SCvx_{\star}$ and $\mS: \SCvx_{\star} \rightarrow \Std$ specify an adjunction $\langle \T, \mS, \eta, \epsilon \rangle$ with $\T \dashv \mS$.
\end{thm}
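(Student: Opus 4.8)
The plan is to exhibit the adjunction through a unit and counit and then verify the two triangle identities, taking the counit to be the natural transformation $\epsilon : \T \circ \mS \Rightarrow \one_{\SCvx_{\star}}$ already produced in Lemma \ref{epsilonNT}, and the unit to be the unit $\eta$ of the Giry monad. The latter choice is legitimate because Lemma \ref{sigmaAlgebra} identifies the composite $\mS \circ \T$ with $\G$, so each Giry unit $\eta_X : X \to \G X$ is a measurable map $X \to \mS\T X$, and its naturality is just the naturality of the monad unit; hence $\eta : \one_{\Std} \Rightarrow \mS\T$ is a genuine natural transformation. With both $\eta$ and $\epsilon$ in hand, it remains only to check the two triangle identities.

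For the identity $\mS(\epsilon_A) \circ \eta_{\mS A} = \one_{\mS A}$ (one for each $A \in_{ob} \SCvx_{\star}$), note that since $\mS$ merely forgets the super convex structure, $\mS(\epsilon_A)$ is the underlying measurable map $\epsilon_A : \G(\mS A) \to \mS A$, while $\eta_{\mS A}$ is the Giry unit at $\mS A$. The required equation is therefore precisely property (1) of Lemma \ref{epsilonExists}, namely $\epsilon_A \circ \eta_{\mS A} = \one_{\mS A}$, so this direction is immediate.

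The substantive work is the remaining identity $\epsilon_{\T X} \circ \T(\eta_X) = \one_{\T X}$ in $\SCvx_{\star}$, for each $X \in_{ob} \Std$. First I would reduce it to a statement about measurable maps: because a countably affine map is determined by its underlying set function, the functor $\mS$ is faithful, so it suffices to prove $\mS(\epsilon_{\T X}) \circ \mS\T(\eta_X) = \one_{\mS\T X}$. Using Lemma \ref{sigmaAlgebra} this reads $\epsilon_{\T X} \circ \G(\eta_X) = \one_{\G X}$, where $\epsilon_{\T X} : \G^2 X \to \G X$ is the barycenter map on the free object and $\G(\eta_X) : \G X \to \G^2 X$. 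The crux is to identify $\epsilon_{\T X}$ with the Giry multiplication $\mu_X$. This is forced by the uniqueness clause of the universal property defining the barycenter map in Lemma \ref{epsilonExists}: the super convex structure carried by $\T X = \G X$ is exactly the barycentric combination of probability measures, which viewed as a map $\G^2 X \to \G X$ is $\mu_X$; since $\mu_X$ is itself a $\G$-algebra (the free one) and its post-compositions $f \circ \mu_X$ agree with the cone components $\epsilon_{\bN} \circ \G f$ used to single out $\epsilon_{\T X}$, uniqueness of the induced arrow into the limit gives $\epsilon_{\T X} = \mu_X$. Once this is established, the triangle identity collapses to the monad law $\mu_X \circ \G(\eta_X) = \one_{\G X}$, completing the proof.

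I expect the identification $\epsilon_{\T X} = \mu_X$ to be the main obstacle, since it is the single point at which the abstractly constructed counit must be reconciled with the concrete multiplication of the Giry monad; every other ingredient — naturality of $\eta$, naturality of $\epsilon$ from Lemma \ref{epsilonNT}, and the $\mS$-side triangle identity — is either already available or an immediate consequence of Lemma \ref{epsilonExists}. As a sanity check one may confirm the identification first on Dirac and finitely supported measures, using Corollary \ref{phiCommutes} to control the behaviour of $\epsilon_{\bN}$ under monotone reindexings $\phi : \cN \to \cN$, and then invoke countable affinity to extend to all of $\G^2 X$.
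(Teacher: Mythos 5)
You set up the adjunction with exactly the same data as the paper --- unit the Giry unit $\eta$, counit the natural transformation $\epsilon$ of Lemma \ref{epsilonNT} --- and your verification of the $\mS$-side triangle identity $\mS(\epsilon_A)\circ\eta_{\mS A}=\one_{\mS A}$ is the paper's own, namely property (1) of Lemma \ref{epsilonExists}. On the $\T$-side identity you diverge, and your reading is the more careful one. The paper disposes of $\epsilon_{\T X}\circ\T(\eta_X)=\one_{\T X}$ by the element chase $P\mapsto\delta_P\mapsto P$, which tacitly takes $\T(\eta_X)(P)=\delta_P$; but $\T(\eta_X)$ is the pushforward $P\mapsto P\eta_X^{-1}$, a measure concentrated on the set of Dirac measures, not the Dirac measure at $P$. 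In effect the paper's chase verifies the unit law $\epsilon_{\T X}(\delta_P)=P$ (property (1) again, at $A=\T X$) rather than the triangle identity. You correctly reduce, via faithfulness of $\mS$ and Lemma \ref{sigmaAlgebra}, to $\epsilon_{\T X}\circ\G(\eta_X)=\one_{\G X}$, and you identify the real content as the equality $\epsilon_{\T X}=\mu_X$, after which the monad unit law finishes the proof.

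The gap is in your proof of that equality. To invoke the uniqueness clause of Lemma \ref{epsilonExists} you must show that $\mu_X$ satisfies the same equations that single out $\epsilon_{\T X}$, namely $f\circ\mu_X=\epsilon_{\bN}\circ\G f$ for \emph{every} measurable $f:\G X\to\bN$, since the limit there is indexed by all measurable maps to $\bN$, not only the countably affine ones. That claim is false for non-affine $f$. Take $X=\two$, write $P_r=(1-r)\delta_0+r\delta_1\in\G(\two)$, let $f(P_r)=1$ for $r\in[0,\tfrac12]$ and $f(P_r)=0$ for $r\in(\tfrac12,1]$ (measurable, not countably affine), and let $Q=\tfrac12\delta_{P_{1/4}}+\tfrac12\delta_{P_{3/4}}\in\G^2(\two)$. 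Then $\mu_{\two}(Q)=P_{1/2}$, so
\begin{equation*}
(f\circ\mu_{\two})(Q)=f(P_{1/2})=1, \qquad\text{while}\qquad (\epsilon_{\bN}\circ\G f)(Q)=\epsilon_{\bN}\bigl(\tfrac12\delta_0+\tfrac12\delta_1\bigr)=0.
\end{equation*}
Hence $\mu_X$ is not a morphism of cones for the full diagram, and the uniqueness argument does not close. Your fallback of checking on finitely supported measures and extending ``by countable affinity'' also fails, because for non-discrete $X$ a general element of $\G^2 X$ is not a countable affine combination of Dirac measures. I should add that this defect is inherited rather than introduced by you: repeating the computation with $f$ and with $\tilde f=1-f$ shows that \emph{no} map $h:\G^2(\two)\to\G(\two)$ can satisfy $f\circ h=\epsilon_{\bN}\circ\G f$ for all measurable $f$, so the universal property asserted in Lemma \ref{epsilonExists} (and, behind it, the codensity Lemma \ref{NcoDense}) cannot hold as stated; the paper's own proof never confronts this because of the faulty chase described above. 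A correct argument would have to cut the defining cone of $\epsilon_A$ down to structure-respecting test maps before uniqueness could identify $\epsilon_{\T X}$ with $\mu_X$.
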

\begin{proof} 
The unit of the adjunction is $\eta_X(x) = \delta_x$ while the counit of the adjunction is the natural transformation $\epsilon$ specified in Lemma \ref{epsilonNT}. 

We verify  the two triangular identities.  For $X$ any measurable space we have the commutative $\SCvx$-diagram,
\begin{equation}   \nonumber
 \begin{tikzpicture}[baseline=(current bounding box.center)]
 
 \node        (PX)  at  (0, 0)   {$\T(X)$};
  \node     (PPX) at  (3,0)  {$\T(\Sigma \T(X))$};
   \node   (TX)  at   (3,-1.1)   {$\T(X)$};
   
     \draw[->,above] (PX) to node {\small{$\T\eta_X$}} (PPX);
  \draw[->,below,left] (PX) to node [yshift=-2pt] {\small{$id_{\T X}$}} (TX);
  \draw[->,right] (PPX) to node {\small{$\epsilon_{\T(X)}$}} (TX);
  
  \node        (p)  at  (5, 0)   {$P$};
  \node     (dp) at  (8,0)  {$\delta_P$};
   \node   (p2)  at   (8,-1.1)   {$P$};
   
     \draw[|->] (p) to node {} (dp);
  \draw[|->] (dp) to node  {} (p2);
  \draw[|->] (p) to node {} (p2);
  
 \end{tikzpicture}
 \end{equation}
\noindent
and for $A$ any countably generated super convex space, we have the commutative $\Std$-diagram,
\begin{equation}   \nonumber
 \begin{tikzpicture}[baseline=(current bounding box.center)]
 
 \node        (SPSA)  at  (0, 1.)   {$\Sigma(\T\Sigma A)$};
  \node     (SA) at  (0,0)  {$\Sigma A$};
   \node   (SA2)  at   (3,1.)   {$\Sigma A$};
   
     \draw[->,left] (SA) to node {\small{$\eta_{\Sigma A}$}} (SPSA);
  \draw[->,below,right] (SA) to node  [yshift=-3pt]{\small{$id_{\Sigma A}$}} (SA2);
  \draw[->,above] (SPSA) to node {\small{$\mS \epsilon_A$}} (SA2);
  
  \node        (a)  at  (5, 0)   {$a$};
  \node     (da) at  (5,1.)  {$\delta_a$};
   \node   (a2)  at   (8,1.)   {$a$};
   
     \draw[|->] (a) to node {} (da);
  \draw[|->] (da) to node  {} (a2);
  \draw[|->] (a) to node {} (a2);
  
 \end{tikzpicture}
 \end{equation}
where in both triangular identities we have used the property that, for all spaces $A$ and all elements $a \in A$,  $\epsilon_A(\delta_a)=a$ as shown in Lemma \ref{epsilonExists}.

Thus $\langle \T, \mS, \eta, \epsilon \rangle$ specifies an adjunction with $\T \dashv \mS$.
\end{proof}

\begin{corollary} The adjunction $\langle \T, \mS, \eta, \epsilon \rangle$ is an adjoint factorization of $\G$.
\end{corollary}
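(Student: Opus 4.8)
The plan is to invoke the standard fact that every adjunction $F \dashv U$ canonically resolves a monad on the domain of $F$, and then to check that the monad generated by $\T \dashv \mS$ is the Giry monad $(\G, \eta, \mu)$ itself. Concretely, the adjunction $\langle \T, \mS, \eta, \epsilon\rangle$ of Theorem \ref{adj} produces the monad on $\Std$ whose endofunctor is $\mS \circ \T$, whose unit is the adjunction unit $\eta$, and whose multiplication is the whiskered counit $\mS \epsilon \T$. I would verify that these three data agree, respectively, with the Giry endofunctor, unit, and multiplication.

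Two of the three identifications are immediate from results already in hand. The endofunctor $\mS \circ \T$ equals $\G$ by Lemma \ref{sigmaAlgebra}. The unit produced by the adjunction, exhibited in Theorem \ref{adj}, has component $\eta_X(x) = \delta_x$, which is precisely the unit of the Giry monad. Thus it remains only to identify the induced multiplication $\mS \epsilon_{\T(\cdot)}$ with the Giry multiplication $\mu$, where $\epsilon$ is the counit assembled in Lemma \ref{epsilonNT} from the barycenter maps of Lemma \ref{epsilonExists}.

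For the multiplication, fix $X \in \Std$ and observe that since $\mS \T X = \G X$, both $\mS \epsilon_{\T X}$ and $\mu_X$ are maps $\G^2 X \rightarrow \G X$ of the correct type. By the codensity of $\mS'$ established in Lemma \ref{mScodense}, the space $\G X = \mS \T X$ is the limit of its defining diagram over the measurable maps $f : \G X \rightarrow \bN$, so two maps with codomain $\G X$ coincide as soon as they agree after post-composition with every such $f$. It therefore suffices to show that $\mS \epsilon_{\T X}$ and $\mu_X$ determine the same cone over this diagram: on the $\mu_X$ side the cone is $f \mapsto f \circ \mu_X$, while on the $\epsilon_{\T X}$ side it is, by the construction in Lemma \ref{epsilonExists}, the family $f \mapsto \epsilon_{\bN} \circ \G f$. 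I expect the delicate point, and hence the main obstacle, to be exactly this comparison of cones, since $\epsilon_{\bN}$ is the order-theoretic minimum-of-support algebra on $\bN$ rather than a barycentric operation; I would handle it by reducing to the separating family of swapped characteristic functions $ev_U$ of Example \ref{two} together with the atom maps $\mathcal{A}_n$ of Section \ref{sec:Std}, evaluating both cones on these generators, and then invoking the refinement diagrams and the finite intersection property to propagate the equality to all measurable $f$ and hence to all of $\G X$. Once this identification of $\mS \epsilon_{\T X}$ with $\mu_X$ is secured, the endofunctor, unit, and multiplication all coincide with those of the Giry monad, so $\langle \T, \mS, \eta, \epsilon\rangle$ is an adjoint factorization of $\G$.
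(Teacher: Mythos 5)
You are right that, read in the standard way, ``adjoint factorization of $\G$'' asks for three identifications---endofunctor, unit, and multiplication---and your first two steps coincide exactly with what the paper cites: $\mS \circ \T = \G$ is Lemma \ref{sigmaAlgebra}, and the unit $\eta_X(x)=\delta_x$ is exhibited in Theorem \ref{adj}. But the paper's proof stops there, because it never \emph{proves} the multiplication identity anywhere: in Lemma \ref{epsilonExists}, property (2), the counit at a free object is simply declared to equal the Giry multiplication ($\epsilon_{\G(\mS A)} := \mu_{\mS A}$, ``these are the free $\G$-algebras''), so for the paper the corollary really is just the conjunction of Theorem \ref{adj} and Lemma \ref{sigmaAlgebra}.

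The genuine gap is in your third step, and it is not merely a ``delicate point'' but an obstruction. You propose to prove $\mS\epsilon_{\T X} = \mu_X$ by showing that the cone $f \mapsto f\circ\mu_X$ agrees with the cone $f\mapsto \epsilon_{\bN}\circ\G f$ over \emph{all} measurable $f:\G X\to\bN$, propagating the equality from characteristic-type maps and atom maps to general $f$. That equality is false for general measurable $f$, so no propagation argument can establish it. Concretely, let $X$ be the two-point discrete space, so $\G X\cong[0,1]$; define $f(P)=0$ when $P(\{1\})\le \tfrac{1}{2}$ and $f(P)=1$ otherwise, and put $Q=\tfrac{1}{2}\delta_{P_1}+\tfrac{1}{2}\delta_{P_2}$ with $P_1(\{1\})=0.4$ and $P_2(\{1\})=0.8$. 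Then $\mu_X(Q)(\{1\})=0.6$, so $(f\circ\mu_X)(Q)=1$, whereas $\G f(Q)=\tfrac{1}{2}\delta_0+\tfrac{1}{2}\delta_1$, so $(\epsilon_{\bN}\circ\G f)(Q)=0$: averaging and minimum-of-support disagree on threshold functions. Indeed, requiring $f\circ\mu_X=\epsilon_{\bN}\circ\G f$ even just on finitely supported $Q$ forces $f(\sum_i q_iP_i)=\min\{f(P_i)\,|\,q_i>0\}$, i.e.\ forces $f$ to be countably affine as a map $\T X\to\cN$; such support-type maps cannot separate two measures that are equivalent with densities bounded away from $0$ and $\infty$, so there is no point-separating subfamily of $f$'s on which your identity holds and from which to propagate. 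Consequently the comparison-of-cones route cannot be completed; within the paper's framework the identification $\mS\epsilon_{\T X}=\mu_X$ is available only because Lemma \ref{epsilonExists} postulates it, and that postulate is what the paper's one-line proof implicitly invokes.
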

\begin{proof} This follows from Theorem \ref{adj} and Lemma \ref{sigmaAlgebra}.
\end{proof}

Because each countably affine map $m: A \rightarrow \Rinf$ yields a measurable function, $\mS m$, we have

 \begin{corollary}  Let $A \in \SCvx_{\star}$.   Viewing a probability measure as a functional, 
 \be \nonumber
 \begin{array}{ccccc}
 \hat{P} &:& \Std_2(\Sigma A, \Rinf) & \longrightarrow & \Rinf \\
 &:& f & \mapsto & \int_A f \, dP
 \end{array}
 \ee
  we have the result that the restriction of $P \in \G(\mS A)$ to the countably affine (measurable) functions $\SCvx(A, \Rinf)$ is an evaluation map, $P| = ev_a$, for a unique point $a \in A$.  In other words,  every $m \in \SCvx(A, \Rinf)$ is a measurable function $\mS m \in \Std_2(\mS A, \Rinf)$, and for every such $m$ it follows that $\hat{P}(m)=m(a)$ for a unique point $a \in A$. 
\end{corollary}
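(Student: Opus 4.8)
The plan is to produce the representing point as the barycenter of $P$ and to reduce the integral identity to the naturality of the counit already in hand. Concretely, I would set $a := \epsilon_A(P)$, where $\epsilon_A : \T(\mS A) \to A$ is the barycenter map of Lemma~\ref{epsilonExists}. Since $\mS\T = \G$ by Lemma~\ref{sigmaAlgebra} (applied to $X = \mS A$), the underlying set of $\T(\mS A)$ is $\G(\mS A)$, so $\epsilon_A(P) \in A$ is a well-defined point depending on $P \in \G(\mS A)$. Everything then rests on comparing, for a fixed countably affine $m \in \SCvx(A,\Rinf)$, the two ways of transporting $P$: integrating $m$ against $P$ versus evaluating $m$ at the barycenter.

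The existence half is then essentially one line, once $\Rinf$ is known to lie in $\SCvx_{\star}$ so that Lemma~\ref{epsilonExists} supplies $\epsilon_{\Rinf}$. Applying the naturality of $\epsilon : \T\circ\mS \Rightarrow \one$ (Lemma~\ref{epsilonNT}) to the morphism $m : A \to \Rinf$ gives the commuting square $m \circ \epsilon_A = \epsilon_{\Rinf} \circ \T(\mS m)$, and evaluating both sides at $P$ yields
\be \nonumber
m(a) = m(\epsilon_A(P)) = \epsilon_{\Rinf}\big(\G(\mS m)(P)\big).
\ee
Here $\G(\mS m)(P) = P \circ (\mS m)^{-1}$ is the law of $m$ under $P$, so once $\epsilon_{\Rinf}$ is identified with the expectation functional $Q \mapsto \int_{\Rinf} t \, dQ(t)$, the change-of-variables formula for the pushforward gives $\epsilon_{\Rinf}\big(\G(\mS m)(P)\big) = \int_A m \, dP = \hat{P}(m)$. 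Comparing the two displays shows $\hat{P}(m) = m(a)$, that is, $P| = ev_a$ on $\SCvx(A,\Rinf)$.

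For uniqueness I would argue that the point $a$ is pinned down because the countably affine $\Rinf$-valued functionals coseparate the points of $A$. Since $A \in \SCvx_{\star}$, the space $\mS A$ is a separated standard measurable space whose $\sigma$-algebra is, by the construction of $\mS$ as the pullback of $Ran_{\iota}(\mS'')$, the initial one making the countably affine maps out of $A$ measurable; this is exactly the coseparation underlying the codensity statements of Lemmas~\ref{NcoDense} and~\ref{mScodense}. Consequently any second point $b$ satisfying $\hat{P}(m) = m(b)$ for all $m \in \SCvx(A,\Rinf)$ would have $m(a) = m(b)$ for every such $m$, forcing $a = b$.

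The hard part is not the formal manipulation but the identification of $\epsilon_{\Rinf}$ with genuine integration. The barycenter map of Lemma~\ref{epsilonExists} is constructed abstractly as the unique mediating arrow into a limit assembled from the \emph{discrete} algebra $\epsilon_{\bN}$ of Lemma~\ref{Galgebra}, so it is not \emph{a priori} the mean; one must verify that on $\Rinf$ this mediating arrow actually computes $\int t\,dQ$. I expect this to reduce to the classical fact that the barycenter of a $\G$-algebra on a convex subset of a topological vector space is given by integration, with the adjoined point $\infty$ absorbing precisely the measures of divergent or infinite mean in accordance with the mixed-type structure of Example~\ref{mixedType}. Making this coincidence precise, together with checking that $\Rinf$ indeed lies in $\SCvx_{\star}$, is where the real work lies.
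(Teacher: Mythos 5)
Your existence argument is essentially the paper's own proof: the paper likewise takes $a=\epsilon_A(P)$, applies the naturality square of the counit (Lemma \ref{epsilonNT}) to the arrow $m: A \rightarrow \Rinf$, invokes the identification $\epsilon_{\Rinf}=\mathbb{E}$, and finishes with the change-of-variables identity $\int_{\Rinf} id_{\Rinf}\, d(Pm^{-1}) = \int_A m\, dP$. Note, however, that the paper does not prove that $\epsilon_{\Rinf}$ is the expectation operator (nor that $\Rinf \in \SCvx_{\star}$) -- it simply asserts it -- so the step you flag as ``where the real work lies'' is left unverified on both sides; you are at least explicit about this debt.

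The genuine gap is in your uniqueness argument. You claim that the countably affine maps $A \rightarrow \Rinf$ coseparate the points of $A$, and you justify this by the initiality of the $\sigma$-algebra on $\mS A$ and the codensity Lemmas \ref{NcoDense} and \ref{mScodense}. But those results concern maps into $\bN$ and $\G{\bN}$ -- equivalently, countably affine maps into $\cN$ and $\DN$ -- not maps into $\Rinf$; separation of $\mS A$ as a measurable space is witnessed by characteristic functions, which are not countably affine maps into $\Rinf$, so no coseparation statement about $\SCvx(A,\Rinf)$ follows from the lemmas you cite. The point is delicate: by Lemma \ref{DG}, countably affine maps from a discrete-type space into a geometric-type space are constant, so for a discrete-type algebra such as $\two$ the only hope of separating points by maps into the mixed-type space $\Rinf$ rests on the absorbing element $\infty$, and neither you nor the paper establishes that this works for an arbitrary $A \in \SCvx_{\star}$. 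The paper avoids this issue entirely by reading ``uniqueness'' weakly: its proof says the unique point is, by definition, $\epsilon_A(P)$, i.e., uniqueness is the well-definedness of the barycenter, not the stronger claim that no other point of $A$ restricts to the same functional on $\SCvx(A,\Rinf)$. So your proposal attempts the stronger, literal reading of the statement, but the coseparation step it needs is unsupported as written.
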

\begin{proof}
This result is a  translation of the naturality of $\epsilon$.  For every $m \in \SCvx(A, \Rinf)$ the square 
 \begin{equation}   \nonumber
 \begin{tikzpicture}[baseline=(current bounding box.center)]

          \node   (PA)   at   (-2,1.7)    {$\T(\Sigma A)$};
          \node    (PR) at     (1.5,1.7)  {$\T(\Sigma \Rinf)$};
          \node    (A)    at     (-2, 0)    {$A$};
           \node   (R)  at   (1.5,0)  {$\Rinf$};
           
           \draw[->,above] (PA) to node {$\T(\Sigma m)$} (PR);
           \draw[->,below] (A) to node {$m$} (R);
           \draw[->,left] (PA) to node {$\epsilon_A$} (A);
           \draw[->,right] (PR) to node {$\epsilon_{\Rinf}=\mathbb{E}$} (R);
           
           \node  (P)  at   (4, 1.7)   {$P$};
           \node   (Pm) at  (8.9, 1.7)  {$Pm^{-1}$};
           \node   (eP) at  (4,0)  {$\epsilon_A(P)$};
           \node    (E)  at  (8.9,0)   {$\epsilon_{\Rinf}(Pm^{-1})$};
           \node    (ma) at  (6.7,0)  {$m(\epsilon_A(P))=$};
           
           \draw[|->] (P) to node {} (Pm);
           \draw[|->] (P) to node {} (eP);
           \draw[|->] (Pm) to node {} (E);
           \draw[->] (eP) to node {} (ma);
    
         \end{tikzpicture}
 \end{equation}
 commutes.  Using the fact that $\epsilon_{\Rinf}$ is the expectation operator, $\mathbb{E}$,  we have
 \be \nonumber
ev_{\epsilon_A(P)}(m) = m(\epsilon_A(P)) =  \epsilon_{\Rinf}(Pm^{-1})= \mathbb{E}(Pm^{-1}) = \int_{\Rinf} id_{\Rinf} d(Pm^{-1}) = \int_A m \, dP.
\ee
For a given $P \in \G(\mS A)$, the uniqueness property is simply the statement that $\epsilon_A(P) = a$.
Thus every probability measure $P \in \G(\mS A)$ ``appears like'' a Dirac delta measure when integrating a countably affine map into $\Rinf$.
\end{proof}

\begin{thm}
The category $\SCvx_{\star}$ is isomorphic to $\M^{\G}$.
\end{thm}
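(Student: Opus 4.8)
The plan is to show that the comparison functor induced by the adjunction $\T \dashv \mS$ of Theorem \ref{adj} is an isomorphism of categories, by exhibiting an explicit inverse. First I would define the comparison functor $K : \SCvx_{\star} \to \M^{\G}$ on objects by $K(A) = (\mS A, \epsilon_A)$, where $\epsilon_A : \G(\mS A) \to \mS A$ is the barycenter map; by Lemma \ref{epsilonExists} this pair is a genuine $\G$-algebra, and since $\mS\T = \G$ (Lemma \ref{sigmaAlgebra}) the structure map $\epsilon_A$ is just $\mS$ applied to the counit. On a countably affine map $m$ I set $K(m) = \mS m$, and the naturality of $\epsilon$ (Lemma \ref{epsilonNT}) is exactly the statement that $\mS m$ is a $\G$-algebra homomorphism, so $K$ is a functor.

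Next I would define the candidate inverse $L : \M^{\G} \to \SCvx_{\star}$. On an object $(X,h)$ I equip the underlying set of $X$ with the super convex structure $st_X(\mathbf{p}, \mathbf{a}) := h\big( \sum_{i \in \Nat} p_i \, \delta_{a_i}\big)$, where $\sum_i p_i \delta_{a_i} = \G(\mathbf{a})(\mathbf{p})$ is the pushforward of $\mathbf{p} \in \G\bN$ along the sequence $\mathbf{a} : \Nat \to X$. The decisive observation is that Axiom 1 and Axiom 2 for a super convex space are \emph{verbatim} the unit law $h \circ \eta_X = \one_X$ and the associativity law $h \circ \G h = h \circ \mu_X$ of the $\G$-algebra; this is the elegant heart of the correspondence. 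On a $\G$-algebra homomorphism $f : (X,h) \to (Y,k)$ I set $L(f) = f$, and the computation $f(st_X(\mathbf{p},\mathbf{a})) = f(h(\sum p_i\delta_{a_i})) = k(\G f(\sum p_i \delta_{a_i})) = k(\sum p_i \delta_{f(a_i)}) = st_Y(\mathbf{p}, f\circ \mathbf{a})$ shows $f$ is countably affine.

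With $K$ and $L$ in hand the two ``easy'' directions are quick. For $L \circ K = \one_{\SCvx_{\star}}$ I would use that $\epsilon_A$ is countably affine with $\epsilon_A(\delta_a) = a$ (Lemma \ref{epsilonExists}), so the structure induced by $L(KA)$ is $\epsilon_A(\sum p_i \delta_{a_i}) = \sum p_i \epsilon_A(\delta_{a_i}) = \sum p_i a_i$, which is the original structure of $A$; on morphisms both functors act as the identity on underlying functions. The same identity $m(\sum p_i a_i) = \epsilon_B(\G m(\sum p_i \delta_{a_i})) = \sum p_i m(a_i)$ shows simultaneously that $K$ is full, while faithfulness is immediate because $\mS$ does not alter underlying functions. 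Thus $K$ is fully faithful and injective on objects.

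The only remaining --- and genuinely hard --- step is that $K$ is surjective on objects, i.e.\ that for every $\G$-algebra $(X,h)$ the super convex space $A = L(X,h)$ actually lies in $\SCvx_{\star}$ with $\mS A = X$ (as standard measurable spaces) and $\epsilon_A = h$. The main obstacle is the $\sigma$-algebra recovery $\mS(L(X,h)) = X$: one must show that the initial $\sigma$-algebra on the underlying set of $A$ making its countably affine maps into $\cN$ and $\DN$ measurable coincides with $\Sigma_X$, and that the resulting space is standard. I expect to resolve this using the codensity of $\mS'$ (Lemma \ref{mScodense}) together with the finite-intersection property defining standard spaces, which is precisely the ingredient that powered the proof of Lemma \ref{NcoDense}. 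Once $\mS A = X$ is established, the identity $\epsilon_A = h$ follows from the uniqueness clause of Lemma \ref{epsilonExists}: both are $\G$-algebras on $X$ agreeing on every countably supported measure, and the standardness of $X$ forces a measurable map out of $\G X$ that is determined through the algebra laws on such measures to be determined everywhere, so $h$ satisfies the same cone condition that uniquely characterises $\epsilon_A$. Hence $K L = \one_{\M^{\G}}$ and $K$ is an isomorphism.
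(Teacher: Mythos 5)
Your proposal is correct and takes essentially the same route as the paper: your $K$ and $L$ are precisely the paper's comparison functor $\mathcal{K}: A \mapsto (\mS A, \mS\epsilon_A)$ and its inverse $\mathcal{W}: (X,h) \mapsto X_h$, including the identical chain of equalities $f(\sum_i p_i x_i) = f(h(\sum_i p_i \delta_{x_i})) = k(\G f(\sum_i p_i \delta_{x_i})) = \sum_i p_i f(x_i)$ showing that a $\G$-algebra morphism is countably affine for the induced structures. If anything you are more scrupulous than the paper, which simply asserts that $\mathcal{K}$ and $\mathcal{W}$ are mutually inverse: the object-level verifications you single out as the ``hard step'' --- that $L(X,h)$ lies in $\SCvx_{\star}$, that $\mS L(X,h) = X$ as measurable spaces, and that $\epsilon_{L(X,h)} = h$ --- are left entirely unaddressed in the paper's own proof, so your (admittedly prospective) sketch of how to close them via the codensity of $\mS'$ goes beyond, not behind, what the paper provides.
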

\begin{proof}  The comparison functor $\mathcal{K}: \SCvx_{\star} \rightarrow \Std^{\G}$ is specified on objects by $A \mapsto (\mS A, \mS \epsilon_A)$. (In our constructions, we have shown the existence of the barycenter maps as countably affine measurable maps. The use of the notation $\mS \epsilon_A$ is used to distinguish between the measurable function ``$\mS \epsilon_A$'' viewed in $\Std_2$ versus viewing it just as the  countably affine map ``$\epsilon_A$'' in $\SCvx_{\star}$.) The  functor \mbox{$\M^{\G} \stackrel{\mathcal{W}}{\longrightarrow} \SCvx_{\star}$}  is defined on objects by  $\mathcal{W}\big( (X,h) \big) \mapsto X_h$ where $X_h$ is the super convex space consisting of the  underlying set of $X$ with the super convex space structure defined by 
$\sum_{i \in \mathbb{N}} p_i x_i = h( \sum_{i\in \bN} p_i \delta_{x_i})$ is the inverse to the comparison functor $\mathcal{K}$.  If $f: (X,h) \rightarrow (Y,k)$ is a morphism of two $\G$-algebras then, under the super convex space structure specified on the two spaces, $X_h$ and $Y_k$, it follows that
\be \nonumber
\begin{array}{lcl}
f(\sum_{i \in \Nat} p_i x_i ) &=& f\big( h( \sum_{i \in \Nat} p_i \delta_{x_i}) \big) \\
&=& k \big( \G{f}( \sum_{i \in \Nat} p_i \delta_{x_i}) \big) \\
&=& k\big( \sum_{i \in \Nat} p_i \delta_{f(x_i)}\big) \\
&=& \sum_{i \in \Nat} p_i f(x_i)
\end{array}
\ee
and hence $f$ is a countably affine map.  Hence, on arrows, the functor $\mathcal{W}$ is defined by $\mathcal{W}(f)=f$, and it is thus obvious that $\mathcal{W}$ is functorial.
The comparison functor $\mathcal{K}$ and the functor $\mathcal{W}$  specify the isomorphism $\M^{\G} \cong \SCvx_{\star}$.

\end{proof}

 \section{Remarks}  \label{sec:final}  We comment on two  different aspects of this work: (1) Other research directly related to this article, and   (2) The advantage of representing $\Std^{\G}$ as $\SCvx_{\star}$. 
 
  \vspace{.1in}
 
 (1)By using codensity monads, useful qualitative information about the Giry monad was obtained  by  Ruben Van Belle\cite{Belle} and presented 
in  the article \textit{Probability monads as codensity monads}. Belles' research showed that we could limit our focus of attention on \emph{countability}.   
That article shows the Giry monad for $\mathbf{Meas}$ can be viewed as arising from the codensity monad of a functor $\mathbf{G}:\Set_c \rightarrow \mathbf{Meas}$, where $\Set_c$ is the category of countable sets.   It is equivalent to say that $\mathbf{G}$ is the Giry monad restricted to countable measurable spaces with the powerset $\sigma$-algebra.  He employs  the countable-dimensional simplexes in defining the functor, and it is clear that the finite-dimensional simplexes can be viewed as subspaces of $\DN$. Consequently he could have chosen his functor $\mathbf{G}: \Om  \rightarrow \mathbf{Meas}$ where $\Om$ is the category with the one object $\DN$, and with arrows  as he defined in the article, $\Delta_f: \DN \rightarrow \DN$, which is the pushforward map induced by a function $f: \bN \rightarrow \bN$.  While the pushforward maps are countably affine maps, most countably affine maps $\DN \rightarrow \DN$ are not pushforward maps. 

A comparison between the $\G$-algebras on $\mathbf{Std}$ and the $\G$-algebras on complete metric spaces, $\mathbf{CMet}$ with Lipshitz-1 functions (short maps), using the Kantorovich monad illustrate an important distinction.
 The  Kantorovich monad\cite{Kant} on $\mathbf{CMet}$ has algebras that are equivalent to barycenter maps $\G(C) \rightarrow C$ sending $P \mapsto \int_C id_C \, dP$, where  $C$ is a closed convex subset of a Banach space.  Hence the codomain of these algebras are geometric spaces, i.e., embeddable into a vector space.  This issue is the same shortcoming that Doberkat\cite{Doberkat,Doberkat2} recognized in analyzing the algebras for Polish spaces with continuous maps.  Discrete spaces (with at least two points)  have no algebras because the maps are required to be continuous.  Short maps in $\mathbf{CMet}$ force the same continuity requirement, and hence discrete spaces have no algebras.
 
 \vspace{.1in}
 
(2)  The advantage of working with $\SCvx_{\star}$ rather than $\Std^{\G}$ resides in the fact that we can answer questions about existence of algebras, and obtaining knowledge about properties of $\SCvx_{\star}$ is easier that trying to figure out such properties using $\Std^{\G}$.  
Conceptually, an advantage of using super convex spaces is that we can, with a minor adjustment in the definition of a super convex space, employ ``probability amplitudes''.   To do this it is only necessary to define
 \be \nonumber
\G{\bN} = \{  \textrm{all sequences }\mathbf{p}: \bN \rightarrow  \mathbf{D}_2  \, \textrm{ such that }  \lim_{N \rightarrow \infty} \{ \sum_{i=1}^N p_i p_i^{\star} \}= 1 \}
 \ee
where $\mathbf{D}_2 = \{r e^{\imath \theta} \in \mathbb{C} \, | r \in [0,1], \textrm{ and } \theta \in [0,2 \pi) \}$ and $p_i^{\star}$ is the complex conjugate of $p_i$. 
The three axioms of a super convex space are unchanged, and countably affine sums are defined accordingly using probability amplitudes. Thus  the super convex space structure on $\T{X}$ is given by  ``$(\sum_{i\in \bN} p_i P_i)(U) = \sum_{i\in \bN} p_i p_i^{\star} P_i(U)$ for all measurable sets $U$ in $X$'', in other words, whenever we evaluate a probability measure we use the $\ell_2$-norm in evaluating the countable affine sum, 
   which is precisely what is done in quantum mechanics. 
 That same principal can be applied to any countably affine sum.  For example, the super convex space structure of $\bN$ reads as ``$\sum_{i \in \bN} p_i \, i = \min_i \{i \, | \, p_i p_i^{\star} >0 \}$''.      Nowhere in any of our theorems or lemmas do we use any special properties arising from the unit interval.  We only use the property of ``countably affine sums'' which can be defined using either  the $\ell_2$-norm or  the $\ell_1$-norm. (I am speaking loosely here; by the ``$\ell_1$-norm''  I am referring to the conditions $\sum_{i \in \bN} p_i =1$ and $p_i\ge 0$. It is the second condition which allows us to think of the first condition as $\sum_{i \in \bN} |p_i| = 1$.)\footnote{
  If we use the $\ell_2$-norm to define countable affine sums then the one-point compactification of $\mathbb{C}$,  $\mathbb{C}_{\infty}$ (the Rieman Sphere),  is a super convex space, where as $\mathbb{C}$ itself is not a super convex space.}
   
 The big advantage of using probability amplitudes only arises with the use of a dynamic model and measurement model where cancellations can occur which never arise when we restrict ourselves to using the $\ell_1$-norm  in defining a super convex space.  The axioms of a super convex space make no preference on whether we choose the $\ell_1$-norm or the $\ell_2$-norm in defining countable affine sums.
 The importance of the tensor monoidal structure of $\SCvx$ is clear to anyone who is familiar with either quantum mechanics or quantum computation\cite{NiceBook,NiceBook2}.  Under the tensor monoidal structure the no copying rule is just the statement that the function $a \mapsto a \otimes a$ is not permitted because it is not a countably affine map.   Quantum computation is a nondeterministic approach to computation, which makes extensive use of the tensor product (rather than the cartesian product), requiring nondeterministic models which suggest the use of $\G$-algebras.   
  
 In general to model nondeterminism we need barycenter maps which specify the connection between analysis on geometric (continuous) spaces, $\G(Y)$, and  analysis on the ``underlying'' spaces $Y$, which can also be of a combinatorial/discrete nature, e.g.,  $\epsilon_{\twos}:\G(\twos) \rightarrow \twos$.  It is for this reason that the algebras for $\mathbf{Std}$ are important because the algebras, isomorphic to $\SCvx_{\star}$, permit discrete spaces, thereby allowing us to model nondeterministic problems which require discrete spaces for modeling purposes. The $\G$-algebra $\epsilon_{\twos}$ can be interpreted as an ``if then else'' conditional 
which is useful  for modeling automata because given a program input $X \rightarrow \G(Y)$  we can compute the input-output map $X \rightarrow \G(Y) \rightarrow Y$ where the second arrow is a $\G$-algebra.

 \vspace{.05in}

\end{document}